\theoremstyle{plain}
\newtheorem{theorem}{Theorem}[section]
\newtheorem{lemma}[theorem]{Lemma}
\newtheorem{proposition}[theorem]{Proposition}
\newtheorem{definition}{Definition}[section]
\newtheorem{remark}[theorem]{Remark}
\begin{document}

\begin{frontmatter}
\title{McKean-Vlasov SDE and SPDE with Locally Monotone Coefficients}
\runtitle{MVS(P)DEs with Locally Monotone Coefficients}

\begin{aug}
\author[A]{\fnms{Wei}~\snm{Hong}\ead[label=e1]{weihong@jsnu.edu.cn}},
\author[B]{\fnms{Shanshan}~\snm{Hu}\ead[label=e2]{shu@math.uni-bielefeld.de}}
\and
\author[A]{\fnms{Wei}~\snm{Liu}\ead[label=e3]{weiliu@jsnu.edu.cn}}
\address[A]{School of Mathematics and Statistics, Jiangsu Normal University, Xuzhou 221116, China \printead[presep={,\ }]{e1,e3}}

\address[B]{Faculty of Mathematics, Bielefeld University, 33615 Bielefeld, Germany    \printead[presep={,\ }]{e2}}
\end{aug}

\begin{abstract}
In this paper we mainly investigate  the strong and weak well-posedness of a class of McKean-Vlasov stochastic (partial) differential equations. The main existence and uniqueness results state that we only need to impose some local assumptions on the coefficients, i.e. locally monotone condition both in state variable and distribution variable, which cause some essential difficulty since the coefficients of McKean-Vlasov stochastic equations typically are   nonlocal.  Furthermore, the large deviation principle is also derived for the McKean-Vlasov stochastic equations under those weak assumptions. The wide applications of main results are illustrated by various concrete examples such as the granular media equations, plasma type models, kinetic equations, McKean-Vlasov type porous media equations and Navier-Stokes equations. In particular, we could remove or relax some typical assumptions previously imposed on those models.
\end{abstract}

\begin{keyword}[class=MSC]
\kwd[Primary ]{60H10; }
\kwd[Secondary ]{60F10}
\end{keyword}

\begin{keyword}
\kwd{SPDE}
\kwd{distribution dependence}
\kwd{well-posedness}
\kwd{large deviation principle}
\kwd{McKean-Vlasov equation}
\end{keyword}

\end{frontmatter}
\tableofcontents
\section{Introduction}
McKean-Vlasov stochastic differential equations (MVSDEs), also referred as distribution dependent SDEs (DDSDEs) or  mean-field SDEs in the literature, have received a great deal of attention in recent years, which originated from the seminal works \cite{M,V} by McKean and Vlasov who were initially inspired by Kac's programme in the kinetic theory (cf.~\cite{KAC}).
  These kind of models are more involved than classical SDEs and could be interpreted as the weak limit of $N$-interacting particle systems, as $N\to\infty$, in which the particles interact in a mean field way (i.e. the coefficients depend on the empirical measure of the system). The prototype of such stochastic system is in the form of
  $$dX^i(t)=\frac{1}{N}\sum_{j=1}^Nb(X^i(t),X^j(t))dt+dW^i(t),$$
  let $N\to \infty$ and one can get the decoupled SDEs that interacts with the distribution of solution, i.e.
  $$dY^i(t)=\int b(Y^i(t),y)\mathcal{L}_{Y^i(t)}(dy)dt+dW^i(t),$$
 where $\mathcal{L}_{Y^i(t)}$ is the distribution of $Y^i(t)$. Such limiting behaviour is often called the propagation of chaos  in the study of stochastic
dynamics of particle systems, we refer the reader to the classical survey by  Sznitman \cite{S1} and the papers \cite{GJS,HSS1,JW,TLN} for more background on this topic.

\subsection*{Well-posedness in finite dimensions}
In this paper, the first aim is to  consider the strong and weak well-posedness of following general MVSDEs
\begin{equation}\label{eq01}
dX(t)=b(t,X(t),\mathcal{L}_{X(t)})dt+\sigma(t,X(t),\mathcal{L}_{X(t)})dW(t),
\end{equation}
where $W(t)$ is a standard $m$-dimensional Wiener process and the coefficients $b,\sigma$ satisfy the locally monotone condition.  Before mentioning our main results, let us  briefly summarize some recent works on the MVSDEs. Note that, in the classical reference \cite{S1}, Sznitman derived the existence and uniqueness of solutions to (\ref{eq01}) by utilizing a fixed point argument on Wasserstein space while the coefficients $b$ and $\sigma$ are globally Lipschitz continuous. In order to deal with the well-posedness of (\ref{eq01}) where the coefficients satisfy globally monotone  (also called one-side Lipschitz) conditions, the technique of distribution iteration was carried out by Wang in \cite{WFY}, in addition, the exponential ergodicity and some asymptotic estimates were also studied by using the coupling argument and Harnack type inequalities for McKean-Vlasov equations. Afterwards, Huang and Wang further developed the fixed point techniques  used in \cite{S1} and systematically investigated the existence and uniqueness of solutions to (\ref{eq01}) with singular coefficients. For instance, in \cite{HW2} they considered (\ref{eq01}) where the drift term contains a linear growth term in state-distribution and a locally integrable term in time-state, while the noise term is weakly differentiable in state variable and Lipschitz continuous in distribution w.r.t.  the sum of Wasserstein and weighted variation distances, we refer to \cite{HW1,HW3,HRW,REN1,RZ21,WFY1} for the further investigations of MVSDEs with singular coefficients and   \cite{F1,HSS} for the Lyapunov type conditions.

However, to the best of authors' knowledge, there are only few results on the MVSDEs under local assumptions. It seems that the first result considering the local conditions on the coefficients was in \cite{KL} by  Kloeden and Lorenz, where they assumed locally Lipschitz w.r.t. measure and globally Lipschitz w.r.t. state variable. Erny \cite{E1} discussed the strong well-posedness of (\ref{eq01}) with the coefficients $b,\sigma$ fulfilling some locally Lipschitz continuity w.r.t. both the  state and measure variables, whereas he need to impose the uniform boundedness on  the coefficients. Afterwards, Galeati et al. in~\cite{GHM} extended the results of \cite{E1} to the assumption of exponential integrability instead of the uniform boundedness. Recently, Li et al.~\cite{LMSWY} established the strong convergence of Euler-Maruyama schemes for approximating MVSDEs by assuming globally Lipschitz w.r.t. measure and one-side locally Lipschitz type condition w.r.t. state variable with logarithmic growth on Lipschitz constants.

One simple motivating example is the following MVSDE ($p\geq 1$)
\begin{equation}\label{exa10}
dX(t)=-X(t)^3\Bigg[\int_{\mathbb{R}^d}|y|^p\mathcal{L}_{X(t)}(dy)\Bigg] dt+X(t)dW(t).
\end{equation}
It is clear that $-u^3$ satisfies the standard monotone condition (cf.~e.g.~\cite{LR1,WFY1}), however, the drift coefficient of (\ref{exa10}) is no longer monotone.
We will show that it satisfies certain ``locally monotone'' condition (cf.~Remark \ref{remark2.2} (iii) for the details).

 We want to mention that in general  the uniqueness of solutions to (\ref{eq01}) does not hold  if the drift is only locally Lipschitz (see Remark \ref{re1} in Section \ref{sec2.2} for the counterexample). In fact, the classical localization argument seems to be  unsuitable in dealing with the well-posedness of MVSDEs. If we take (\ref{eq01}) and consider the stopped process $Z(t):=X(t\wedge\tau)$ with a stopping time $\tau$, then $Z(t)$ actually solves the following equation
$$Z(t)=Z(0)+\int_0^{t\wedge\tau}b(s,Z(s),\mathcal{L}_{X(s)})d s+\int_0^{t\wedge\tau}\sigma(s,Z(s),\mathcal{L}_{X(s)})d W(s),$$
where $X(t)$ is still involved instead of only $Z(t)$.
Motivated by this,  our first goal  is  the  strong and weak existence and uniqueness of solutions to (\ref{eq01}) with locally monotone coefficients. Specifically, for the existence of (strong) solutions, we mainly assume the coefficients satisfy that for any $t,R\geq0$, $x,y\in\mathbb{R}^d$ with $|x|,|y|\leq R$ and $\mu,\nu\in\mathcal{P}_{\kappa}(\mathbb{R}^d)$,
\begin{align}\label{ass0}
&2\langle b(t,x,\mu)-b(t,y,\nu),x-y\rangle+\|\sigma(t,x,\mu)-\sigma(t,y,\nu)\|^2\nonumber\\
&\leq\big(K_t(R)+C\mu(|\cdot|^{\kappa})+C\nu(|\cdot|^{\kappa})\big)\big(|x-y|^2+\|\mu-\nu\|_{2,var}\big),
\end{align}
where $K_t(R)$ is an $\mathbb{R}_{+}$-valued function and $\|\cdot\|_{2,var}$ stands for the weighted variation distance. Furthermore, under certain additional assumptions (see $(\mathbf{A2}')$-$(\mathbf{A2}''')$ in Theorem \ref{th1}), we also prove the pathwise uniqueness for  (\ref{eq01}) and then derive the desired existence and uniqueness results.
Note that  Ren et al.~\cite{RTW} recently introduced a technique of local approximation, which is applicable to path-distribution dependent stochastic transport type equations. However, such technique is unsuitable in the case of locally monotone conditions (e.g. (\ref{ass0})). To solve this problem here we combine the technique of Euler type approximation with the martingale approach.

Compared to the existing works (e.g.~\cite{E1,GHM,Hu,KL,LMSWY,RTW}), for the existence of strong solutions, we here assume the locally monotone  condition (\ref{ass0}) and utilize the weighted variation distance to measure the distance among distributions, which is more general than that considered in the aforementioned references. In fact,  this result generalizes the classical work by Krylov and Rozovskii \cite{Kr,KR} to the McKean-Vlasov case, the difference compared to \cite{Kr,KR} is that we need to impose some growth condition on the coefficients (see $(\mathbf{A4})$ in Theorem \ref{th1}) instead of the local integrability (cf.~\cite[(3.1)]{LR1}) due to some technical restrictions. For the uniqueness of solutions, as we mentioned before, it does not hold if the drift is merely locally Lipschitz, thus we shall provide different type assumptions (i.e.~$(\mathbf{A2}')$-$(\mathbf{A2}''')$ in Theorem \ref{th1}) to guarantee it, which also helps us to extend the corresponding results to the infinite dimensional case.

Another motivation of considering MVSDEs is that it plays an important role in characterizing nonlinear Fokker-Planck-Kolmogorov (FPK for short) equations (cf.~\cite{BR1,BR2,HSS,S1,WFY,WFY1} and references therein).
  As one application of Theorem \ref{th1}, we could investigate the following granular media equations
\begin{eqnarray}\label{eq06}
\partial_tf_t=\Delta f_t+{\rm{div}}\Big\{f_t\nabla V+f_t\nabla(W\ast f_t)\Big\},
\end{eqnarray}
where the potential $V:\mathbb{R}^d \rightarrow \mathbb{R}$, the interaction functional
$W:\mathbb{R}^d \rightarrow \mathbb{R}$ and
\begin{equation*}
(W\ast f_t)(x):=\int_{\mathbb{R}^d}W(x-z)f_t(z)dz.
\end{equation*}
Such type of model arises from a conglomeration of discrete solid macroscopic particles (or grains) which are characterized by a loss of kinetic energy whenever the particles
interact, the most common loss is due to friction when the grains collide, see \cite{DSG} for more background from physics. Typically, the convexity of potential terms is
assumed in the literature, see e.g.~\cite{AG1,BCP} and references therein. Based on the main result of this work, we can drop the assumption of convexity and merely use locally Lipschitz condition with certain growth to guarantee the existence of weak solutions of PDE (\ref{eq06}).
In \cite{WFY1}, Wang also discussed granular media equations (\ref{eq06}) with Neumann boundary  under the assumptions of weak differentiability with $\|\nabla W\|_{\infty}<\infty$ and integrability $|\nabla V|\in L^p$ for some $p>d\vee2$. In this work we can allow the terms $\nabla W$ to be unbounded.

For more applications of MVSDEs such as plasma type models and kinetic equations, one can see Section \ref{sec6} below.

\subsection*{Well-posedness in infinite dimensions}
Based on the results in finite dimensions, our second goal is to establish the existence and uniqueness of solutions to a class of McKean-Vlasov  SPDEs (MVSPDEs) on Hilbert space
\begin{equation}\label{eq03}
dX(t)=A(t,X(t),\mathcal{L}_{X(t)})dt+B(t,X(t),\mathcal{L}_{X(t)})dW(t),
\end{equation}
where $W(t)$ stands for a Hilbert-space-valued cylindrical Wiener process and $A,B$ satisfy some locally monotone conditions, which  extend not only the classical result by Krylov and Rozovskii \cite{KR} but also the further  generalization by Liu and R\"{o}ckner \cite{LR2,LR13}.
The classical variational framework was first developed  by Pardoux, Krylov and Rozovskii (see \cite{KR, RS}), where they employed the well-known monotonicity method to prove the existence and uniqueness of solutions to SPDEs satisfying the monotonicity and coercivity conditions.
This classical framework has been substantially
generalized in \cite{LR2,LR13,LR1} to more general SPDEs satisfying local monotonicity and generalized coercivity conditions, which are applicable to various quasilinear and semilinear SPDEs.

As the second main contribution of this work,  we develop a  general  framework also include the  McKean-Vlasov  case (see Theorem \ref{th2}), in particular, we assume the coefficients of (\ref{eq03}) satisfy
\begin{eqnarray}\label{es0}
\!\!\!\!\!\!\!\!&&2_{V^*}\langle A(t,u,\mu)-A(t,v,\nu),u-v\rangle_V+\|B(t,u,\mu)-B(t,v,\nu)\|_{L_2(U,H)}^2
\nonumber\\
\!\!\!\!\!\!\!\!&&\leq
\big(C+\rho(v)+C\nu(\|\cdot\|_H^{\theta})\big)\|u-v\|_H^2
+C\big(1+\nu(\|\cdot\|_H^{\theta})\big)\mathbb{W}_{2,H}(\mu,\nu)^2,
\end{eqnarray}
where  $\rho$ is a nonnegative measurable and locally bounded function. The main result is applicable to
 various types of SPDEs perturbed by interaction external forces, such as  stochastic porous media equations and  stochastic 2D Navier-Stokes equations, which could be seen as the mean field limit of $N$-interacting SPDE systems (see Remark \ref{remark6.2}).

Notice that  MVSPDEs have also been investigated by many researchers. For instance, Shen et al.~\cite{SSZZ} recently investigated the large $N$ problem in the quantum field theory of $O(N)$ linear sigma model on $2$-dimensional torus $\mathbb{T}^2$. They considered the $N\to\infty$ limit  and proved that a singular MVSPDE governs the limiting dynamics. In addition, this large $N$ limit problem could also  be described as a typical mean field limit result in the theory of SPDE systems. Chiang et al.~\cite{CKS} proved the existence and uniqueness of solutions for a class of McKean-Vlasov semilinear SPDEs, and discussed the asymptotic behavior of the sequence of empirical measures determined by the solutions of interacting system of $n$-SDEs taking
values in the dual of a countably Hilbertian nuclear space, see also \cite{BKK} for the extension to a general Hilbert space.
 As the applications, the results in \cite{CKS} can be used to describe the random strings and the fluctuation of voltage potentials of interacting spatially extended neurons, which are governed by the following weakly interacting SPDE systems
 $$dX^{N,i}(t)=\Big(\Delta X^{N,i}(t)+\frac{1}{N}\sum_{j=1}^Nb_t(X^{N,i}(t),X^{N,j}(t))\Big)dt+dW^i(t),~1\leq i\leq N.$$
The latter is a more realistic model for large numbers of neurons in close proximity to each other. In  the paper \cite{ES1}, E and Shen showed the well-posedness and  propagation of chaos results for the following MVSPDE
 \begin{equation*}
   dX(t)=\big[\Delta X(t)+\Phi(t,X(t),\mathcal{L}_{X(t)})\big]dt+B(t,X(t),\mathcal{L}_{X(t)})dW(t),
   \end{equation*}
 with $\Phi,B$ satisfying globally Lipschitz condition, which  justifies the mean field approximation when the polymer system is concentrated. For more physical point of view for this type of models, one can see~\cite{O1}.
 One should note that our solution theory for MVSPDEs (\ref{eq03}) here could be directly applied to obtain the well-posedness for those MVSPDE models derived in \cite{CKS,BKK,ES1}, moreover, instead of assuming Lipschitz conditions on the coefficients in \cite{CKS,BKK,ES1},  one only need to require locally monotone condition by using our current result, see Subsection \ref{secin} for more details.
\subsection*{Large deviation principle}
Apart from the existence and uniqueness of solutions, we are also interested in investigating some asymptotic properties, for example, the Freidlin-Wentzell type large deviation principle (LDP for short) for MVSPDEs with small perturbations,
\begin{equation}\label{eq04}
dX^\varepsilon(t)=A(t,X^\varepsilon(t),\mathcal{L}_{X^\varepsilon(t)})dt+\sqrt{\varepsilon}B(t,X^\varepsilon(t),\mathcal{L}_{X^\varepsilon(t)})dW(t),~X^\varepsilon(0)=x.
\end{equation}
The LDP, as one  important topic in probability theory, mainly concerns the asymptotic behaviour of remote tails of a family of probability distributions, which has wide applications in several fields such as  statistics and information theory.
 The reader can refer to the classical monographs \cite{DZ,V1}  for the LDP theory and its applications. After the seminal work by  Freidlin and Wentzell \cite{FW},  the Freidlin-Wentzell type LDP for SDEs or SPDEs have been extensively studied in the past decades.  In particular,  the weak convergence approach developed by Budhiraja et al.~\cite{BD}
 has been proved to be very effective to study the Freidlin-Wentzell's  LDP for SPDEs, we refer the  readers  to e.g.~\cite{BDM,BPZ,CM,DE,L1,MSZ} and references therein.

However, there is much less results on LDP for McKean-Vlasov SDEs or SPDEs in the literature. Very recently, dos Reis et al.~\cite{DST} investigated the LDP for MVSDEs (\ref{eq01}) in the sense of uniform and H\"{o}lder topologies, where the coefficients satisfy the locally Lipschitz continuity but global monotonicity w.r.t.~the state variable and globally Lipschitz continuity w.r.t.~the measure (see \cite[Assumption 3.2]{DST}), by applying the argument of classical time discretization and exponential equivalence. Afterwards,  Adams et al.~\cite{ADRST} extended the results of \cite{DST} to the case of reflected MVSDEs with self-stabilising terms. By employing the weak convergence approach, Liu et al.~\cite{LSZZ}   investigated the LDP and moderate deviation principle for  MVSDEs under a similar framework as \cite{DST} but with L\'{e}vy noise, see also \cite{HLL3} for the extension to the infinite dimensional case. Inspired by \cite{ADRST,DST,HLL3,LSZZ,SY}, one natural question is whether the Freidlin-Wentzell type LDP holds for McKean-Vlasov equations under fully local assumptions or not?

Therefore, the third goal of this work is to establish the LDP for (\ref{eq04}) with locally monotone coefficients. The main strategy is based on the powerful weak convergence approach.  As an essential part of the proof, it is necessary to find the correct form of the skeleton equation.
Heuristically, if $\varepsilon\rightarrow0$ in equation (\ref{eq04}), it reduces to the following PDE
\begin{equation*}
\frac{dX^0(t)}{dt}=A(t,X^0(t),\mathcal{L}_{X^0(t)}),~X^0(0)=x.
\end{equation*}
Now we define the following skeleton equation
\begin{equation}\label{eq05}
\frac{d X^{\phi}(t)}{dt}=A(t,X^{\phi}(t),\mathcal{L}_{X^{0}(t)})+B(t,X^{\phi}(t),\mathcal{L}_{X^{0}(t)})\phi(t),~X^{\phi}(0)=x,
\end{equation}
where $\phi$ belongs to the space of square integrable functions.  We remark that $\mathcal{L}_{X^0(t)}$ (rather than  $\mathcal{L}_{X^{\phi}(t)}$) appears in the coefficients of (\ref{eq05}) and will be utilized to construct the rate function of LDP, which is crucial in the McKean-Vlasov case  and coincides also with the intuition since the distribution $\mathcal{L}_{X^{\varepsilon}(t)}$ of $X^{\varepsilon}(t)$ is not random so that the corresponding convergence is independent of the occurrence of a rare event w.r.t.~the random variable $X^{\varepsilon}(t)$. It should be pointed out that because the classical localization argument is unsuitable in the McKean-Vlasov case, some new and nontrivial analysis are needed here compared to the previous works on LDP for classical SPDEs (cf.~e.g.~\cite{L1,LTZ,RZ2,XZ}) and also for  MVS(P)DEs (cf.~\cite{ADRST,DST,HLL3,LSZZ,SY}).
As a consequence, we can directly obtain  the LDP for a large class of MVSPDE models.

The rest of manuscript is organized as follows. In Section \ref{sec3}, we study the strong and weak existence and uniqueness of solutions to MVSDEs (\ref{eq01}). In Section \ref{sec4}, we devote to investigating the existence and uniqueness of variational solutions to MVSPDEs (\ref{eq03}).
In Section \ref{sec5}, we establish the Freidlin-Wentzell type  LDP for MVSPDEs within the framework of Section \ref{sec4}.  In Section \ref{sec6} and \ref{example1} we  present many concrete examples to illustrate the wide applicability of our main results.  And we recall some lemmas in the Appendix for the reader's convenience.

Throughout this paper $C_{p}$  denotes a positive constant which may changes from line to line, where the subscript $p$ is used to emphasize that the constant depends on certain parameter.

\section{Well-posedness of McKean-Vlasov SDEs}\label{sec3}
 We first introduce some notations.
Let $|\cdot|$ and $\langle\cdot,\cdot\rangle$ be the Euclidean vector norm and scalar product, respectively. Let $\mathcal{P}(\mathbb{R}^d)$ represent the space of all probability measures on $\mathbb{R}^d$ equipped with the weak topology. Furthermore, for $p>0$, we set
$$\mathcal{P}_p(\mathbb{R}^d):=\Big\{\mu\in\mathcal{P}(\mathbb{R}^d):\mu(|\cdot|^p):=\int_{\mathbb{R}^d}|x|^p\mu(dx)<\infty\Big\}.$$
Then $\mathcal{P}_p(\mathbb{R}^d)$ is a Polish space under the $L^p$-Wasserstein distance
$$\mathbb{W}_{p}(\mu,\nu):=\inf_{\pi\in\mathfrak{C}(\mu,\nu)}\Big(\int_{\mathbb{R}^d\times \mathbb{R}^d}|x-y|^p\pi(dx,dy)\Big)^{\frac{1}{p\vee 1}},~\mu,\nu\in\mathcal{P}_p(\mathbb{R}^d),$$
where $\mathfrak{C}(\mu,\nu)$ stands for the set of all couplings for  $\mu$ and $\nu$. We also introduce a weighted variation norm,
$$\|\mu-\nu\|_{2,var}:=\sup\limits_{f\leq 1+|\cdot|^2}|\mu(f)-\nu(f)|,\;\;\;\mu,\nu\in\mathcal{P}_2(\mathbb{R}^d).$$

For any $T>0$,  let $\mathcal{C}_{T}:=C([0,T];\mathbb{R}^d)$ be the  space of all continuous function from $[0,T]$ to $\mathbb{R}^d$ equipped with the uniform norm,  i.e.
$$\|\xi\|_{T}:=\sup_{t\in[0,T]}|\xi(t)|.$$
Analogously, we define by $\mathcal{P}_T$ the space of all probability measures on $\mathcal{C}_{T}$ with the weak topology. Then
\begin{equation}\label{P1}
\mathcal{P}_{p,T}:=\Big\{\mu\in\mathcal{P}_T:\int_{\mathcal{C}_{T}}\|\xi\|_{T}^p\mu(d\xi)<\infty\Big\}
\end{equation}
is a Polish probability space under associated $L^p$-Wasserstein distance
\begin{equation}\label{P2}
\mathbb{W}_{p,T}(\mu,\nu):=\inf_{\pi\in\mathfrak{C}(\mu,\nu)}\Big(\int_{\mathcal{C}_{T}\times \mathcal{C}_{T}}\|\xi-\eta\|_{T}^p\pi(d\xi,d\eta)\Big)^{\frac{1}{p\vee1}},~~\mu,\nu\in\mathcal{P}_{p,T}.
\end{equation}
For any $t\in[0,T],~R>0,~\xi\in \mathcal{C}_{T}$, we define $\xi_t:[0,T]\to\mathbb{R}^d$ by
$$\xi_t(s):=\xi(t\wedge s),~s\in[0,T],$$
and map $\pi_t(\xi):=\xi_t$. Then the marginal distribution before time $t$ of a probability measure $\mu\in\mathcal{P}_T$ is denoted by
$$\mu_t:=\mu\circ \pi_t^{-1}.$$
Define
\begin{equation*}
\tau_R^\xi:=\inf\Big\{t\geq 0:|\xi(t)|\geq R\Big\}.
\end{equation*}
Then for any $\mu,\nu\in\mathcal{P}_{2,T}$, we can define the following ``local'' $L^2$-Wasserstein distance
\begin{equation*}
\mathbb{W}_{2,T,R}(\mu,\nu):=\inf_{\pi\in\mathfrak{C}(\mu,\nu)}\Big(\int_{\mathcal{C}_{T}\times \mathcal{C}_{T}}\|\xi_{T\wedge\tau_R^\xi\wedge\tau_R^{\eta}}-\eta_{T\wedge\tau_R^\xi\wedge\tau_R^{\eta}}\|_{T}^2\pi(d\xi,d\eta)\Big)^{\frac{1}{2}}.
\end{equation*}

\subsection{Main results}\label{sec2.2}
Let us consider the following MVSDE
\begin{equation}\label{eq1}
dX(t)=b(t,X(t),\mathcal{L}_{X(t)})dt+\sigma(t,X(t),\mathcal{L}_{X(t)})dW(t),
\end{equation}
where $W(t)$ is a standard $m$-dimensional Wiener process defined on the complete probability space $(\Omega,\mathcal{F},\{\mathcal{F}(t)\}_{t\geq 0},\mathbb{P})$ with the filtration $\{\mathcal{F}(t)\}_{t\geq 0}$ satisfying  the usual condition, and $$b:[0,\infty)\times\mathbb{R}^d\times\mathcal{P}(\mathbb{R}^d)\to \mathbb{R}^d,~ \sigma:[0,\infty)\times\mathbb{R}^d\times\mathcal{P}(\mathbb{R}^d)\to \mathbb{R}^d\otimes \mathbb{R}^m$$
are measurable maps.

\begin{definition}\label{de1}
(i)  An adapted continuous process on $\mathbb{R}^d$ is called a strong solution of (\ref{eq1}), if
$$\mathbb{E}\int_0^t\big\{|b(s,X(s),\mathcal{L}_{X(s)})|+\|\sigma(s,X(s),\mathcal{L}_{X(s)})\|^2\big\}ds<\infty,~t\geq 0,$$
and $\mathbb{P}$-a.s.
$$X(t)=X(0)+\int_0^tb(s,X(s),\mathcal{L}_{X(s)})ds+\int_0^t\sigma(s,X(s),\mathcal{L}_{X(s)})dW(s),~t\geq 0.$$

(ii) A pair $(\tilde{X}(t),\tilde{W}(t))_{t\geq 0}$ is called a weak solution to (\ref{eq1}), if there exists an $\mathbb{R}^m$-valued Wiener process $\tilde{W}(t)$ under the stochastic basis $(\tilde{\Omega},\tilde{\mathcal{F}},\{\tilde{\mathcal{F}}(t)\}_{t\geq0},\tilde{\mathbb{P}})$ such that $(\tilde{X}(t),\tilde{W}(t))$ solves (\ref{eq1}).~We say (\ref{eq1}) is weakly unique, if $(\tilde{X}(t),\tilde{W}(t))$ under the stochastic basis $(\tilde{\Omega},\tilde{\mathcal{F}},\{\tilde{\mathcal{F}}(t)\}_{t\geq0},\tilde{\mathbb{P}})$ and $(\bar{X}(t),\bar{W}(t))$ under $(\bar{\Omega},\bar{\mathcal{F}},\{\bar{\mathcal{F}}(t)\}_{t\geq0},\bar{\mathbb{P}})$ are two weak solutions to (\ref{eq1}), then $\mathcal{L}_{\tilde{X}(0)}|_{\tilde{\mathbb{P}}}=\mathcal{L}_{\bar{X}(0)}|_{\bar{\mathbb{P}}}$ implies that $\mathcal{L}_{\tilde{X}(t)}|_{\tilde{\mathbb{P}}}=\mathcal{L}_{\bar{X}(t)}|_{\bar{\mathbb{P}}}$.

\end{definition}

Assume that there exists $\kappa\geq 2$ such that the following conditions hold.

\begin{enumerate}

\item [$({\mathbf{A}}{\mathbf{1}})$]\label{A1}$($Continuity$)$ For $t\geq0$, $b(t,\cdot,\cdot),\sigma(t,\cdot,\cdot)$ is continuous on $\mathbb{R}^d\times \mathcal{P}_2(\mathbb{R}^d)$.

\item [$({\mathbf{A}}{\mathbf{2}})$]\label{A2}$($Local Weak Monotonicity$)$ There exists a constant $C>0$ such that for any $t\geq0$, $R>0$, $|x|\vee|y|\leq R$ and $\mu,\nu\in\mathcal{P}_{\kappa}(\mathbb{R}^d)$,
\begin{align*}
&2\langle b(t,x,\mu)-b(t,y,\nu),x-y\rangle+\|\sigma(t,x,\mu)-\sigma(t,y,\nu)\|^2\nonumber\\
\leq&\big(K_t(R)+C\mu(|\cdot|^{\kappa})+C\nu(|\cdot|^{\kappa})\big)\big(|x-y|^2+\|\mu-\nu\|_{2,var}\big),
\end{align*}
where $ K:[0,\infty)\times [0,\infty)\rightarrow \mathbb{R}_{+}$ satisfying for all $T,R\geq0$,
\begin{equation*}
\int_0^TK_t(R)dt<\infty.
\end{equation*}

\item [$({\mathbf{A}}{\mathbf{3}})$]\label{A3}$($Weak Coercivity$)$ For any $t\geq0$, $x\in\mathbb{R}^d$ and $\mu\in\mathcal{P}_2(\mathbb{R}^d)$,
\begin{equation}\label{a3}
\langle b(t,x,\mu),x\rangle\leq K_t(1)\big(1+|x|^2+\mu(|\cdot|^2)\big),
\end{equation}
where $K_t(1)$ is defined as in $(\mathbf{A2})$.

\item [$({\mathbf{A}}{\mathbf{4}})$]\label{A4}$($Growth$)$ For any $t\geq0$,  $x\in\mathbb{R}^d$ and $\mu\in\mathcal{P}_\kappa(\mathbb{R}^d)$,
\begin{eqnarray}
\!\!\!\!\!\!\!\!&&|b(t,x,\mu)|^2\leq K_t(1)\big(1+|x|^{\kappa}+\mu(|\cdot|^\kappa)\big),
\nonumber\\
\!\!\!\!\!\!\!\!&&
\|\sigma(t,x,\mu)\|^2\leq K_t(1)\big(1+|x|^2+\mu(|\cdot|^2)\big).\label{gro}
\end{eqnarray}
\end{enumerate}

The main result of this part is stated as follows.
\begin{theorem}\label{th1}
Assume that $(\mathbf{A1})$-$(\mathbf{A4})$ hold with
\begin{equation}\label{esK}
\sup_{t\in[0,T]}K_{t}(1)<\infty.
\end{equation}
Then for any $X(0)\in L^r(\Omega;\mathbb{R}^d)$ with $r>\kappa$,  MVSDE (\ref{eq1}) has a strong solution in the sense of Definition \ref{de1}. Moreover,
\begin{equation}\label{es2}
\mathbb{E}\Big[\sup_{t\in[0,T]}|X(t)|^{r}\Big]<\infty.
\end{equation}
Furthermore, if one of the following conditions holds:

\vspace{1mm}
\begin{enumerate}

\item [$(\mathbf{A2}')$]\label{A5}  There exist $C_0>0,\varepsilon>0$ such that for any $T,R>0$, $\xi,\eta\in \mathcal{C}_T$, $\mu,\nu\in \mathcal{P}_{2,T}$  and $t\in[0,T\wedge\tau_R^{\xi}\wedge\tau_R^{\eta}]$,
\begin{eqnarray*}
\!\!\!\!\!\!\!\!&&\langle b(t,\xi(t),\mu(t))-b(t,\eta(t),\nu(t)),\xi(t)-\eta(t)\rangle\nonumber\\
\leq~~&&\!\!\!\!\!\!\!\!
 C_R\Big\{|\xi(t)-\eta(t)|^2+\mathbb{W}_{2,T,R}(\mu_t,\nu_t)^2+C_0{\rm{e}}^{-\varepsilon C_R}(1\wedge\mathbb{W}_{2}(\mu(t),\nu(t))^2)\Big\},
\end{eqnarray*}
and
\begin{eqnarray*}
\!\!\!\!\!\!\!\!&&\|\sigma(t,\xi(t),\mu(t))-\sigma(t,\eta(t),\nu(t))\|^2
\nonumber\\
\leq~~&&\!\!\!\!\!\!\!\!
 C_R\Big\{|\xi(t)-\eta(t)|^2+\mathbb{W}_{2,T,R}(\mu_t,\nu_t)^2+C_0{\rm{e}}^{-\varepsilon C_R}(1\wedge\mathbb{W}_{2}(\mu(t),\nu(t))^2)\Big\}.
\end{eqnarray*}

\item [$(\mathbf{A2}'')$]\label{A6} There exists $C>0$ such that for any $t\geq0$, $R>0$, $|x|\vee|y|\leq R$ and $\mu,\nu\in\mathcal{P}_{\kappa}(\mathbb{R}^d)$ with $\kappa$ defined in $(\mathbf{A2})$,
\begin{align}\label{a4}
&2\langle b(t,x,\mu)-b(t,y,\nu),x-y\rangle+\|\sigma(t,x,\mu)-\sigma(t,y,\nu)\|^2\nonumber\\
\leq&\big(K_t(R)+C\mu(|\cdot|^{\kappa})+C\nu(|\cdot|^{\kappa})\big)\big(|x-y|^2+\mathbb{W}_2(\mu,\nu)^2\big).
\end{align}
Moreover, there exists a non-decreasing function $f:\mathbb{R}\to\mathbb{R}$  such that  for any  $\mathbb{E} {\rm{e}}^{f(|X(0)|)}<\infty$,
\begin{eqnarray}
\!\!\!\!\!\!\!\!&&\sup_{t\in[0,T]}\mathbb{E} {\rm{e}}^{f(|X(t)|)}<\infty,\label{eqe1}
\\
\!\!\!\!\!\!\!\!&&\lim\limits_{R\to \infty}\Big(f(R)-\int_0^TK_t(R)dt\Big)=\infty.\label{eqe2}
\end{eqnarray}

\item [$(\mathbf{A2}''')$]\label{A7} There exists $C>0$ such that for any $t\geq 0$, $\mu,\nu\in\mathcal{P}_{\kappa}(\mathbb{R}^d)$ and any coupling $\pi\in\mathfrak{C}(\mu,\nu)$,
\begin{eqnarray*}
\!\!\!\!\!\!\!\!&&\int_{\mathbb{R}^d\times\mathbb{R}^d}2\langle b(t,x,\mu)-b(t,y,\nu),x-y\rangle+\|\sigma(t,x,\mu)-\sigma(t,y,\nu)\|^2 \pi(dx,dy)
\nonumber\\
\leq~~\!\!\!\!\!\!\!\!&&
 C\int_{\mathbb{R}^d\times\mathbb{R}^d}\big(1+\mu(|\cdot|^{\kappa})+\nu(|\cdot|^{\kappa})\big)|x-y|^2\pi(dx,dy).
\end{eqnarray*}
\end{enumerate}
Then MVSDE (\ref{eq1}) has a unique strong/weak solution in the sense of Definition \ref{de1} provided satisfying (\ref{es2}).
\end{theorem}
\begin{remark}\label{re1}
It should be pointed out that in order to guarantee the uniqueness of solutions to (\ref{eq1}),  some additional assumption is necessary. In fact, we consider the following MVSDEs
\begin{equation}\label{eq53}
X(t)=X(0)+\int_{0}^{t}B(X(s),\mathbb{E}[\bar{b}(X(s))])ds+\int_{0}^{t}\Sigma(X(s),\mathbb{E}[\bar{\sigma}(X(s))])dW(s).
\end{equation}
Scheutzow \cite{SCHEUTZOW} has proved that when $\Sigma=0$ and either of functions $B$ or $\bar{b}$ is merely locally Lipschitz continuous, the uniqueness of solutions to (\ref{eq53}), in general, does not hold. Therefore, it is reasonable  to impose some extra structure (e.g.~$(\mathbf{A2}')$-$(\mathbf{A2}''')$) on the coefficients to obtain a unique solution.
\end{remark}
\begin{remark}\label{remark2.2}
We shall give some comments for the above assumptions.

\vspace{1mm}
(i) Note that by \cite[Theorem 6.15]{villani}, there exists a constant $c > 0$ such that for $p>0$,
$$ \|\mu-\nu\|_{var}+\mathbb{W}_p(\mu,\nu)^{1\vee p}\leq c\|\mu-\nu\|_{p,var},\;\;\;\mu,\nu\in \mathcal{P}_{p}(\mathbb{R}^d).$$
Therefore, the condition $(\mathbf{A2}'')$  is stronger than $(\mathbf{A2})$, whereas the conditions $(\mathbf{A2}')$ and $(\mathbf{A2}''')$ are not comparable  to $(\mathbf{A2})$.

(ii) Unlike the classical SDE case, the main challenge in applying local assumptions to the McKean-Vlasov equations is that the measure dependence is inherently non-local. In  {$(\mathbf{A2}')$},  we show that the dependence on the distribution of the coefficients is asymptotically determined by the distribution of the path. On the other hand, we exploit sufficient moment control in {$(\mathbf{A2}'')$} which is also a natural remedy. As for {$(\mathbf{A2}''')$}, we utilize the integrated condition which is inspired by \cite{HSS}.

(iii) As stated in the introduction, it is easy to check that the following example ($p\geq 1$)
\begin{equation}\label{example1.1}
dX(t)=-X(t)^3\Bigg[\int_{\mathbb{R}^d}|y|^p\mathcal{L}_{X(t)}(dy)\Bigg] dt+ X(t)dW(t)
\end{equation}
 satisfies $(\mathbf{A1})$-$(\mathbf{A4})$ and $(\mathbf{A2}''')$.
  Therefore, by Theorem \ref{th1}, (\ref{example1.1}) admits a unique strong/weak solution. For more concrete examples or applications about the conditions above, one can see Remarks \ref{re2}-\ref{re3} in Section \ref{sec6}.

(iv) In contrast to the existing results (cf.~e.g.~\cite{E1,GHM,Hu,RTW}),  we here assume the locally monotone conditions rather than locally Lipschitz (see for example, \cite[Proposition 3.29]{GHM} or \cite[Proposition 2.1]{RTW}), which extend the classical result by Krylov and Rozovskii \cite{KR} to the McKean-Vlasov case and then help us to establish the corresponding result also in infinite dimensional case, see Theorem \ref{th2} below for details.
\end{remark}

In the sequel,  we first construct the Euler type approximation and prove that these approximating equations are well-posed and the distributions of approximating solutions $\mathcal{L}_{X^{(n)}}$  is  tight, which allows us to find a limit of subsequence of $\mathcal{L}_{X^{(n)}}$ and implies the existence of strong solution by taking advantage of the modified Yamada-Watanabe theorem (see Lemma \ref{lem2}).
Combining this with the pathwise uniqueness of (\ref{eq1}), we can get the strong (weak) well-posedness.

\subsection{Construction of approximating equations}
For any $T\geq0$ and integer $n\geq1$, let $T_n=\frac{T}{n}$, $t_{k}^n=kT_n$ for $k=0,1,\ldots,n$. For any $t\in[0,t_{1}^n]$, let us consider the following SDE
\begin{equation}\label{eq2}
dX^{(n)}(t)=b(t,X^{(n)}(t),\mu^{(n)}(0))dt+\sigma(t,X^{(n)}(t),\mu^{(n)}(0))dW(t),~X^{(n)}(0)=X(0),
\end{equation}
where $\mu^{(n)}(0):=\mathcal{L}_{X^{(n)}(0)}$.

Note that (\ref{eq2}) is a classical SDE (not distribution dependent), thus in view of the conditions $(\mathbf{A1})$-$(\mathbf{A4})$, we see the coefficients in (\ref{eq2}) satisfy the conditions of \cite[Theorem 3.1.1]{LR1}, thus (\ref{eq2}) admits a unique solution on the time interval $[0,t_{1}^n]$. Moreover, by  $(\mathbf{A3})$  we can show that for any $r>\kappa$,
\begin{equation*}
\mathbb{E}\Big[\sup_{t\in[0,t_{1}^n]}|X^{(n)}(t)|^{r}\Big]\leq C_{r,T}(1+\mathbb{E}|X(0)|^{r}),
\end{equation*}
whose proof is  similar to that of Lemma \ref{pro1} below, we omit the details here.

Inductively, for any $k=0,1,\ldots,n-1$ and $t\in(t_{k}^n,t_{k+1}^n]$, we consider
\begin{equation}\label{eq4}
dX^{(n)}(t)=b(t,X^{(n)}(t),\mu^{(n)}(t_{k}^n))dt+\sigma(t,X^{(n)}(t),\mu^{(n)}(t_{k}^n))dW(t)
\end{equation}
with initial value $X^{(n)}(t_{k}^n)$, where $\mu^{(n)}(t_{k}^n):=\mathcal{L}_{X^{(n)}(t_{k}^n)}$. Analogously, (\ref{eq4}) admits a unique solution with the estimate
\begin{equation*}
\mathbb{E}\Big[\sup_{t\in[t_{k}^n,t_{k+1}^n]}|X^{(n)}(t)|^{r}\Big]\leq C_r(1+\mathbb{E}|X^{(n)}(t_{k}^n)|^{r}).
\end{equation*}
Let $\chi_n(t):=t^n_k$ for any $t\in( t^n_k,t^n_{k+1}]$, $k=0,1,\ldots,n-1$.  Then for any $t\in[0,T]$, we introduce the following approximating equation
\begin{equation}\label{eqapp}
dX^{(n)}(t)=b(t,X^{(n)}(t),\mu^{(n)}(\chi_n(t)))dt+\sigma(t,X^{(n)}(t),\mu^{(n)}(\chi_n(t)))dW(t),
\end{equation}
with initial value $X^{(n)}(0)=X(0)$,
which has a unique solution satisfying the following preliminary estimate
\begin{equation}\label{es5}
\mathbb{E}\Big[\sup_{t\in[0,T]}|X^{(n)}(t)|^{r}\Big]\leq\sum_{k=0}^{n-1}\mathbb{E}\Big[\sup_{t\in[t^n_k,t^n_{k+1}]}|X^{(n)}(t)|^{r}\Big]\leq C(n)<\infty.
\end{equation}

The following two lemmas illustrate the uniform estimate and time H\"{o}lder continuity of the approximating solution $X^{(n)}(t)$ to (\ref{eqapp}), which play a significant role in proving the tightness of $X^{(n)}(t)$.
\begin{lemma}\label{pro1}
Assume that the conditions $(\mathbf{A1})$-$(\mathbf{A4})$  hold. For any $T>0$, $X(0)\in L^r(\Omega;\mathbb{R}^d)$ with $r>\kappa$,  there is $C_{r,T}>0$  such that
\begin{equation}\label{es6}
\sup_{n\geq 1}\mathbb{E}\Big[\sup_{t\in[0,T]}|X^{(n)}(t)|^r\Big]\leq C_{r,T}.
\end{equation}
\end{lemma}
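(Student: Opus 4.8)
The plan is to derive the uniform $L^r$ moment bound \eqref{es6} by a Gronwall argument applied to the approximating equation \eqref{eqapp}, using the weak coercivity condition $(\mathbf{A4})$ (the growth condition on $b$ and $\sigma$) together with the a priori finiteness \eqref{es5} to justify all manipulations, and crucially tracking the dependence on the frozen measure $\mu^{(n)}(\chi_n(t))=\mathcal{L}_{X^{(n)}(\chi_n(t))}$. First I would apply It\^o's formula to $|X^{(n)}(t)|^r$ (or, to keep integrability under control, to $(1+|X^{(n)}(t)|^2)^{r/2}$), yielding
\begin{align*}
(1+|X^{(n)}(t)|^2)^{r/2}\leq{}&(1+|X(0)|^2)^{r/2}
+C_r\int_0^t(1+|X^{(n)}(s)|^2)^{r/2-1}\Big(2\langle b(\cdot),X^{(n)}(s)\rangle+\|\sigma(\cdot)\|^2\Big)ds\\
&+C_r\int_0^t(1+|X^{(n)}(s)|^2)^{r/2-1}\|\sigma(\cdot)\|^2\,ds
+M^{(n)}(t),
\end{align*}
where $M^{(n)}(t)$ is the local martingale coming from the stochastic integral and the coefficients are evaluated at $(s,X^{(n)}(s),\mu^{(n)}(\chi_n(s)))$. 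By $(\mathbf{A4})$, applied with the frozen measure (note $r>\kappa$ so $\mu^{(n)}(\chi_n(s))\in\mathcal{P}_\kappa$ by \eqref{es5}), both $2\langle b,X^{(n)}(s)\rangle\leq |b|^2+|X^{(n)}(s)|^2$ and $\|\sigma\|^2$ are bounded by $K_s(1)\big(1+|X^{(n)}(s)|^\kappa+\mu^{(n)}(\chi_n(s))(|\cdot|^\kappa)\big)$ up to constants; since $\kappa\le r$, we can absorb $|X^{(n)}(s)|^\kappa\le 1+|X^{(n)}(s)|^r$.

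The main point, and the step I expect to be the principal obstacle, is handling the nonlocal term $\mu^{(n)}(\chi_n(s))(|\cdot|^\kappa)=\mathbb{E}|X^{(n)}(\chi_n(s))|^\kappa$. After taking expectations and using the Burkholder–Davis–Gundy inequality on $\mathbb{E}\sup_{s\le t}|M^{(n)}(s)|$ (splitting the resulting $\sup$ term by Young's inequality to absorb a small multiple of $\mathbb{E}\sup_{s\le t}(1+|X^{(n)}(s)|^2)^{r/2}$ into the left side), one arrives at an inequality of the shape
$$
g_n(t):=\mathbb{E}\Big[\sup_{s\in[0,t]}(1+|X^{(n)}(s)|^2)^{r/2}\Big]
\leq C_{r}\,\mathbb{E}(1+|X(0)|^2)^{r/2}+C_{r}\int_0^t K_s(1)\Big(1+g_n(s)+\mathbb{E}|X^{(n)}(\chi_n(s))|^\kappa\Big)ds.
$$
Here I would use the elementary bound $\mathbb{E}|X^{(n)}(\chi_n(s))|^\kappa\le \mathbb{E}\sup_{u\le s}(1+|X^{(n)}(u)|^2)^{r/2}=g_n(s)$, since $\chi_n(s)\le s$ and $\kappa\le r$; this is exactly the place where freezing the measure at the left endpoint $\chi_n(s)$ of the subinterval is essential, because it makes the nonlocal term dominated by the running supremum $g_n$ rather than requiring information about the future. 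Thus the measure term contributes no genuinely new difficulty once one observes this monotonicity, and the integrand becomes $K_s(1)(1+g_n(s))$ up to constants.

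Finally, since $g_n(t)<\infty$ for each fixed $n$ by \eqref{es5}, and $\sup_{t\in[0,T]}K_t(1)<\infty$ (hence $\int_0^T K_s(1)ds<\infty$, though boundedness is not even needed here, only integrability from $(\mathbf{A2})$), Gronwall's lemma yields
$$
g_n(T)\leq C_r\big(1+\mathbb{E}|X(0)|^r\big)\exp\Big(C_r\int_0^T K_s(1)\,ds\Big)=:C_{r,T},
$$
and the bound is uniform in $n$ because neither the prefactor nor the exponent depends on $n$. Replacing $(1+|x|^2)^{r/2}$ by $|x|^r$ up to constants gives \eqref{es6}, completing the proof. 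One technical caveat to address in the write-up: to run the BDG step rigorously one should first localize with stopping times $\tau_R^{X^{(n)}}\wedge T$ to ensure the stochastic integral is a true martingale, derive the bound with the localized supremum, and then let $R\to\infty$ using \eqref{es5} and Fatou's lemma.
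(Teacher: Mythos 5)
Your overall architecture (It\^o's formula for the $r$-th power, Burkholder--Davis--Gundy with absorption of the supremum, Gronwall, domination of the frozen measure term $\mathbb{E}|X^{(n)}(\chi_n(s))|^\kappa$ by the running supremum, and localization justified by \eqref{es5}) is exactly the paper's, but there is a genuine gap in how you estimate the drift term, and it is not a cosmetic one. You bound $2\langle b,X^{(n)}(s)\rangle\le |b|^2+|X^{(n)}(s)|^2$ and then control $|b|^2$ by the growth condition $(\mathbf{A4})$, which only gives $|b(t,x,\mu)|^2\le K_t(1)(1+|x|^\kappa+\mu(|\cdot|^\kappa))$. After multiplying by the It\^o factor $|X^{(n)}(s)|^{r-2}$ (or $(1+|X^{(n)}(s)|^2)^{r/2-1}$), this produces a term of order $|X^{(n)}(s)|^{r-2+\kappa}$, and correspondingly $\mathbb{E}\big[|X^{(n)}(s)|^{r-2}\big]\cdot\mathbb{E}|X^{(n)}(\chi_n(s))|^\kappa\lesssim g_n(s)^{(r-2+\kappa)/r}$ for the nonlocal piece. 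Since the standing assumption is only $\kappa\ge 2$ and the applications genuinely use $\kappa>2$ (e.g.\ the granular media and kinetic examples with $\kappa=2k\vee 2$), the exponent $r-2+\kappa$ strictly exceeds $r$, the resulting integral inequality is superlinear in $g_n$, and Gronwall no longer closes; one would at best get a local-in-time bound that may blow up before $T$. Your remark ``since $\kappa\le r$ we can absorb $|X^{(n)}(s)|^\kappa\le 1+|X^{(n)}(s)|^r$'' makes this worse, not better, because the absorbed term still gets multiplied by $|X^{(n)}(s)|^{r-2}$.

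The fix --- and the reason the hypotheses are structured as they are --- is to use the weak coercivity $(\mathbf{A3})$ (which you conflate with $(\mathbf{A4})$ in your opening paragraph) to bound the \emph{combination} $2\langle b(s,x,\mu),x\rangle+\|\sigma(s,x,\mu)\|^2\le K_s(1)(1+|x|^2+\mu(|\cdot|^2))$ directly, with only quadratic growth, and the $\sigma$-growth part of $(\mathbf{A4})$ for the remaining quadratic-variation correction $|X^{(n)}(s)|^{r-4}|\sigma^*X^{(n)}(s)|^2$. Then Young's inequality gives $|X^{(n)}(s)|^{r-2}(1+|X^{(n)}(s)|^2+\mathbb{E}|X^{(n)}(\chi_n(s))|^2)\le C_r(1+|X^{(n)}(s)|^r+\mathbb{E}|X^{(n)}(\chi_n(s))|^r)$, the nonlocal term is dominated by $g_n(s)$ exactly as you describe, and the rest of your argument goes through verbatim. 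The $\kappa$-growth of $b$ in $(\mathbf{A4})$ is needed elsewhere (for the H\"older continuity in Lemma \ref{pro2} and the convergence of the drift integrals), not for the moment bound.
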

\begin{proof}
Applying It\^{o}'s formula, we have
\begin{eqnarray*}
|X^{(n)}(t)|^r=~~\!\!\!\!\!\!\!\!&&|X(0)|^r+\frac{r(r-2)}{2}   \int_0^t|X^{(n)}(s)|^{r-4}|\sigma(s,X^{(n)}(s),\mu^{(n)}(\chi_n(s)))^*X^{(n)}(s)|^2ds
\nonumber \\
 \!\!\!\!\!\!\!\!&&+\frac{r}{2}\int_0^t|X^{(n)}(s)|^{r-2}\big(2\langle b(s,X^{(n)}(s),\mu^{(n)}(\chi_n(s))),X^{(n)}(s)\rangle
 \nonumber \\
 \!\!\!\!\!\!\!\!&&
 ~~~~~~~~~~~~~~~~~~~~~~~~~~~~+\|\sigma(s,X^{(n)}(s),\mu^{(n)}(\chi_n(s)))\|^2\big)ds
\nonumber \\
 \!\!\!\!\!\!\!\!&&+r\int_0^t|X^{(n)}(s)|^{r-2}\langle X^{(n)}(s),\sigma(s,X^{(n)}(s),\mu^{(n)}(\chi_n(s)))dW(s)\rangle
\nonumber \\
 \leq~~\!\!\!\!\!\!\!\!&&|X(0)|^r+C_r\int_0^tK_s(1)|X^{(n)}(s)|^{r-2}(1+|X^{(n)}(s)|^2+\mathbb{E}|X^{(n)}(\chi_n(s))|^2)ds
 \nonumber \\
 \!\!\!\!\!\!\!\!&&+r\int_0^t|X^{(n)}(s)|^{r-2}\langle X^{(n)}(s),\sigma(s,X^{(n)}(s),\mu^{(n)}(\chi_n(s)))dW(s)\rangle
\nonumber \\
 \leq~~\!\!\!\!\!\!\!\!&&|X(0)|^r+C_r\int_0^tK_s(1)(1+|X^{(n)}(s)|^r+\mathbb{E}|X^{(n)}(\chi_n(s))|^r)ds
 \nonumber \\
 \!\!\!\!\!\!\!\!&&+r\int_0^t|X^{(n)}(s)|^{r-2}\langle X^{(n)}(s),\sigma(s,X^{(n)}(s),\mu^{(n)}(\chi_n(s)))dW(s)\rangle,
\end{eqnarray*}
where we used  (\ref{a3}) and (\ref{gro}) in the first inequality and Jensen's inequality, Young's inequality in the last step.

Due to (\ref{es5}), by Burkholder-Davis-Gundy's inequality and Jensen's inequality, it holds that
\begin{eqnarray*}
\mathbb{E}\Big[\sup_{t\in[0,T]}|X^{(n)}(t)|^r\Big]\leq~~\!\!\!\!\!\!\!\!&&C_r\Big(\mathbb{E}|X(0)|^r+\int_0^TK_s(1)ds\Big)+C_r\int_0^TK_s(1)\mathbb{E}\Big[\sup_{u\in[0,s]}|X^{(n)}(u)|^r\Big]ds
\nonumber \\
 \!\!\!\!\!\!\!\!&&+\frac{1}{2}\mathbb{E}\Big[\sup_{t\in[0,T]}|X^{(n)}(t)|^r\Big],
\end{eqnarray*}
which implies the uniform estimate (\ref{es6}) by using Gronwall's lemma.
\end{proof}

\begin{remark}
We remark that (\ref{es5}) is an important argument that allows to not use stopping times to localize the
process (because (\ref{es5}) guarantees two things: the expecations are finite and the local
martingale is a real martingale), which is a crucial point in the study of McKean-Vlasov equations.
\end{remark}

\begin{lemma}\label{pro2}
Assume that the conditions $(\mathbf{A1})$-$(\mathbf{A4})$ and (\ref{esK}) hold. For any  $X(0)\in L^r(\Omega;\mathbb{R}^d)$ with $r>\kappa$, there is $C_{r,T}>0$ independent of $n$ such that for any $t,s\in[0,T]$ and $2\leq q\leq \frac{2r}{\kappa}$,
\begin{equation*}
\sup_{n\geq 1}\mathbb{E}|X^{(n)}(t)-X^{(n)}(s)|^{q}\leq C_{r,T}|t-s|^{\frac{q}{2}}.
\end{equation*}
\end{lemma}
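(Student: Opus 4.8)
The plan is to estimate the increment directly from the integral form of the approximating equation~(\ref{eqapp}), splitting it into the drift and stochastic parts and bounding each by the growth condition $(\mathbf{A4})$ together with the uniform moment estimate already established in Lemma~\ref{pro1}. Assume without loss of generality $0\le s<t\le T$ and write
\[
X^{(n)}(t)-X^{(n)}(s)=\int_s^t b(u,X^{(n)}(u),\mu^{(n)}(\chi_n(u)))\,du+\int_s^t \sigma(u,X^{(n)}(u),\mu^{(n)}(\chi_n(u)))\,dW(u),
\]
so that, up to a constant $C_q$, $\mathbb{E}|X^{(n)}(t)-X^{(n)}(s)|^q$ is bounded by the sum of the $q$-th moments of the two terms on the right.

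For the drift term, Hölder's inequality in time gives
\[
\mathbb{E}\Big|\int_s^t b(u,X^{(n)}(u),\mu^{(n)}(\chi_n(u)))\,du\Big|^q\le |t-s|^{q-1}\int_s^t \mathbb{E}|b(u,X^{(n)}(u),\mu^{(n)}(\chi_n(u)))|^q\,du.
\]
By the growth of $b$ in $(\mathbf{A4})$ and~(\ref{esK}) one has $|b(u,x,\mu)|^q\le C_{q,T}\big(1+|x|^{\kappa q/2}+\mu(|\cdot|^\kappa)^{q/2}\big)$. Since $2\le q\le 2r/\kappa$ we have $\kappa q/2\le r$; moreover $\mu^{(n)}(\chi_n(u))(|\cdot|^\kappa)=\mathbb{E}|X^{(n)}(\chi_n(u))|^\kappa\le\mathbb{E}[\sup_{v\in[0,T]}|X^{(n)}(v)|^\kappa]$, which is finite and bounded uniformly in $n$ by Lemma~\ref{pro1} (here $\kappa<r$). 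Hence $\sup_{n}\sup_{u\in[0,T]}\mathbb{E}|b(u,X^{(n)}(u),\mu^{(n)}(\chi_n(u)))|^q\le C_{r,T}$, and the drift contribution is at most $C_{r,T}|t-s|^q\le C_{r,T}T^{q/2}|t-s|^{q/2}$.

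For the stochastic term, the Burkholder--Davis--Gundy inequality followed by Hölder's inequality yields
\[
\mathbb{E}\Big|\int_s^t \sigma(u,X^{(n)}(u),\mu^{(n)}(\chi_n(u)))\,dW(u)\Big|^q\le C_q\,|t-s|^{q/2-1}\int_s^t \mathbb{E}\|\sigma(u,X^{(n)}(u),\mu^{(n)}(\chi_n(u)))\|^q\,du.
\]
Using the growth bound~(\ref{gro}) on $\sigma$, together with~(\ref{esK}) and Lemma~\ref{pro1} exactly as above (this step only needs $q\le r$, which follows from $q\le 2r/\kappa$ and $\kappa\ge2$), one gets $\sup_n\sup_{u\in[0,T]}\mathbb{E}\|\sigma(u,X^{(n)}(u),\mu^{(n)}(\chi_n(u)))\|^q\le C_{r,T}$, so this term is bounded by $C_{r,T}|t-s|^{q/2}$. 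Adding the two estimates gives the claim.

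I do not expect a genuine obstacle here: the argument is routine once Lemma~\ref{pro1} is in hand. The only point that requires care is that the drift is allowed to grow like $|x|^\kappa$ rather than linearly, so the $q$-th moment of $b$ is finite only when $\kappa q/2\le r$; this is precisely why the statement is restricted to $q\le 2r/\kappa$. One must also keep in mind that $\mu^{(n)}(\chi_n(u))$ is the law of $X^{(n)}(\chi_n(u))$, so its $\kappa$-th moment is again controlled by the uniform bound of Lemma~\ref{pro1}.
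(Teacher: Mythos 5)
Your proposal is correct and follows essentially the same route as the paper: split the increment into the drift and stochastic integrals, apply H\"older's inequality in time to the drift and the Burkholder--Davis--Gundy plus H\"older estimate to the stochastic part, and bound both via $(\mathbf{A4})$, (\ref{esK}) and the uniform moment bound of Lemma \ref{pro1}, with the restriction $q\le 2r/\kappa$ ensuring the required moments $\kappa q/2\le r$ (and $q\le r$) are finite. The only cosmetic difference is that the paper bounds the integrands by $\sup_{u\in[0,T]}|X^{(n)}(u)|$-moments directly rather than first passing through the law $\mu^{(n)}(\chi_n(u))$, which changes nothing of substance.
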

\begin{proof}
By $(\mathbf{A4})$,  there is $C_{r,T}>0$ such that for any $t,s\in[0,T]$,
\begin{eqnarray*}
\!\!\!\!\!\!\!\!&&\mathbb{E}|X^{(n)}(t)-X^{(n)}(s)|^q\nonumber \\
\leq~~\!\!\!\!\!\!\!\!&& C_T|t-s|^{q-1}\mathbb{E}\int_{s}^{t}|b(u,X^{(n)}(u),\mu^{(n)}(\chi_n(u)))|^{q} du
\nonumber \\
\!\!\!\!\!\!\!\!&&
+C_T\mathbb{E}\Big|\int_{s}^{t}\sigma(u,X^{(n)}(u),\mu^{(n)}(\chi_n(u)))dW(u)\Big|^q
\nonumber \\
\leq~~  \!\!\!\!\!\!\!\!&&C_T|t-s|^{q-1}\int_{s}^{t}\Big(1+\mathbb{E}\big(\sup_{u\in[0,T]}|X^{(n)}(u)|^{\frac{\kappa q}{2}}\big)+\mathbb{E}\big(\sup_{u\in[0,T]}|X^{(n)}(\chi_n(u))|^{\frac{\kappa q}{2}}\big)\Big)du
\nonumber \\
\!\!\!\!\!\!\!\!&&+C_T|t-s|^{\frac{q-2}{2}}\int_{s}^{t}\Big(1+\mathbb{E}\big(\sup_{u\in[0,T]}
|X^{(n)}(u)|^{q}\big)+\mathbb{E}\big(\sup_{u\in[0,T]}|X^{(n)}(\chi_n(u))|^{q}\big)\Big)du
\nonumber \\
\leq~~\!\!\!\!\!\!\!\!&&C_{r,T}|t-s|^{\frac{q}{2}},
\end{eqnarray*}
where we used (\ref{esK}) in the second inequality and (\ref{es6}) in the last step.
\end{proof}

\subsection{Existence of strong solutions}

We now prove the existence of (strong) solutions to (\ref{eq1}) by the following proposition.

  \begin{proposition}\label{pro3}
Assume that  $(\mathbf{A1})$-$(\mathbf{A4})$ and (\ref{esK}) hold. For any $X(0)\in L^r(\Omega;\mathbb{R}^d)$ with $r>\kappa$, there exists a strong solution $\{X(t)\} _{t\in[0,T]}$ to (\ref{eq1}) in the sense of Definition \ref{de1}. Moreover,
\begin{equation}\label{es19}
\mathbb{E}\Big[\sup_{t\in[0,T]}|X(t)|^{r}\Big]<\infty.
\end{equation}
\end{proposition}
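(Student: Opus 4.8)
\textbf{Proof plan for Proposition \ref{pro3}.}
The plan is to pass to the limit in the Euler-type approximation \eqref{eqapp} using a compactness/martingale argument, and then upgrade the resulting weak limit to a genuine strong solution of \eqref{eq1}. First I would invoke Lemmas \ref{pro1} and \ref{pro2}: the uniform moment bound \eqref{es6} together with the $\frac{q}{2}$-H\"older estimate in time yield, via the Kolmogorov–Chentsov criterion, that the laws $\{\mathcal{L}_{X^{(n)}}\}_{n\geq1}$ of the approximating processes are tight on $\mathcal{C}_T=C([0,T];\mathbb{R}^d)$ (and in fact on $\mathcal{P}_{q,T}$ for $2\le q<\frac{2r}{\kappa}$, using uniform integrability coming from the stronger bound with exponent $r$). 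Since $W$ is fixed, I would actually consider the joint laws of $(X^{(n)},W)$ on $\mathcal{C}_T\times C([0,T];\mathbb{R}^m)$, which are tight as well. By Prokhorov's theorem extract a weakly convergent subsequence, and by the Skorokhod representation theorem realize, on some probability space $(\tilde\Omega,\tilde{\mathcal F},\tilde{\mathbb P})$, random variables $(\tilde X^{(n)},\tilde W^{(n)})\to(\tilde X,\tilde W)$ almost surely in the path topology, with $\mathcal{L}_{(\tilde X^{(n)},\tilde W^{(n)})}=\mathcal{L}_{(X^{(n)},W)}$; in particular each $\tilde W^{(n)}$ is an $\{\tilde{\mathcal F}^{(n)}_t\}$-Wiener process and $\tilde X^{(n)}$ solves the same approximating SDE \eqref{eqapp} driven by $\tilde W^{(n)}$ with the same (deterministic) marginal laws $\mu^{(n)}(\chi_n(t))=\mathcal{L}_{\tilde X^{(n)}(\chi_n(t))}$.

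Next I would identify the limit equation. From a.s.\ convergence in $\mathcal{C}_T$ plus the uniform $r$-th moment bound one gets $\mathbb{W}_{2}\big(\mathcal{L}_{\tilde X^{(n)}(t)},\mathcal{L}_{\tilde X(t)}\big)\to0$ and also $\chi_n(t)\to t$ uniformly, hence $\mu^{(n)}(\chi_n(t))\to\mathcal{L}_{\tilde X(t)}$ in $\mathcal{P}_2(\mathbb{R}^d)$ for each $t$, with uniform moment control. The continuity assumption $(\mathbf{A1})$ then gives pointwise convergence of the coefficients along the paths, $b(t,\tilde X^{(n)}(t),\mu^{(n)}(\chi_n(t)))\to b(t,\tilde X(t),\mathcal{L}_{\tilde X(t)})$ and similarly for $\sigma$; the growth bounds $(\mathbf{A4})$ and \eqref{gro} together with the uniform moment estimate \eqref{es6} provide the domination needed to pass to the limit in the (deterministic) drift integral and, via the Burkholder–Davis–Gundy inequality and Vitali's theorem, in the stochastic integral $\int_0^\cdot \sigma(\cdot,\tilde X^{(n)},\mu^{(n)}(\chi_n(\cdot)))\,d\tilde W^{(n)}\to\int_0^\cdot\sigma(\cdot,\tilde X,\mathcal{L}_{\tilde X})\,d\tilde W$ in $L^2(\tilde\Omega;\mathcal C_T)$ along the subsequence. (The standard device here is to check that the pair $(\tilde X^{(n)},\tilde W^{(n)})$ still satisfies the integral identity of \eqref{eqapp}, take limits term by term, and use that stochastic integrals against the approximating Wiener processes converge to the stochastic integral against the limit Wiener process because the integrands converge in the appropriate $L^2$ sense and the filtrations are compatible.) This shows $(\tilde X,\tilde W)$ is a weak solution of \eqref{eq1}, and Fatou's lemma applied to \eqref{es6} gives the moment bound \eqref{es19} for $\tilde X$, hence \eqref{es2}.

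Finally, to obtain a genuine \emph{strong} solution on the original space I would appeal to the modified Yamada–Watanabe theorem of \cite{HW1} (quoted as Lemma \ref{lem2} in the paper): once weak existence is established and pathwise uniqueness is known, strong existence follows. Here pathwise uniqueness for \eqref{eq1} is \emph{not} assumed in Proposition \ref{pro3} — but note the proposition only claims existence of \emph{a} strong solution, so I would instead argue directly: the approximating equations \eqref{eq2}–\eqref{eq4} are classical (non-McKean–Vlasov) SDEs, each pathwise unique by \cite[Theorem 3.1.1]{LR1} under $(\mathbf{A1})$–$(\mathbf{A4})$, so each $X^{(n)}$ is itself a \emph{strong} solution (a measurable functional of $W$ and $X(0)$); combined with the tightness and the identification of the limit, the modified Yamada–Watanabe criterion then yields that the limit is realizable as a strong solution of \eqref{eq1} on $(\Omega,\mathcal F,\{\mathcal F_t\},\mathbb P)$ driven by the original $W$.

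\textbf{Main obstacle.} The delicate point is the non-locality of the measure argument in the passage to the limit: one must show $\mathcal{L}_{\tilde X^{(n)}(\chi_n(t))}\to\mathcal{L}_{\tilde X(t)}$ in a topology strong enough for $(\mathbf{A1})$ to apply (i.e.\ in $\mathbb{W}_2$, not merely weakly), which requires promoting a.s.\ path convergence to convergence of second moments — this is where the extra exponent $r>\kappa$ in \eqref{es6} is essential, giving uniform integrability of $\sup_{t}|\tilde X^{(n)}(t)|^2$. A secondary technical nuisance is that the coefficients are only continuous, not Lipschitz, in $(x,\mu)$, so one cannot estimate the difference of the coefficients quantitatively and must instead rely entirely on the continuity $(\mathbf{A1})$ plus the growth bounds $(\mathbf{A4})$ for domination; handling the stochastic-integral term then needs the Skorokhod coupling (to have a.s.\ convergence) rather than a direct $L^2$ estimate, since the integrands do not converge in any stronger deterministic sense.
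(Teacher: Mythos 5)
Your compactness--Skorokhod--identification part is essentially the paper's argument: the tightness from Lemmas \ref{pro1} and \ref{pro2}, the Skorokhod coupling, and the key observation that the uniform $r$-th moment bound (with $r>\kappa\geq 2$) plus the time-H\"older estimate upgrade a.s.\ path convergence to $\mathbb{W}_2$-convergence of the frozen marginals $\mathcal{L}_{\tilde X^{(n)}(\chi_n(t))}$, so that $(\mathbf{A1})$ and $(\mathbf{A4})$ let you pass to the limit. (The paper phrases the identification as a martingale problem and invokes the martingale representation theorem, whereas you pass to the limit directly in the stochastic integrals; this is a cosmetic difference.) The genuine gap is in the final step, the passage from weak to strong existence.

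Lemma \ref{lem2} does not ask for pathwise uniqueness of the MVSDE, and it does not ask anything about the Euler approximations: it asks for strong uniqueness of the \emph{decoupled} SDE $dX(t)=b(t,X(t),\mu(t))dt+\sigma(t,X(t),\mu(t))dW(t)$, where $\mu(t)=\mathcal{L}_{X(t)}$ is the (deterministic) marginal flow of the weak solution you have just constructed. Your substitute argument --- each $X^{(n)}$ solves a classical SDE that is pathwise unique by \cite[Theorem 3.1.1]{LR1}, hence is a strong solution, and ``combined with the tightness and the identification of the limit, the modified Yamada--Watanabe criterion then yields'' a strong solution of \eqref{eq1} --- is not a valid application of Lemma \ref{lem2}, nor a valid argument on its own: weak convergence of strong solutions of a sequence of \emph{different} equations does not produce a strong solution of the limit equation (that inference would require a Gy\"ongy--Krylov type argument, which in turn needs a uniqueness statement about limit points that you have not supplied, and which is exactly what is unavailable for the MVSDE at this stage). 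The repair is precisely the paper's route: freeze $\mu(t)=\mathcal{L}_{X(t)}$ of the constructed weak solution; since $\sup_{t\in[0,T]}\mu(t)(|\cdot|^{\kappa})<\infty$ by the moment bound on the limit, the measure-dependent terms in $(\mathbf{A2})$--$(\mathbf{A4})$ become bounded functions of $t$, so the frozen coefficients $b^{\mu},\sigma^{\mu}$ satisfy the hypotheses of \cite[Theorem 3.1.1]{LR1} and the decoupled SDE has strong uniqueness; Lemma \ref{lem2} then yields a strong solution of \eqref{eq1}, and \eqref{es19} follows as in Lemma \ref{pro1} (or by Fatou). With this correction your outline coincides with the paper's proof.
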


  \begin{proof}
In view of Lemmas \ref{pro1} and \ref{pro2}, we can prove that $(X^{(n)})_{n\geq 1}$ satisfies the conditions (i) and (ii) of Lemma \ref{lem1} in Appendix,
thus $(X^{(n)})_{n\geq 1}$ is tight. Furthermore, using Ulam's tightness theorem and the fact that the product of two compact sets is compact yields that
 $$(X^{(n)},W)_{n\geq 1}~\text{is tight in}~C([0,T];\mathbb{R}^d)\times C([0,T];\mathbb{R}^m).$$

Therefore, applying the Skorokhod representation theorem (see Lemma \ref{lem22} in Appendix), we can deduce that there exists a subsequence of $(X^{(n)},W)_{n\geq 1}$, denoted again by $(X^{(n)},W)_{n\geq 1}$, a probability space $(\tilde{\Omega},\tilde{\mathcal{F}},\tilde{\mathbb{P}})$, and, on this space, $C([0,T];\mathbb{R}^d)\times C([0,T];\mathbb{R}^m)$-valued random variables $(\tilde{X}^{(n)},\tilde{W}^{(n)})$ coincide in law with $(X^{(n)},W)$   in $(\Omega,\mathcal{F},\mathbb{P})$ such that
\begin{eqnarray}
\!\!\!\!\!\!\!\!&&\tilde{X}^{(n)}\to \tilde{X}~\text{in}~C([0,T];\mathbb{R}^d),~\tilde{\mathbb{P}}\text{-a.s.},~\text{as}~n\to\infty,\label{es8}
 \\
\!\!\!\!\!\!\!\!&&
\tilde{W}^{(n)}\to \tilde{W}~\text{in}~C([0,T];\mathbb{R}^m),~\tilde{\mathbb{P}}\text{-a.s.},~\text{as}~n\to\infty.\nonumber
\end{eqnarray}

Let $\tilde{\mathcal{F}}^{(n)}(t)$ (resp.~$\tilde{\mathcal{F}}(t)$) be  the completion of the $\sigma$-algebra generated by $\{\tilde{X}^{(n)}(s),\tilde{W}^{(n)}(s):s\leq t\}$ (resp.~$\{\tilde{X}(s),\tilde{W}(s):s\leq t\}$), then it is easy to see that $\tilde{X}^{(n)}(t)$ is $\tilde{\mathcal{F}}^{(n)}(t)$-adapted and continuous, $\tilde{W}^{(n)}(t)$ is a standard $m$-dimensional Wiener process on $(\tilde{\Omega},\tilde{\mathcal{F}},\{\tilde{\mathcal{F}}^{(n)}(t)\}_{t\in[0,T]},\tilde{\mathbb{P}})$.

Note that, by the uniform estimate (\ref{es6}) and the equivalence of the laws of $\tilde{X}^{(n)}$ and $X^{(n)}$, one can also obtain that for any $T>0$ and $r>\kappa$,  there is a constant $C_{r,T}>0$ such that
\begin{equation}\label{es7}
\sup_{n\geq 1}\tilde{\mathbb{E}}\Big[\sup_{t\in[0,T]}|\tilde{X}^{(n)}(t)|^r\Big]\leq C_{r,T}.
\end{equation}
 Taking into account the convergence (\ref{es8}) and the uniform integrability (\ref{es7}), by Vitali's convergence theorem (see Lemma \ref{lem9} in Appendix), we have
\begin{equation}\label{es9}
\tilde{\mathbb{E}}\Big[\sup_{t\in[0,T]}|\tilde{X}^{(n)}(t)-\tilde{X}(t)|^{2}\Big]\to 0,~\text{as}~n\to\infty.
\end{equation}
Moreover, by (\ref{es8}), (\ref{es7}) and Fatou's lemma, it is clear that
 \begin{equation}\label{es10}
\tilde{\mathbb{E}}\Big[\sup_{t\in[0,T]}|\tilde{X}(t)|^{r}\Big]\leq C_{r,T}.
\end{equation}

Let the process $\tilde{M}^{(n)}(t)$ with trajectories in $C([0,T];\mathbb{R}^d)$ be
$$\tilde{M}^{(n)}(t)=\tilde{X}^{(n)}(t)-\tilde{X}^{(n)}(0)-\int_0^tb(s,\tilde{X}^{(n)}(s),\tilde{\mu}^{(n)}(\chi_n(s)))ds,$$
where we denote $\tilde{\mu}^{(n)}(t)=\mathcal{L}_{\tilde{X}^{(n)}(t)}$, which is a square integrable martingale w.r.t.~the filtration $\tilde{\mathcal{F}}^{(n)}(t)$ with quadratic variation
$$\langle \tilde{M}^{(n)}\rangle_t=\int_0^t \sigma\big(s,\tilde{X}^{(n)}(s),\tilde{\mu}^{(n)}(\chi_n(s))\big)\sigma^*\big(s,\tilde{X}^{(n)}(s),\tilde{\mu}^{(n)}(\chi_n(s))\big)ds.$$
Indeed, for all $s\leq t\in [0,T]$ and bounded continuous functions $\Phi(\cdot)$ on $C([0,T];\mathbb{R}^d)\times C([0,T];\mathbb{R}^m)$, since $\tilde{X}^{(n)}$ and $X^{(n)}$ have the same distribution, it follows that
\begin{eqnarray}\label{mar1}
\tilde{\mathbb{E}}\Big[\big(\tilde{M}^{(n)}(t)-\tilde{M}^{(n)}(s)\big)\Phi((\tilde{X}^{(n)},\tilde{W}^{(n)})|_{[0,s]})\Big]=0
\end{eqnarray}
and
\begin{eqnarray}\label{mar2}
\!\!\!\!\!\!\!\!&&\tilde{\mathbb{E}}\Big[\Big(\tilde{M}^{(n)}(t)\otimes\tilde{M}^{(n)}(t)-\tilde{M}^{(n)}(s)\otimes\tilde{M}^{(n)}(s)
\nonumber \\
\!\!\!\!\!\!\!\!&&
-\int_s^t a\big(r,\tilde{X}^{(n)}(r),\tilde{\mu}^{(n)}(\chi_n(r))\big)dr\Big)\Phi((\tilde{X}^{(n)},\tilde{W}^{(n)})|_{[0,s]})\Big]=0,
\end{eqnarray}
where $a:=\sigma\sigma^*$.

In the sequel, we will take the limit in (\ref{mar1}) and (\ref{mar2}).
Denote $\tilde{\mu}(t)=\mathcal{L}_{\tilde{X}(t)}$. Note that by Lemma \ref{pro2} and the convergence (\ref{es9}),
\begin{eqnarray}\label{es13}
\lim_{n\to\infty}\sup_{s\in[0,T]}\mathbb{W}_2(\tilde{\mu}^{(n)}(\chi_n(s)),\tilde{\mu}(s))^2\leq~~ \!\!\!\!\!\!\!\!&&C\lim_{n\to\infty}\sup_{s\in[0,T]}\tilde{\mathbb{E}}|\tilde{X}^{(n)}(s)-\tilde{X}^{(n)}(\chi_n(s))|^2
\nonumber \\
\!\!\!\!\!\!\!\!&&
+C\lim_{n\to\infty}\sup_{s\in[0,T]}\tilde{\mathbb{E}}|\tilde{X}^{(n)}(s)-\tilde{X}(s)|^2
\nonumber \\
\leq~~\!\!\!\!\!\!\!\!&&C\lim_{n\to\infty}T_n
=0.
\end{eqnarray}
Hence, according to the continuity of $b,\sigma$ (see condition $(\mathbf{A1})$) and (\ref{es13}), it is obvious that for any $s\in[0,T]$,
\begin{eqnarray*}
\!\!\!\!\!\!\!\!&&|b(s,\tilde{X}^{(n)}(s),\tilde{\mu}^{(n)}(\chi_n(s)))-b(s,\tilde{X}(s),\tilde{\mu}(s))|\to0,~\tilde{\mathbb{P}}\text{-a.s.},~\text{as}~n\to\infty,
\nonumber \\
\!\!\!\!\!\!\!\!&&\|\sigma(s,\tilde{X}^{(n)}(s),\tilde{\mu}^{(n)}(\chi_n(s)))-\sigma(s,\tilde{X}(s),\tilde{\mu}(s))\|\to0,~\tilde{\mathbb{P}}\text{-a.s.},~\text{as}~n\to\infty.
\end{eqnarray*}
Then by (\ref{es8}), (\ref{es7}), (\ref{es10}), (\ref{es13}) and the dominated convergence theorem, one can infer that for any $t\in[0,T]$,
\begin{eqnarray}
\!\!\!\!\!\!\!\!&&\int_0^t|b(s,\tilde{X}^{(n)}(s),\tilde{\mu}^{(n)}(\chi_n(s)))-b(s,\tilde{X}(s),\tilde{\mu}(s))|^2ds\to 0,~\tilde{\mathbb{P}}\text{-a.s.},~\text{as}~n\to\infty,\label{es66}
 \\
\!\!\!\!\!\!\!\!&&\int_0^t\|\sigma(s,\tilde{X}^{(n)}(s),\tilde{\mu}^{(n)}(\chi_n(s)))-\sigma(s,\tilde{X}(s),\tilde{\mu}(s))\|^2ds\to 0,~\tilde{\mathbb{P}}\text{-a.s.},~\text{as}~n\to\infty.\label{es17}
\end{eqnarray}
Moreover, by condition $(\mathbf{A4})$, (\ref{es7}) and (\ref{es10}), we deduce that
\begin{eqnarray}\label{es18}
~~~&&\tilde{\mathbb{E}}\Big|\int_0^t\|
\sigma(s,\tilde{X}^{(n)}(s),\tilde{\mu}^{(n)}(\chi_n(s)))- \sigma(s,\tilde{X}(s),\tilde{\mu}(s))\|^2ds\Big|^{r'}
\nonumber \\
~~~&&\leq C_T\tilde{\mathbb{E}}\Big[\int_0^tK_s(1)\Big(1+|\tilde{X}^{(n)}(s)|^{2r'}+\tilde{\mathbb{E}}|\tilde{X}^{(n)}(\chi_n(s))|^{2r'}
\nonumber \\
~~~&&~~~~~~~~~~~~~~~~~~
+|\tilde{X}(s)|^{2r'}+\tilde{\mathbb{E}}|\tilde{X}(s)|^{2r'}\Big)ds\Big]
\nonumber \\
~~~&&\leq C_T,
\end{eqnarray}
and
\begin{eqnarray}\label{es65}
~~~&&\tilde{\mathbb{E}}\Big|\int_0^t|
b(s,\tilde{X}^{(n)}(s),\tilde{\mu}^{(n)}(\chi_n(s)))- b(s,\tilde{X}(s),\tilde{\mu}(s))|^2ds\Big|^{r'}
\nonumber \\
~~~&&\leq C_T\tilde{\mathbb{E}}\Big[\int_0^tK_s(1)\Big(1+|\tilde{X}^{(n)}(s)|^{\kappa r'}+\tilde{\mathbb{E}}|\tilde{X}^{(n)}(\chi_n(s))|^{\kappa r'}
\nonumber \\
~~~&&~~~~~~~~~~~~~~~~~~
+|\tilde{X}(s)|^{\kappa r'}+\tilde{\mathbb{E}}|\tilde{X}(s)|^{\kappa r'}\Big)ds\Big]
\nonumber \\
~~~&&\leq C_T,
\end{eqnarray}
where $1<r'<\frac{r}{\kappa}$.

Consequently, from (\ref{es66})-(\ref{es65}) and the Vitali's convergence theorem, it follows that
\begin{eqnarray*}
\!\!\!\!\!\!\!\!&&\lim_{n\to\infty}\tilde{\mathbb{E}}\Big[\int_0^t\|
\sigma(s,\tilde{X}^{(n)}(s),\tilde{\mu}^{(n)}(\chi_n(s)))- \sigma(s,\tilde{X}(s),\tilde{\mu}(s))\|^2ds\Big]
=0,
\nonumber \\
\!\!\!\!\!\!\!\!&&\lim_{n\to\infty}\tilde{\mathbb{E}}\Big[\int_0^t|
b(s,\tilde{X}^{(n)}(s),\tilde{\mu}^{(n)}(\chi_n(s)))- b(s,\tilde{X}(s),\tilde{\mu}(s))|^2ds\Big]
=0.
\end{eqnarray*}
Then we deduce that for all $s\leq t\in [0,T]$ and bounded continuous functions $\Phi(\cdot)$ on $C([0,T];\mathbb{R}^d)\times C([0,T];\mathbb{R}^m)$,
\begin{eqnarray}\label{mar3}
\tilde{\mathbb{E}}\Big[\big(\tilde{M}(t)-\tilde{M}(s)\big)\Phi((\tilde{X},\tilde{W})|_{[0,s]})\Big]=0
\end{eqnarray}
and
\begin{eqnarray}\label{mar4}
\!\!\!\!\!\!\!\!&&\tilde{\mathbb{E}}\Big[\Big(\tilde{M}(t)\otimes\tilde{M}(t)-\tilde{M}(s)\otimes\tilde{M}(s)
\nonumber \\
\!\!\!\!\!\!\!\!&&~~~
-\int_s^t \sigma\big(r,\tilde{X}(r),\tilde{\mu}(r)\big)\sigma^*\big(r,\tilde{X}(r),\tilde{\mu}(r)\big)dr\Big)\Phi((\tilde{X},\tilde{W})|_{[0,s]})\Big]=0,
\end{eqnarray}
where $\tilde{M}(t)$ is defined by
$$\tilde{M}(t):=\tilde{X}(t)-\tilde{X}(0)-\int_0^tb(s,\tilde{X}(s),\tilde{\mu}(s))ds.$$
Therefore, by (\ref{mar3}) and (\ref{mar4}), process $\tilde{M}(t)$ is a square integrable $\tilde{\mathcal{F}}(t)$-martingale in $\mathbb{R}^d$
with quadratic variation
$$\langle \tilde{M}\rangle_t=\int_0^t \sigma\big(s,\tilde{X}(s),\tilde{\mu}(s)\big)\sigma^*\big(s,\tilde{X}(s),\tilde{\mu}(s)\big)ds.$$
Consequently, we conclude that (\ref{eq1}) admits a weak solution by the martingale representation theorem.

Now let $\mu(t)=\mathcal{L}_{X(t)}$, we consider the decoupled SDE
\begin{equation}\label{eq7}
dX(t)=b^{\mu}(t,X(t))dt+\sigma^{\mu}(t,X(t))dW(t),
\end{equation}
where $b^{\mu}(t,X(t)):=b(t,X(t),\mu(t))$, $\sigma^{\mu}(t,X(t)):=\sigma(t,X(t),\mu(t))$. Under the assumptions $(\mathbf{A1})$-$(\mathbf{A4})$,  using the classical result \cite[Theorem 3.1.1]{LR1}, it is easy to show that (\ref{eq7}) has strong uniqueness. Then in view of the modified Yamada-Watanabe  theorem (see Lemma \ref{lem2}), the MVSDE (\ref{eq1}) has a strong solution.

Finally, the estimate (\ref{es19}) follows from (\ref{es10}).  Hence the proof is complete.
\end{proof}

\begin{remark}
We would like to give some comments on our method used in Proposition \ref{pro3} and explain the main differences compared with  \cite{E1,GHM,KR}.

 \vspace{1mm}
(i) As stated in Remark \ref{remark2.2} (iv), this work generalize  the classical result of Krylov and Rozovskii \cite{KR} (see also Section 3 in \cite{LR1}) to the McKean-Vlasov case, where the authors in \cite{KR} considered an Euler approximation of classical SDEs and then utilize the Picard iteration argument to prove the existence of solution. Compared with \cite{KR}, our strategy is to establish an Euler type approximation (only for distributions) of MVSDEs (\ref{eq1}), and then utilize the tightness argument and martingale approach to prove the existence of solution, which is quite different from \cite{KR}.

 \vspace{1mm}
(ii) The author in \cite{E1} use a Banach-Picard iteration to construct an approximating sequence $X^{[n]}$ of MVSDEs (\ref{eq1}) whereas we here utilize an Euler type scheme (i.e.~(\ref{eqapp})). Based on these, we investigate the tightness of $(X^{(n)},W)_{n}$ instead of $(X^{[n]},X^{[n+1]})_n$ in \cite{E1}, then we can obtain the convergence of $\tilde{\mu}^{(n)}=\mathcal{L}_{\tilde{X}^{(n)}}$  with direct computation here instead of Arzel\`{a}-Ascoli's theorem as in \cite{E1}. Finally, both papers use a generalized version of Yamada and Watanabe result (which is also along
the lines of the proof of Proposition 3.7 of \cite{GHM}).
Clearly, the main advantage of our method  is that we can work with more general assumptions on the coefficients.
\end{remark}

\subsection{Pathwise uniqueness}
In this subsection, we deal with the pathwise uniqueness of (\ref{eq1}). Then we are in the position to derive the strong and weak well-posedness of (\ref{eq1}).

\begin{proposition}\label{pro4}
Under the assumptions in Theorem \ref{th1}, the pathwise uniqueness holds for
solutions in the sense of Definition \ref{de1} provided satisfying (\ref{es2}).
\end{proposition}
\begin{proof}
First,  let $X(t)$, $Y(t)$ be two solutions to (\ref{eq1}) in the sense of Definition \ref{de1}, which satisfy (\ref{es2}). Set
$$\tau_R:=\tau_R^X\wedge\tau_R^Y=\inf\big\{t\in[0,\infty):|X(t)|\vee|Y(t)|\geq R\big\},~R>0,$$
with the convention $\inf{\emptyset}=\infty$. It is easy to see that $\tau_R\uparrow T$ as $R\uparrow\infty$.

Clearly, $Z(t):=X(t)-Y(t)$ satisfies the equation
\begin{eqnarray*}
Z(t)=~~\!\!\!\!\!\!\!\!&&\int_0^t\big(b(s,X(s),\mathcal{L}_{X(s)})-b(s,Y(s),\mathcal{L}_{Y(s)})\big)ds
\nonumber \\
\!\!\!\!\!\!\!\!&&
+\int_0^t\big(\sigma(s,X(s),\mathcal{L}_{X(s)})-\sigma(s,Y(s),\mathcal{L}_{Y(s)})\big)dW(s).
\end{eqnarray*}
\textbf{Case 1:} Assume that the condition $(\mathbf{A2}')$ holds. By It\^{o}'s formula, it follows that for any $t\in[0,T]$,
\begin{eqnarray}\label{eqca1}
\!\!\!\!\!\!\!\!&&\mathbb{E}\Big[\sup_{s\in[0,t\wedge\tau_R]}|Z(s)|^2\Big]
\nonumber \\
=~~\!\!\!\!\!\!\!\!&&\mathbb{E}\Big[\sup_{s\in[0,t\wedge\tau_R]}\int_0^s\Big(2\langle b(u,X(u),\mathcal{L}_{X(u)})-b(u,Y(u),\mathcal{L}_{Y(u)}), Z(u)\rangle
\nonumber \\
\!\!\!\!\!\!\!\!&&
~~~~~+\|\sigma(u,X(u),\mathcal{L}_{X(u)})-\sigma(u,Y(u),\mathcal{L}_{Y(u)})\|^2\Big) du\Big]
\nonumber \\
\!\!\!\!\!\!\!\!&&+2\mathbb{E}\Big[\sup_{s\in[0,t\wedge\tau_R]}\int_0^s\langle Z(u),\big(\sigma(u,X(u),\mathcal{L}_{X(u)})-\sigma(u,Y(u),\mathcal{L}_{Y(u)})\big)dW(u)\rangle\Big].
\nonumber \\
=:~~\!\!\!\!\!\!\!\!&&~(\text{I})+(\text{II}).
\end{eqnarray}
Applying Burkholder-Davis-Gundy's inequality and Young's inequality, we obtain
\begin{eqnarray}\label{eqca2}
(\text{II})\leq~~ \!\!\!\!\!\!\!\!&&C\mathbb{E}\Bigg(\int_0^{t\wedge\tau_R}|Z(s)|^2\|\sigma(s,X(s),\mathcal{L}_{X(s)})-\sigma(s,Y(s),\mathcal{L}_{Y(s)})\|^2ds\Bigg)^{\frac{1}{2}}
\nonumber \\
\leq~~\!\!\!\!\!\!\!\!&&C\mathbb{E}\Bigg(\sup_{s\in[0,t\wedge\tau_R]}|Z(s)|^2\cdot\int_0^{t\wedge\tau_R}\|\sigma(s,X(s),\mathcal{L}_{X(s)})-\sigma(s,Y(s),\mathcal{L}_{Y(s)})\|^2ds\Bigg)^{\frac{1}{2}}
\nonumber \\
\leq~~\!\!\!\!\!\!\!\!&& C\mathbb{E}\Bigg(\int_0^{t\wedge\tau_R}\|\sigma(s,X(s),\mathcal{L}_{X(s)})-\sigma(s,Y(s),\mathcal{L}_{Y(s)})\|^2ds\Bigg)
\nonumber \\
\!\!\!\!\!\!\!\!&&
+\frac{1}{2}\mathbb{E}\Big[\sup_{s\in[0,t\wedge\tau_R]}|Z(s)|^2\Big].
\end{eqnarray}
Combining $(\mathbf{A2}')$, (\ref{eqca1}) and (\ref{eqca2}), we have
\begin{eqnarray*}
\mathbb{E}\Big[\sup_{s\in[0,t\wedge\tau_R]}|Z(s)|^2\Big]
\leq~~\!\!\!\!\!\!\!\!&&C_R\mathbb{E}\Big[\int_0^{t\wedge\tau_R}\Big(|Z(s)|^2+\mathbb{W}_{2,T,R}(\mathcal{L}_{X_s},\mathcal{L}_{Y_s})^2
\nonumber \\
\!\!\!\!\!\!\!\!&&
~~~~~
+C_0e^{-\varepsilon C_R}(1\wedge\mathbb{W}_{2}(\mathcal{L}_{X(s)},\mathcal{L}_{Y(s)})^2)\Big)ds\Big]
\nonumber \\
\!\!\!\!\!\!\!\!&&
+\frac{1}{2}\mathbb{E}\Big[\sup_{s\in[0,t\wedge\tau_R]}|Z(s)|^2\Big].
\end{eqnarray*}

Note that, in view of the definition of $\mathbb{W}_{2,T,R}(\mathcal{L}_{X_s},\mathcal{L}_{Y_s})$, one has
$$\mathbb{W}_{2,T,R}(\mathcal{L}_{X_s},\mathcal{L}_{Y_s})^2\leq \mathbb{E}\|X_{s\wedge\tau_R}-Y_{s\wedge\tau_R}\|_{T}^2=\mathbb{E}\Big[\sup_{r\in[0,s\wedge\tau_R]}|Z(r)|^2\Big].$$
Thus we deduce that
$$\mathbb{E}\Big[\sup_{s\in[0,t]}|Z(s\wedge\tau_R)|^2\Big]\leq C_R\int_0^t\mathbb{E}\Big[\sup_{r\in[0,s]}|Z(r\wedge\tau_R)|^2\Big]ds+C_0C_Re^{-\varepsilon C_R}t.$$
From Fatou's lemma and Gronwall's inequality, we can get
$$\mathbb{E}\Big[\sup_{s\in[0,t]}|Z(s)|^2\Big]\leq \liminf_{R\to\infty}\mathbb{E}\Big[\sup_{s\in[0,t]}|Z(s\wedge\tau_R)|^2\Big]\leq C_0t\liminf_{R\to\infty}C_Re^{-C_R(\varepsilon-t)}.$$
Then we can choose $t_0\in(0,\varepsilon\wedge T)$ such that the pathwise uniqueness holds on interval $[0,t_0)$. Since $t_0$ is independent of initial value, we can prove that for any $t\in[t_0,2t_0\wedge T)$,
$$\mathbb{E}\Big[\sup_{s\in[t_0,2t_0]}|Z(s)|^2\Big]\leq \liminf_{R\to\infty}\mathbb{E}\Big[\sup_{s\in[t_0,2t_0]}|Z(s\wedge\tau_R)|^2\Big]\leq C_0t\liminf_{R\to\infty}C_Re^{-C_R(\varepsilon-(t-t_0))}=0.$$
Repeating the same procedure for finite times, we show the pathwise uniqueness holds up to time $T$.\\

\textbf{Case 2:} Assume that $(\mathbf{A2}'')$ holds. First, by It\^{o}'s formula and $(\mathbf{A2}'')$ we have that for any $t\in[0,T]$,
\begin{eqnarray}\label{eqc1}
\!\!\!\!\!\!\!\!&&\mathbb{E}|Z(t)|^2
\nonumber \\
\leq~~ \!\!\!\!\!\!\!\!&& \mathbb{E}\Big[\int_0^{t}\mathbf{1}_{\{|X(s)|\leq R,~|Y(s)|\leq R\}}\big(K_s(R)+C\mathbb{E}|X(s)|^{\kappa}+C\mathbb{E}|Y(s)|^{\kappa}\big)
\nonumber \\
\!\!\!\!\!\!\!\!&&~~
\cdot\big(|Z(s)|^2+\mathbb{E}|Z(s)|^2\big)ds\Big]
\nonumber \\
\!\!\!\!\!\!\!\!&&
+\mathbb{E}\Big[\int_0^t\Big(\mathbf{1}_{\{|X(s)|\leq R,~|Y(s)|> R\}}+\mathbf{1}_{\{|X(s)|> R,~|Y(s)|\leq R\}}+\mathbf{1}_{\{|X(s)|> R,~|Y(s)|> R\}}\Big)
\nonumber \\
\!\!\!\!\!\!\!\!&&~~
\cdot\Big(2\langle b(s,X(s),\mathcal{L}_{X(s)})-b(s,Y(s),\mathcal{L}_{Y(s)}), Z(s)\rangle
\nonumber \\
\!\!\!\!\!\!\!\!&&
~~~~~+\|\sigma(s,X(s),\mathcal{L}_{X(s)})-\sigma(s,Y(s),\mathcal{L}_{Y(s)})\|^2\Big) du\Big]
\nonumber \\
\leq~~ \!\!\!\!\!\!\!\!&& \int_0^{t}\big(2K_s(R)+C\mathbb{E}|X(s)|^{\kappa}+C\mathbb{E}|Y(s)|^{\kappa}\big)
\mathbb{E}|Z(s)|^2ds+\text{I}(R),
\end{eqnarray}
where the term
\begin{eqnarray*}
\text{I}(R):=~~~\!\!\!\!\!\!\!\!&&C\mathbb{E}\Big[\int_0^t\Big(\mathbf{1}_{\{|X(s)|>R\}}+\mathbf{1}_{\{|Y(s)|> R\}}\Big)
\Big|2\langle b(s,X(s),\mathcal{L}_{X(s)})-b(s,Y(s),\mathcal{L}_{Y(s)}), Z(s)\rangle
\nonumber \\
\!\!\!\!\!\!\!\!&&
~~~~~+\|\sigma(s,X(s),\mathcal{L}_{X(s)})-\sigma(s,Y(s),\mathcal{L}_{Y(s)})\|^2\Big| du\Big].
\end{eqnarray*}
In view of condition $(\mathbf{A4})$, we deduce that
\begin{eqnarray}\label{eqc2}
\text{I}(R)\leq ~~~\!\!\!\!\!\!\!\!&&C\mathbb{E}\Big[\int_0^t\Big(\mathbf{1}_{\{|X(s)|>R\}}+\mathbf{1}_{\{|Y(s)|> R\}}\Big)
\nonumber \\
\!\!\!\!\!\!\!\!&&\cdot
\big(1+|X(s)|^{\kappa}+|Y(s)|^{\kappa}+\mathbb{E}|X(s)|^{\kappa}+\mathbb{E}|Y(s)|^{\kappa}\big)\big(|X(s)|+|Y(s)|\big)ds\Big]
\nonumber \\
\!\!\!\!\!\!\!\!&&+C\mathbb{E}\Big[\int_0^t\Big(\mathbf{1}_{\{|X(s)|>R\}}+\mathbf{1}_{\{|Y(s)|> R\}}\Big)
\nonumber \\
\!\!\!\!\!\!\!\!&&\cdot
\big(1+|X(s)|^2+|Y(s)|^2+\mathbb{E}|X(s)|^{2}+\mathbb{E}|Y(s)|^{2}\big)ds\Big]
\nonumber \\
\leq~~~\!\!\!\!\!\!\!\!&&C\mathbb{E}\Big[\int_0^t\Big(\mathbf{1}_{\{|X(s)|>R\}}+\mathbf{1}_{\{|Y(s)|> R\}}\Big)
\nonumber \\
\!\!\!\!\!\!\!\!&&\cdot
\big(1+|X(s)|^{\kappa+1}+|Y(s)|^{\kappa+1}+\mathbb{E}|X(s)|^{\kappa+1}+\mathbb{E}|Y(s)|^{\kappa+1}\big)ds\Big]
\nonumber \\
\leq~~~\!\!\!\!\!\!\!\!&&C\Big(\int_0^t\big(\mathbb{P}(|X(s)|>R)+\mathbb{P}(|Y(s)|>R)\big)ds\Big)^{\frac{1}{r}}
\nonumber \\
\!\!\!\!\!\!\!\!&&\cdot\Big(\int_0^t\big(1+\mathbb{E}|X(s)|^{\frac{(\kappa+1)r}{r-1}}+\mathbb{E}|Y(s)|^{\frac{(\kappa+1)r}{r-1}}\big)ds\Big)^{\frac{r-1}{r}}
\nonumber \\
\leq~~~\!\!\!\!\!\!\!\!&&C_T\Big(\int_0^t\big(\mathbb{P}(|X(s)|>R)
+\mathbb{P}(|Y(s)|>R)\big)ds\Big)^{\frac{1}{r}},
\end{eqnarray}
for some $r>1$.

Given a non-decreasing function $f:\mathbb{R}\to\mathbb{R}$ satisfying (\ref{eqe1}) and (\ref{eqe2}).  By Chebyshev's inequality, we have
\begin{equation}\label{eqc3}
\mathbb{P}(|X(s)|>R)+\mathbb{P}(|Y(s)|>R)\leq Ce^{-2rf(R)}\Big(\mathbb{E}e^{2rf(|X(s)|)}+\mathbb{E}e^{2rf(|Y(s)|)}\Big).
\end{equation}
Combining (\ref{eqc1})-(\ref{eqc3}), by Gronwall's lemma and (\ref{eqe1}) we can show that
\begin{eqnarray}\label{es21}
 \mathbb{E}|Z(t)|^2
\leq~~\!\!\!\!\!\!\!\!&& C_T\Big(\sup_{s\in[0,T]}\big(\mathbb{E} e^{2rf(|X(s)|)}+\mathbb{E} e^{2rf(|Y(s)|)}\big)\Big)^{\frac{1}{ r}}e^{-2\big(f(R)-\int_0^TK_s(R)ds\big)}
\nonumber \\
\leq~~~\!\!\!\!\!\!\!\!&&C_Te^{-2\big(f(R)-\int_0^TK_s(R)ds\big)}.
\end{eqnarray}
 Taking $R\to \infty$ on both sides of (\ref{es21}), by (\ref{eqe2}) we can get that for any $t\in[0,T]$,
$$\mathbb{E}|Z(t)|^2=0,$$
which together with the path continuity implies the pathwise uniqueness.      \\

\textbf{Case 3:}
Assume that the condition $(\mathbf{A2}''')$ holds. Define
\begin{equation}\label{eqph1}
\phi(t):=C(1+\mathbb{E}|X(t)|^{\kappa}+\mathbb{E}|Y(t)|^{\kappa}),
\end{equation}
where the constant $C$ is the same as in $(\mathbf{A2}''')$.

Applying It\^{o}'s formula and product rule, we can get that for any $t\in[0,T]$,
\begin{eqnarray*}
\!\!\!\!\!\!\!\!&&\exp\Big\{-\int_0^t\phi(s)ds\Big\}|X(t)-Y(t)|^2
\nonumber\\
\leq~~\!\!\!\!\!\!\!\!&&\int_0^t\exp\Big\{-\int_0^s\phi(r)dr\Big\}
\Big(\|\sigma(s,X(s),\mathcal{L}_{X(s)})-\sigma(s,Y(s),\mathcal{L}_{Y(s)})\|^2
\nonumber\\
\!\!\!\!\!\!\!\!&&~~~~+2\langle b(s,X(s),\mathcal{L}_{X(s)})-b(s,Y(s),\mathcal{L}_{Y(s)}),X(s)-Y(s)\rangle
-\phi(s)|X(s)-Y(s)|^2\Big)ds
\nonumber\\
\!\!\!\!\!\!\!\!&&+2\int_0^t\exp\Big\{-\int_0^s\phi(r)dr\Big\}
\langle X(s)-Y(s),\big(\sigma(s,X(s),\mathcal{L}_{X(s)})-\sigma(s,Y(s),\mathcal{L}_{Y(s)})\big)dW(s)\rangle.
\end{eqnarray*}
By (\ref{es2}) and taking expectation on both sides of the above inequality, we derive that
\begin{eqnarray*}
\!\!\!\!\!\!\!\!&&\mathbb{E}\Bigg\{\exp\Big\{-\int_0^t\phi(s)ds\Big\}|X(t)-Y(t)|^2\Bigg\}
\nonumber\\
\leq~~\!\!\!\!\!\!\!\!&&\int_0^t\exp\Big\{-\int_0^s\phi(r)dr\Big\}
\mathbb{E}\Big(\|\sigma(s,X(s),\mathcal{L}_{X(s)})-\sigma(s,Y(s),\mathcal{L}_{Y(s)})\|^2
\nonumber\\
\!\!\!\!\!\!\!\!&&~~~~+2\langle b(s,X(s),\mathcal{L}_{X(s)})-b(s,Y(s),\mathcal{L}_{Y(s)}),X(s)-Y(s)\rangle
-\phi(s)|X(s)-Y(s)|^2\Big)ds
\nonumber\\
=~~\!\!\!\!\!\!\!\!&&\int_0^t\exp\Big\{-\int_0^s\phi(r)dr\Big\}
\Big(\int_{\mathbb{R}^d\times\mathbb{R}^d}\|\sigma(s,x,\mathcal{L}_{X(s)})-\sigma(s,y,\mathcal{L}_{Y(s)})\|^2
\nonumber\\
\!\!\!\!\!\!\!\!&&~~~~+2\langle b(s,x,\mathcal{L}_{X(s)})-b(s,y,\mathcal{L}_{Y(s)}),x-y\rangle \pi(dx,dy)
\nonumber\\
\!\!\!\!\!\!\!\!&&~~~~
-\int_{\mathbb{R}^d\times\mathbb{R}^d}\phi(s)|x-y|^2\pi(dx,dy)\Big)ds
\nonumber\\
\leq~~\!\!\!\!\!\!\!\!&&\int_0^t\exp\Big\{-\int_0^s\phi(r)dr\Big\}
\Big(\int_{\mathbb{R}^d\times\mathbb{R}^d}\phi(s)|x-y|^2 \pi(dx,dy)
\nonumber\\
\!\!\!\!\!\!\!\!&&~~~~
-\int_{\mathbb{R}^d\times\mathbb{R}^d}\phi(s)|x-y|^2\pi(dx,dy)\Big)ds
\nonumber\\
=~~\!\!\!\!\!\!\!\!&&~~ 0,
\end{eqnarray*}
where $\pi\in\mathfrak{C}(\mathcal{L}_{X(s)},\mathcal{L}_{Y(s)})$.

Consequently, by (\ref{es2}) we deduce that $X(t)=Y(t),~\mathbb{P}\text{-a.s.},~t\in[0,T]$, thus the pathwise uniqueness follows from the path continuity.
\end{proof}

\vspace{3mm}
\textbf{Proof of Theorem \ref{th1}:} Combining with Propositions \ref{pro3}, \ref{pro4} and the modified Yamada-Watanabe theorem yields that MVSDE (\ref{eq1}) is strongly and weakly well-posed.  Now the proof is complete.

\section{Well-posedness of McKean-Vlasov SPDEs}\label{sec4}
The aim of this section is to extend the existence and uniqueness result to the case of  MVSPDE, which is applicable to various concrete SPDE models with interaction external force.
To this end, we first introduce some functional spaces and necessary notations.

Let $(U,\langle\cdot,\cdot\rangle_U)$ and $(H, \langle\cdot,\cdot\rangle_H) $ be  separable Hilbert spaces, and $H^*$ the dual space of $H$. Let $V$ denote a reflexive Banach space such that the embedding $V\subset H$ is continuous and dense. Identifying $H$ with its dual space by the Riesz isomorphism, then we have the so-called Gelfand triple
$$V\subset H(\cong H^*)\subset V^*.$$
The dualization between $V$ and $V^*$ is denoted by $_{V^*}\langle\cdot,\cdot\rangle_V$. Moreover, it is easy to see that $_{V^*}\langle\cdot,\cdot\rangle_V|_{H\times V}=\langle\cdot,\cdot\rangle_H$.

Let $\mathcal{P}(H)$ represent the space of all probability measures on $H$ equipped with the weak topology. Furthermore, we set for any $p>0$,
$$\mathcal{P}_p(H):=\Big\{\mu\in\mathcal{P}(H):\mu(\|\cdot\|_{H}^p):=\int_H\|\xi\|_H^p\mu(d\xi)<\infty\Big\}.$$
Then $\mathcal{P}_p(H)$ is a Polish space under the $L^p$-Wasserstein distance
$$\mathbb{W}_{p,H}(\mu,\nu):=\inf_{\pi\in\mathfrak{C}(\mu,\nu)}\Big(\int_{H\times H}\|\xi-\eta\|_H^p\pi(d\xi,d\eta)\Big)^{\frac{1}{p\vee1}},~\mu,\nu\in\mathcal{P}_p(H),$$
here $\mathfrak{C}(\mu,\nu)$ stands for the set of all couplings for  $\mu$ and $\nu$.
We also recall $\mathcal{P}_{\theta,T}(H)$, $\mathbb{W}_{2,T,R,H}$ are defined by (\ref{P1}), (\ref{P2}), respectively, with $H$ replacing $\mathbb{R}^d$, and
$$\tau_R^u:=\inf\Big\{t\in[0,T]:\|u(t)\|_{H}+\int_0^t\|u(s)\|_{V}^{\alpha}ds\geq R\Big\},$$
where the constant $\alpha$ will be determined later.
\subsection{Main results}\label{secin}
For the measurable maps
$$
A:[0,T]\times V\times\mathcal{P}(H)\rightarrow V^*,~~B:[0,T]\times V\times\mathcal{P}(H)\rightarrow L_2(U,H),
$$
we are interested in the following McKean-Vlasov stochastic evolution equation on  $H$,
\begin{equation}\label{eqSPDE}
dX(t)=A(t,X(t),\mathcal{L}_{X(t)})dt+B(t,X(t),\mathcal{L}_{X(t)})dW(t),
\end{equation}
where $\{W(t)\}_{t\in [0,T]}$ is an $U$-valued cylindrical Wiener process defined on a complete filtered probability space $\left(\Omega,\mathcal{F},\{\mathcal{F}(t)\}_{t\in[0,T]},\mathbb{P}\right)$.

To investigate the existence and uniqueness of solutions to MVSPDE (\ref{eqSPDE}), we assume that there are some constants $\alpha>1$, $C,\delta>0$, $\theta\geq 2$, $\beta\geq 0$ and a function $f(\cdot)\in L^1([0,T];[0,\infty))$ such that the following conditions hold for all $t\in[0,T]$.

\begin{enumerate}

\item [$({\mathbf{H}}{\mathbf{1}})$]\label{H1}
 $($Demicontinuity$)$ For any $v\in V$, the map
\begin{equation*}
V\times\mathcal{P}_2(H)\ni(u,\mu)\mapsto_{V^*}\langle A(t,u,\mu),v\rangle_V
\end{equation*}
is continuous.

\item [$({\mathbf{H}}{\mathbf{2}})$]\label{H2}
 $($Local Monotonicity$)$ For any $u,v\in V$, $\mu\in\mathcal{P}_{ 2 }(H)$, $\nu\in\mathcal{P}_{ \theta }(H)$,
\begin{eqnarray*}
\!\!\!\!\!\!\!\!&&2_{V^*}\langle A(t,u,\mu)-A(t,v,\nu),u-v\rangle_V+\|B(t,u,\mu)-B(t,v,\nu)\|_{L_2(U,H)}^2
\nonumber\\
\!\!\!\!\!\!\!\!&&\leq
\big(C+\rho(v)+C\nu(\|\cdot\|_H^{\theta})\big)\|u-v\|_H^2
+C\big(1+\nu(\|\cdot\|_H^{\theta})\big)\mathbb{W}_{2,H}(\mu,\nu)^2,
\end{eqnarray*}
where $\rho:V\to [0,\infty)$ is a measurable function and locally bounded in $V$.

\item [$({\mathbf{H}}{\mathbf{3}})$]\label{H3}
 $($Coercivity$)$ For any $u\in V$, $\mu\in\mathcal{P}_2(H)$,
\begin{equation*}
2_{V^*}\langle A(t,u,\mu),u\rangle_V+\delta\|u\|_V^\alpha\leq C\|u\|_H^2+C\mu(\|\cdot\|_H^2)+f(t).
\end{equation*}

\item [$({\mathbf{H}}{\mathbf{4}})$]\label{H4}
 $($Growth$)$ For any $u\in V$, $\mu\in\mathcal{P}_{\theta}(H)$,
\begin{equation*}
\|A(t,u,\mu)\|_{V^*}^{\frac{\alpha}{\alpha-1}}\leq \big(f(t)+C\|u\|_V^{\alpha}+C \mu(\|\cdot\|_H^{\theta} )\big)\big(1+\|u\|_{H}^{\beta}+\mu(\|\cdot\|_H^{\theta})\big)
\end{equation*}
and
\begin{equation*}
\|B(t,u,\mu)\|_{L_2(U,H)}^2\leq f(t)+C\|u\|_H^2+C\mu(\|\cdot\|_H^2).
\end{equation*}
\end{enumerate}

We recall the definition of variational solution to  (\ref{eqSPDE}).
\begin{definition}\label{de2}
We say a continuous $H$-valued $\{\mathcal{F}(t)\}_{t\in[0,T]}$-adapted process $\{X(t)\}_{t\in[0,T]}$ is a solution of (\ref{eqSPDE}), if for its $dt\times \mathbb{P}$-equivalent class $\hat{X}$
\begin{equation*}
\hat{X}\in L^\alpha\big([0,T]\times\Omega,dt\times\mathbb{P};V\big)\cap L^2\big([0,T]\times\Omega,dt\times\mathbb{P};H\big),
\end{equation*}
where $\alpha$ is the same as  in $(\mathbf{H3})$ and $\mathbb{P}$-a.s.
\begin{equation*}
X(t)=X(0)+\int_0^t A(s,\bar{X}(s),\mathcal{L}_{\bar{X}(s)})ds+\int_0^t B(s,\bar{X}(s),\mathcal{L}_{\bar{X}(s)})dW(s),~t\in[0,T],
\end{equation*}
where $\bar{X}$ is an $V$-valued progressively measurable $dt\times\mathbb{P}$-version of $\hat{X}$.
\end{definition}

Now we show the existence and uniqueness result to  MVSPDE (\ref{eqSPDE}).
\begin{theorem}\label{th2}
Suppose that the embedding $V\subset H$ is compact, $(\mathbf{H1})$-$(\mathbf{H4})$ hold for
$\sup_{t\in[0,T]}f(t)<\infty$
 and
\begin{equation}\label{es22}
\rho(v)\leq C(1+\|v\|_V^{\alpha})(1+\|v\|_H^{\beta}),~v\in V.
\end{equation}
Then for any $X(0)\in L^p(\Omega,\mathcal{F}(0),\mathbb{P};H)$ with $p\in(\eta,\infty)\cap[\beta+2,\infty)$, where
$\eta:=\frac{2(\alpha+\beta)(\alpha-1)}{\alpha}\vee \theta$,
MVSPDE (\ref{eqSPDE}) has a  solution in the sense of Definition \ref{de2}. Moreover,
\begin{equation}\label{es37}
\mathbb{E}\Big[\sup_{t\in[0,T]}\|X(t)\|_H^p\Big]+\mathbb{E}\int_0^T\|X(t)\|_V^{\alpha}dt+\mathbb{E}\int_0^T\|X(t)\|_H^{p-2}\|X(t)\|_V^{\alpha}dt<\infty.
\end{equation}

Furthermore, if one of the following conditions holds
\begin{enumerate}
\item [$({\mathbf{H}}{\mathbf{2}}')$]\label{H21}
 There exists $C>0$ such that for any $R>0$, $u,v\in \mathcal{C}_T\cap L^{\alpha}([0,T];V)$, $\mu,\nu\in \mathcal{P}_{\theta,T}(H)$  and $t\in[0,T\wedge\tau_R^{u}\wedge\tau_R^{v}]$,
\begin{eqnarray*}
\!\!\!\!\!\!\!\!&&_{V^*}\langle A(t,u(t),\mu(t))-A(t,v(t),\nu(t)),u(t)-v(t)\rangle_V
\nonumber\\
\leq~\!\!\!\!\!\!\!\!&&~~
\phi(t)\Big(\|u(t)-v(t)\|_H^2+\mathbb{W}_{2,T,R,H}(\mu_t,\nu_t)^2\Big)
\end{eqnarray*}
and
\begin{eqnarray*}
\!\!\!\!\!\!\!\!&&\|B(t,u(t),\mu(t))-B(t,v(t),\nu(t))\|_{L_2(U,H)}^2
\nonumber\\
\leq~\!\!\!\!\!\!\!\!&&~~
\phi(t)\Big(\|u(t)-v(t)\|_H^2+\mathbb{W}_{2,T,R,H}(\mu_t,\nu_t)^2\Big),
\end{eqnarray*}
where $\phi(t):=C+\rho_1(u(t))+\rho_2(v(t))+C\mu(t)(\|\cdot\|_H^{\theta})+C\nu(t)(\|\cdot\|_H^{\theta})$,
$\rho_1,\rho_2$ are  measurable functions and locally bounded in $V$, which satisfy (\ref{es22}).

\vspace{2mm}
\item [$({\mathbf{H}}{\mathbf{2}}'')$]\label{H22}
There exists $C>0$ such that for any $u,v\in V$, $\mu,\nu\in\mathcal{P}_{\theta}(H)$,
\begin{eqnarray*}
\!\!\!\!\!\!\!\!&&2_{V^*}\langle A(t,u,\mu)-A(t,v,\nu),u-v\rangle_V+\|B(t,u,\mu)-B(t,v,\nu)\|_{L_2(U,H)}^2
\nonumber\\
\!\!\!\!\!\!\!\!&&\leq
 C\big(1+\mu(\|\cdot\|_H^{\theta})+\nu(\|\cdot\|_H^{\theta})\big)\big(\|u-v\|_H^2+\mathbb{W}_{2,H}(\mu,\nu)^2\big),
\end{eqnarray*}
\end{enumerate}
then MVSPDE (\ref{eqSPDE}) has a unique solution in the sense of Definition \ref{de2} provided satisfying (\ref{es37}).

\end{theorem}

\begin{remark}
(i) The proof of the existence of solutions to (\ref{eqSPDE}) is given in Subsection \ref{sec3.3} and the uniqueness is given in Subsection \ref{sec3.4}.

(ii) Note that the condition $(\mathbf{H2})$ is a generalization of $(\mathbf{A2})$, which is stronger than $(\mathbf{A2})$ if $V=H=\mathbb{R}^d$. The main reason is that we utilize the so-called ``monotone method'' (cf.~\cite[Section 5]{LR1}) in the proof of the existence of solutions to MVSPDEs, it seems very difficult to employ this method under the form of condition $(\mathbf{A2})$. We mention that  $(\mathbf{H2})$ is suitable for  applications of confinement/interaction potentials in the infinite-dimensional case (see Remark \ref{remark6.2} (ii)).
\end{remark}
\subsection{Construction of approximating equations}
Choosing $\{e_1,e_2,\cdots\}\subset V$ as an orthonormal basis (ONB) on $H$. Consider the maps
$$\Pi^n:V^{*}\rightarrow H^n:=\text{span}\{e_1,e_2,\cdots\,e_n\},~n\in\mathbb{N},$$
by
$$\Pi^n x:=\sum\limits_{i=1}^{n}{}_{V^*}\langle x,e_i\rangle_{V}e_i,~x\in V^*.$$
It is straightforward that if we restrict $\Pi^n$ to $H$, denoted by $\Pi^n|_{H}$, then it is an orthogonal projection onto $H^n$ on $H$. Denote by $\{g_1,g_2,\cdots\}$ the ONB of $U$. Let
\begin{equation*}
W^{(n)}(t):=\widetilde{\Pi}^nW(t)=\sum\limits_{i=1}^{n}\langle W(t),g_i\rangle_{U}g_i,~n\in\mathbb{N},
\end{equation*}
where $\widetilde{\Pi}^n$ is an orthonormal projection onto $U^n:=\text{span}\{g_1,g_2,\cdots,g_n\}$ on $U$.

For any $n\in\mathbb{N}$, we consider the following stochastic equation on $H^n$
\begin{equation}\label{eqf}
dX^{(n)}(t)=\Pi^nA(t,X^{(n)}(t),\mathcal{L}_{X^{(n)}(t)})dt
+\Pi^nB(t,X^{(n)}(t),\mathcal{L}_{X^{(n)}(t)})dW^{(n)}(t),
\end{equation}
with initial value $X^{(n)}(0)=\Pi^nX(0)$.  Note that the coefficients of equation (\ref{eqf}) satisfy $(\mathbf{A1})$-$(\mathbf{A4})$.  Hence, according to Theorem \ref{th1}, for any $n\in\mathbb{N}$, it has a  strong solution that satisfies
\begin{equation}\label{eqpri}
\mathbb{E}\Big[\sup_{t\in[0,T]}\|X^{(n)}(t)\|_H^p\Big]\leq C(n)<\infty,
\end{equation}
where $p$ is defined in Theorem \ref{th2} and $C(n)$ is some constant that depends on $n$.

For the sake of convenience, we define the following notations
\begin{eqnarray*}
\!\!\!\!\!\!\!\!&&\mathbb{J}=L^2([0,T]\times\Omega;L_2(U,H));
\nonumber\\\vspace{2mm}
\!\!\!\!\!\!\!\!&&\mathbb{K}=L^\alpha([0,T]\times\Omega;V);
\nonumber\\\vspace{2mm}
\!\!\!\!\!\!\!\!&&\mathbb{K}^*=L^{\frac{\alpha}{\alpha-1}}([0,T]\times\Omega;V^*).
\end{eqnarray*}

\begin{lemma}\label{lem3}
Suppose that the assumptions in Theorem \ref{th2} hold, then for any $T>0$ there exists a constant $C_T>0$ such that for any $n\in\mathbb{N}$,
$$\|X^{(n)}\|_{\mathbb{K}}+\mathbb{E}\Big[\sup_{t\in[0,T]}\|X^{(n)}(t)\|_H^2\Big]\leq C_T\Big(1+\mathbb{E}\|X(0)\|_H^2\Big).$$
Furthermore,  for any $n\in\mathbb{N}$,
$$\|B(\cdot,X^{(n)}(\cdot),\mathcal{L}_{X^{(n)}(\cdot)})\|_{\mathbb{J}}\leq C_T\Big(1+\mathbb{E}\|X(0)\|_H^2\Big).$$
\end{lemma}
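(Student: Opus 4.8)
\textbf{Proof plan for Lemma \ref{lem3}.}
The plan is to derive the two a priori bounds directly from the It\^o formula applied to $\|X^{(n)}(t)\|_H^2$, exactly as in the classical variational setting, but keeping careful track of the extra terms coming from the measure dependence. First I would write, for the finite-dimensional SDE (\ref{eqf}) on $H^n$,
\begin{equation*}
\|X^{(n)}(t)\|_H^2=\|\Pi^nX(0)\|_H^2+\int_0^t\Big(2{}_{V^*}\langle A(s,X^{(n)}(s),\mathcal{L}_{X^{(n)}(s)}),X^{(n)}(s)\rangle_V+\|\Pi^nB(\cdot)\widetilde\Pi^n\|_{L_2(U,H)}^2\Big)ds+M^{(n)}(t),
\end{equation*}
where $M^{(n)}(t)$ is the stochastic integral term, and then bound the $ds$-integrand using the coercivity condition $(\mathbf{H3})$. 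The crucial point is that the nonlocal term $C\mathcal{L}_{X^{(n)}(s)}(\|\cdot\|_H^2)=C\,\mathbb{E}\|X^{(n)}(s)\|_H^2\le C\,\mathbb{E}[\sup_{u\le s}\|X^{(n)}(u)\|_H^2]$ is controlled by the very quantity we are estimating, so after taking $\sup_{t\le T}$ and then expectation it can be absorbed by Gronwall's lemma rather than obstructing it. Since $\|\Pi^n\|$ and $\|\widetilde\Pi^n\|$ are bounded by $1$, $\|\Pi^nX(0)\|_H\le\|X(0)\|_H\in L^2(\Omega)$ and $\|\Pi^nB\widetilde\Pi^n\|_{L_2(U,H)}\le\|B\|_{L_2(U,H)}$, so all projection factors are harmless.

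Next I would handle the stochastic term: by the Burkholder--Davis--Gundy inequality,
\begin{equation*}
\mathbb{E}\Big[\sup_{t\in[0,T]}|M^{(n)}(t)|\Big]\le C\,\mathbb{E}\Big[\Big(\int_0^T\|X^{(n)}(s)\|_H^2\,\|B(s,X^{(n)}(s),\mathcal{L}_{X^{(n)}(s)})\|_{L_2(U,H)}^2\,ds\Big)^{1/2}\Big]\le\tfrac12\mathbb{E}\Big[\sup_{t\in[0,T]}\|X^{(n)}(t)\|_H^2\Big]+C\,\mathbb{E}\int_0^T\|B(\cdot)\|_{L_2(U,H)}^2ds,
\end{equation*}
and the last integral is dominated, via the growth bound in $(\mathbf{H4})$, by $\int_0^Tf(s)ds+C\mathbb{E}\int_0^T(\|X^{(n)}(s)\|_H^2+\mathbb{E}\|X^{(n)}(s)\|_H^2)ds$. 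Collecting terms, using $\delta\|u\|_V^\alpha$ from $(\mathbf{H3})$ on the left, and applying Gronwall (with $f\in L^1$ and $\sup_{[0,T]}f<\infty$) yields simultaneously $\mathbb{E}[\sup_{t\le T}\|X^{(n)}(t)\|_H^2]\le C$ and $\delta\,\mathbb{E}\int_0^T\|X^{(n)}(s)\|_V^\alpha ds\le C$, i.e.\ $\|X^{(n)}\|_{\mathbb K}\le C$, both uniformly in $n$. The second assertion is then immediate: plugging these bounds back into the growth estimate for $B$ in $(\mathbf{H4})$ gives
\begin{equation*}
\|B(\cdot,X^{(n)}(\cdot),\mathcal{L}_{X^{(n)}(\cdot)})\|_{\mathbb J}^2=\mathbb{E}\int_0^T\|B(s,X^{(n)}(s),\mathcal{L}_{X^{(n)}(s)})\|_{L_2(U,H)}^2ds\le\int_0^Tf(s)ds+C\,\mathbb{E}\int_0^T\|X^{(n)}(s)\|_H^2ds\le C.
\end{equation*}

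The step I expect to require the most care is not any single estimate but the bookkeeping of the distribution-dependent terms: one must verify at each stage that every occurrence of $\mathcal{L}_{X^{(n)}(s)}(\|\cdot\|_H^2)=\mathbb{E}\|X^{(n)}(s)\|_H^2$ is genuinely controlled by $\mathbb{E}[\sup_{u\le s}\|X^{(n)}(u)\|_H^2]$ (so that Gronwall applies to the single scalar function $t\mapsto\mathbb{E}[\sup_{u\le t}\|X^{(n)}(u)\|_H^2]$), and that the higher moment $p$ is not actually needed here — the $L^2$-in-$H$ bound suffices for this lemma, with the $L^p$ bound (\ref{es37}) postponed to a later step. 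A minor technical point worth stating explicitly is that It\^o's formula is legitimately applicable because (\ref{eqf}) is a genuine finite-dimensional SDE with coefficients satisfying $(\mathbf{A1})$--$(\mathbf{A4})$, so $X^{(n)}$ is already known (via Theorem \ref{th1}) to be a well-defined process with finite moments, and the computation above is then just an a priori estimate made uniform in $n$.
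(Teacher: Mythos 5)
Your argument is correct and is exactly the standard Itô-formula/BDG/Gronwall estimate that the paper itself invokes (it omits the proof of this lemma, pointing to \cite{LR2} and \cite[Theorem 2.1]{HL1}, and carries out the analogous computation with stopping times in the $p$-th moment estimate of Lemma \ref{lem4}). Your handling of the nonlocal term $\mathcal{L}_{X^{(n)}(s)}(\|\cdot\|_H^2)=\mathbb{E}\|X^{(n)}(s)\|_H^2$ and your justification of the absorption step via the finite moments of each finite-dimensional Galerkin solution (Theorem \ref{th1}) are the right points to make explicit, so nothing essential is missing.
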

\begin{proof}
The proof is standard and follows from It\^{o}'s formula,  one could refer to \cite{LR2} or \cite[Theorem 2.1]{HL1}, thus we omit the details.
\end{proof}

\vspace{1mm}
Besides Lemma \ref{lem3}, we can also get the following a priori estimates.
\begin{lemma}\label{lem4}
Suppose that the assumptions in Theorem \ref{th2} hold, then for any $T>0$ there exists a constant $C_T>0$ such that for any $n\in\mathbb{N}$,
\begin{equation*}
\mathbb{E}\Big[\sup_{t\in[0,T]}\|X^{(n)}(t)\|_H^p\Big]+\mathbb{E}\int_0^T\|X^{(n)}(t)\|_H^{p-2}\|X^{(n)}(t)\|_V^{\alpha}dt
\leq C_T\Big(1+\mathbb{E}\|X(0)\|_H^p\Big).
\end{equation*}
Furthermore,  for any $n\in\mathbb{N}$,
$$\|A(\cdot,X^{(n)}(\cdot),\mathcal{L}_{X^{(n)}(\cdot)})\|_{\mathbb{K}^*}\leq C_T\Big(1+\mathbb{E}\|X(0)\|_H^p\Big).$$
\end{lemma}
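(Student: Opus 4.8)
The plan is to obtain both assertions from It\^o's formula applied to $t\mapsto\|X^{(n)}(t)\|_H^p$: the coercivity condition $(\mathbf{H3})$ will produce the moment bound (and with it the coercive term $\|X^{(n)}\|_H^{p-2}\|X^{(n)}\|_V^{\alpha}$), and then the growth condition $(\mathbf{H4})$ will give the bound on $A$. Lemma \ref{lem3} supplies the $L^2$-in-$H$ and $\mathbb K$ bounds that we combine with these. Since the distribution enters only through $\mathcal L_{X^{(n)}(s)}(\|\cdot\|_H^q)=\mathbb E\|X^{(n)}(s)\|_H^q$, a deterministic function of $s$, the nonlocality causes no difficulty at this level.

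First I would fix $n$ and, for $N>0$, introduce the stopping times $\tau_N:=\inf\{t\in[0,T]:\|X^{(n)}(t)\|_H\geq N\}\wedge T$, which increase to $T$; by continuity of $X^{(n)}$ the stopped process is bounded, so all quantities below are finite and expectations may be taken freely. Since $X^{(n)}$ takes values in the finite-dimensional space $H^n$, It\^o's formula applies to $(\|X^{(n)}(\cdot\wedge\tau_N)\|_H^2)^{p/2}$. Using that $\Pi^n$ is the orthogonal projection onto $H^n$ in $H$ --- so that ${}_{V^*}\langle\Pi^nA,X^{(n)}\rangle_V={}_{V^*}\langle A,X^{(n)}\rangle_V$, $\|\Pi^nB\|_{L_2(U,H)}\leq\|B\|_{L_2(U,H)}$, $\|(\Pi^nB)^*X^{(n)}\|_U\leq\|B\|_{L_2(U,H)}\|X^{(n)}\|_H$ and $\|\Pi^nX(0)\|_H\leq\|X(0)\|_H$ --- the second-order It\^o term is dominated by $C_p\|X^{(n)}(s)\|_H^{p-2}\|B(s,X^{(n)}(s),\mathcal L_{X^{(n)}(s)})\|_{L_2(U,H)}^2$. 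Inserting $(\mathbf{H3})$ moves $\tfrac{\delta p}{2}\|X^{(n)}(s)\|_H^{p-2}\|X^{(n)}(s)\|_V^{\alpha}$ to the left and leaves on the right $C\|X^{(n)}(s)\|_H^{2}$, $C\,\mathbb E\|X^{(n)}(s)\|_H^{2}$ and $f(s)$, each multiplied by $\|X^{(n)}(s)\|_H^{p-2}$. Young's inequality ($\|u\|_H^{p-2}\|u\|_H^2=\|u\|_H^p$, $\|u\|_H^{p-2}\,\mathbb E\|u\|_H^2\leq C\|u\|_H^p+C\,\mathbb E\|u\|_H^p$ via Jensen, and $\|u\|_H^{p-2}f(s)\leq Cf(s)+Cf(s)^{p/2}$, with $f,f^{p/2}\in L^1([0,T])$ since $\sup_tf(t)<\infty$), then $\sup_{t\leq T'}$, then $\mathbb E$, then Burkholder--Davis--Gundy for the stochastic integral and absorption of $\tfrac12\mathbb E\sup_{t\leq T'}\|X^{(n)}(t\wedge\tau_N)\|_H^p$ into the left-hand side, yields
\begin{align*}
\mathbb E\Big[\sup_{t\leq T'}\|X^{(n)}(t\wedge\tau_N)\|_H^p\Big]&+\mathbb E\int_0^{T'\wedge\tau_N}\|X^{(n)}(s)\|_H^{p-2}\|X^{(n)}(s)\|_V^{\alpha}\,ds\\
&\leq C\big(1+\mathbb E\|X(0)\|_H^p\big)+C\int_0^{T'}(1+f(s))\,\mathbb E\Big[\sup_{u\leq s}\|X^{(n)}(u\wedge\tau_N)\|_H^p\Big]ds.
\end{align*}
Gronwall's lemma (with $1+f\in L^1$) bounds $\mathbb E\sup_{t\leq T}\|X^{(n)}(t\wedge\tau_N)\|_H^p$ by $C_T(1+\mathbb E\|X(0)\|_H^p)$ uniformly in $N$ and $n$; feeding this back bounds the integral term as well, and letting $N\to\infty$ with Fatou's lemma gives the first assertion.

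For the bound on $A$, I would apply $(\mathbf{H4})$ with $u=X^{(n)}(s)$, $\mu=\mathcal L_{X^{(n)}(s)}$, so that $\mu(\|\cdot\|_H^{\theta})=\mathbb E\|X^{(n)}(s)\|_H^{\theta}$ is, by $\theta<p$ and Jensen, bounded uniformly in $s$ and $n$ thanks to the moment estimate just proved. Expanding the product on the right of $(\mathbf{H4})$, every resulting term is integrable over $[0,T]\times\Omega$ uniformly in $n$: the $f(s)$-terms use $\sup_tf(t)<\infty$ together with the uniform $\beta$-th and $\theta$-th moment bounds; the terms $C\|X^{(n)}\|_V^{\alpha}$ and $C\|X^{(n)}\|_V^{\alpha}\,\mathbb E\|X^{(n)}\|_H^{\theta}$ use $\|X^{(n)}\|_{\mathbb K}\leq C$ from Lemma \ref{lem3}; and the decisive cross term $C\|X^{(n)}\|_V^{\alpha}\|X^{(n)}\|_H^{\beta}$ is handled via $\|X^{(n)}\|_H^{\beta}\leq 1+\|X^{(n)}\|_H^{p-2}$ --- here the hypothesis $p\geq\beta+2$ is exactly what is needed --- together with Lemma \ref{lem3} and the first assertion, which control $\mathbb E\int_0^T\|X^{(n)}\|_V^{\alpha}ds$ and $\mathbb E\int_0^T\|X^{(n)}\|_H^{p-2}\|X^{(n)}\|_V^{\alpha}ds$ respectively. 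Summing up gives $\mathbb E\int_0^T\|A(s,X^{(n)}(s),\mathcal L_{X^{(n)}(s)})\|_{V^*}^{\alpha/(\alpha-1)}ds\leq C$, i.e. $A(\cdot,X^{(n)}(\cdot),\mathcal L_{X^{(n)}(\cdot)})$ is bounded in $\mathbb K^*$ uniformly in $n$.

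The underlying computation is routine variational-method bookkeeping; the two points that need care are the localization --- Lemma \ref{lem3} supplies only the second moment and the $\mathbb K$-bound, not the $p$-th moment, so one must work on $[0,\tau_N]$ and pass to the limit --- and the verification that the precise restrictions on the moment exponent ($p\geq\beta+2$, and $p>\eta\geq\theta$) are exactly what make the cross term $\|\cdot\|_V^{\alpha}\|\cdot\|_H^{\beta}$ and the distribution moment $\mu(\|\cdot\|_H^{\theta})$ absorbable. \hspace{\fill}$\Box$
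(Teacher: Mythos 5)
Your argument is correct and is essentially the paper's own proof: Itô's formula for $\|X^{(n)}\|_H^p$ combined with $(\mathbf{H3})$, localization by stopping times, Burkholder--Davis--Gundy and Gronwall for the first assertion, and then $(\mathbf{H4})$ together with the exponent conditions $p\geq\beta+2$ and $p>\eta\geq\theta$ for the bound on $A$ (which, as you correctly state, is a bound in $\mathbb{K}^*$; the $\mathbb{K}$ in the lemma is a typo). The only cosmetic differences are that you run Gronwall at the level of the stopped process before letting the localization parameter tend to infinity (the paper passes to the limit first via monotone convergence), and that you spell out the absorption of the cross term $\|\cdot\|_V^{\alpha}\|\cdot\|_H^{\beta}$ which the paper leaves implicit.
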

\begin{proof}
Applying It\^{o}'s formula, we can get that for any $t\in[0,T]$,
\begin{eqnarray}\label{es24}
\|X^{(n)}(t)\|_H^p
=~~\!\!\!\!\!\!\!\!&&\|X^{(n)}(0)\|_H^p+\frac{p(p-2)}{2}  \int_0^t\|X^{(n)}(s)\|_H^{p-4}
\nonumber \\
\!\!\!\!\!\!\!\!&&
~~~~~~~~~~~~~~~~~~~~~~\cdot\|\big(\Pi^nB(s,X^{(n)}(s),\mathcal{L}_{X^{(n)}(s)})\tilde{\Pi}^n\big)^*X^{(n)}(s)\|_U^2ds
\nonumber \\
\!\!\!\!\!\!\!\!&&+\frac{p}{2}\int_0^t\|X^{(n)}(s)\|_H^{p-2}\big(2{}_{V^*}\langle A(s,X^{(n)}(s),\mathcal{L}_{X^{(n)}(s)}),X^{(n)}(s)\rangle_V
\nonumber \\
\!\!\!\!\!\!\!\!&&
~~~~~~~~~~~~~~~~~~~~~~~~~~~~+\|\Pi^nB(s,X^{(n)}(s),\mathcal{L}_{X^{(n)}(s)})\tilde{\Pi}^n\|_{L_2(U,H)}^2\big)ds
\nonumber \\
\!\!\!\!\!\!\!\!&&+p\int_0^t\|X^{(n)}(s)\|_H^{p-2}\langle X^{(n)}(s),\Pi^nB(s,X^{(n)}(s),\mathcal{L}_{X^{(n)}(s)})dW^{(n)}(s)\rangle_H
\nonumber \\
\leq~~\!\!\!\!\!\!\!\!&&\|X^{(n)}(0)\|_H^p-\frac{p\delta}{2}\int_0^t\|X^{(n)}(s)\|_H^{p-2}\|X^{(n)}(s)\|_V^{\alpha}ds
\nonumber \\
\!\!\!\!\!\!\!\!&&
+C\int_0^t\big(\|X^{(n)}(s)\|_H^p+\mathbb{E}\|X^{(n)}(s)\|_H^p+f^{\frac{p}{2}}(s)\big)ds
\nonumber \\
\!\!\!\!\!\!\!\!&&+p\int_0^t\|X^{(n)}(s)\|_H^{p-2}\langle X^{(n)}(s),\Pi^nB(s,X^{(n)}(s),\mathcal{L}_{X^{(n)}(s)})dW^{(n)}(s)\rangle_H.
\end{eqnarray}
Using  Burkholder-Davis-Gundy's inequality, we obtain
\begin{eqnarray}\label{es25}
\!\!\!\!\!\!\!\!&&\mathbb{E}\Big[\sup_{t\in[0,T]}\Big|\int_0^t\|X^{(n)}(s)\|_H^{p-2}\langle X^{(n)}(s),\Pi^nB(s,X^{(n)}(s),\mathcal{L}_{X^{(n)}(s)})dW^{(n)}(s)\rangle_H\Big|\Big]
\nonumber \\
\!\!\!\!\!\!\!\!&&\leq~C\mathbb{E}\Big[\int_0^{T}\|X^{(n)}(s)\|_H^{2p-2}\|B(s,X^{(n)}(s),\mathcal{L}_{X^{(n)}(s)})\|_{L_2(U,H)}^2ds\Big]^{\frac{1}{2}}
\nonumber \\
\!\!\!\!\!\!\!\!&&\leq~C\mathbb{E}\Big[\int_0^{T}\big(\|X^{(n)}(t)\|_H^p+\mathbb{E}\|X^{(n)}(t)\|_H^p+f^{\frac{p}{2}}(t)\big)dt\Big]
\nonumber \\
\!\!\!\!\!\!\!\!&&~~~
+\frac{1}{2}\mathbb{E}\Big[\sup_{t\in[0,T]} \|X^{(n)}(t)\|_H^p\Big].
\end{eqnarray}
Combining (\ref{eqpri})-(\ref{es25}),  we have
\begin{eqnarray}\label{es26}
\!\!\!\!\!\!\!\!&&\mathbb{E}\Big[\sup_{t\in[0,T]}\|X^{(n)}(t)\|_H^p\Big]+\frac{p\delta}{2}\mathbb{E}\int_0^{T}\|X^{(n)}(t)\|_H^{p-2}\|X^{(n)}(t)\|_V^{\alpha}dt
\nonumber \\
\!\!\!\!\!\!\!\!&&\leq \mathbb{E}\|X(0)\|_H^p+C\mathbb{E}\int_0^{T}\|X^{(n)}(t)\|_H^pdt+C\int_0^{T}f^{\frac{p}{2}}(t)dt.
\end{eqnarray}
Then by Gronwall's lemma, it leads to
$$\mathbb{E}\Big[\sup_{t\in[0,T]}\|X^{(n)}(t)\|_H^p\Big]+\mathbb{E}\int_0^{T}\|X^{(n)}(t)\|_H^{p-2}\|X^{(n)}(t)\|_V^{\alpha}dt\leq C_T\big(1+\mathbb{E}\|X(0)\|_H^p\big).$$
Furthermore, due to $(\mathbf{H4})$ and $p\in(\eta,\infty)\cap[\beta+2,\infty)$, we deduce that
$$\|A(\cdot,X^{(n)}(\cdot),\mathcal{L}_{X^{(n)}(\cdot)})\|_{\mathbb{K}^*}\leq C_T.$$
Hence, we complete the proof.
\end{proof}

\begin{lemma}\label{lem5}
Suppose that the assumptions in Theorem \ref{th2} hold. Then $\{X^{(n)}\}_{n\in\mathbb{N}}$ is tight in the space $C([0,T];V^*)\cap L^{\alpha}([0,T];H)$.
\end{lemma}
\begin{proof}
Since the embedding $V\subset H$ is compact,  the proof is similar to that of Lemma 2.12 in \cite{RSZ1}, thus we omit the details.
\end{proof}
\subsection{Existence of variational solutions}\label{sec3.3}
In the following, we intend to show the existence of (variational) solutions to (\ref{eqSPDE}).

\begin{proposition}\label{pro5}
Suppose that all assumptions in Theorem \ref{th2}  hold. Then for any $X(0)\in L^p(\Omega,\mathcal{F}_0,\mathbb{P};H)$ with $p\in(\eta,\infty)\cap[\beta+2,\infty)$,  there exists a (variational) solution to (\ref{eqSPDE}) in the sense of Definition \ref{de2} and satisfies
\begin{equation*}
\mathbb{E}\Big[\sup_{t\in[0,T]}\|X(t)\|_H^p\Big]+\mathbb{E}\int_0^T\|X(t)\|_V^{\alpha}dt+\mathbb{E}\int_0^T\|X(t)\|_H^{p-2}\|X(t)\|_V^{\alpha}dt<\infty.
\end{equation*}
\end{proposition}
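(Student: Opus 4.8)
The plan is to pass the Galerkin scheme $\{X^{(n)}\}$ from (\ref{eqf}) to the limit by the monotonicity (Minty--Browder) method, adapted to the nonlocal distribution dependence. From Lemmas \ref{lem3} and \ref{lem4} and reflexivity, I would first extract a subsequence (not relabelled) along which $X^{(n)}\rightharpoonup\bar X$ in $\mathbb{K}$, $X^{(n)}\overset{*}{\rightharpoonup}X$ in $L^p(\Omega;L^\infty([0,T];H))$ (hence $X^{(n)}\rightharpoonup X$ in $L^2([0,T]\times\Omega;H)$), $\Pi^nA(\cdot,X^{(n)},\mathcal{L}_{X^{(n)}})\rightharpoonup\mathcal{A}$ in $\mathbb{K}^*$, $\Pi^nB(\cdot,X^{(n)},\mathcal{L}_{X^{(n)}})\widetilde{\Pi}^n\rightharpoonup\mathcal{B}$ in $\mathbb{J}$, and $X^{(n)}(T)\rightharpoonup\zeta$ in $L^2(\Omega;H)$. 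Testing (\ref{eqf}) against $e_i\varphi$ with $\varphi\in C^\infty_c([0,T))$ and using $\Pi^n\to\mathrm{Id}$, one checks in the usual way that $\widehat X(t):=X(0)+\int_0^t\mathcal{A}(s)\,ds+\int_0^t\mathcal{B}(s)\,dW(s)$ is a $dt\times\mathbb{P}$-version of $\bar X$, admits an $H$-continuous modification, satisfies $\widehat X(T)=\zeta$, and coincides $dt\times\mathbb{P}$-a.e. with $X$; the estimate (\ref{es37}) then follows from weak lower semicontinuity of the $H$- and $V$-norms together with Lemma \ref{lem4}. It remains to identify $\mathcal{A}=A(\cdot,\bar X,\mathcal{L}_{\bar X})$ and $\mathcal{B}=B(\cdot,\bar X,\mathcal{L}_{\bar X})$.

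For the identification I would fix a test process $\phi\in\mathbb{K}\cap L^\infty([0,T]\times\Omega;H)$, set $g_\phi(t):=C+\rho(\phi(t))+C\,\mathcal{L}_{\phi(t)}(\|\cdot\|_H^\theta)$ -- which is in $L^1([0,T])$ $\mathbb{P}$-a.s.\ by (\ref{es22}) and the boundedness of $\phi$ in $\mathbb{K}$ and in $H$ -- and $\psi(t):=\exp(-\int_0^tg_\phi(s)\,ds)$. Applying It\^{o}'s formula to $\psi(t)\|X^{(n)}(t)\|_H^2$ and to $\psi(t)\|\widehat X(t)\|_H^2$, invoking $(\mathbf{H2})$ with $u=X^{(n)}(t)$, $v=\phi(t)$, $\mu=\mathcal{L}_{X^{(n)}(t)}$, $\nu=\mathcal{L}_{\phi(t)}$, and letting $n\to\infty$ while using the weak convergences above together with $\liminf_n\mathbb{E}[\psi(T)\|X^{(n)}(T)\|_H^2]\geq\mathbb{E}[\psi(T)\|\widehat X(T)\|_H^2]$ and $\mathbb{E}\|X^{(n)}(0)\|_H^2=\mathbb{E}\|\Pi^nX(0)\|_H^2\to\mathbb{E}\|X(0)\|_H^2$, one arrives at the monotonicity inequality $\mathbb{E}\int_0^T\psi(t)\big(2\,{}_{V^*}\langle\mathcal{A}(t)-A(t,\phi(t),\mathcal{L}_{\phi(t)}),\bar X(t)-\phi(t)\rangle_V+\|\mathcal{B}(t)-B(t,\phi(t),\mathcal{L}_{\phi(t)})\|_{L_2(U,H)}^2-g_\phi(t)\|\bar X(t)-\phi(t)\|_H^2\big)\,dt\leq 0$. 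Then Minty's trick: after a localization making $\phi=\bar X-\varepsilon\Psi$ admissible for $\Psi\in\mathbb{K}\cap L^\infty([0,T]\times\Omega;H)$, dividing by $\varepsilon$, letting $\varepsilon\downarrow0$ and using the demicontinuity $(\mathbf{H1})$ (and $\mathbb{W}_{2,H}(\mathcal{L}_{\bar X-\varepsilon\Psi},\mathcal{L}_{\bar X})\to0$), one gets $\mathbb{E}\int_0^T\psi(t)\,{}_{V^*}\langle\mathcal{A}(t)-A(t,\bar X(t),\mathcal{L}_{\bar X(t)}),\Psi(t)\rangle_V\,dt=0$ for all such $\Psi$, whence $\mathcal{A}=A(\cdot,\bar X,\mathcal{L}_{\bar X})$; choosing $\Psi$ so that this first-order term drops then forces $\|\mathcal{B}-B(\cdot,\bar X,\mathcal{L}_{\bar X})\|_{L_2(U,H)}=0$ as well. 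Together with the representation of $\widehat X$ this proves that $X$ is a variational solution satisfying the asserted bound.

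The genuinely new difficulty -- absent from the classical variational setting of \cite{KR,LR2,LR13} -- is the nonlocal measure dependence: $(\mathbf{H2})$ also produces the term $C(1+\mathcal{L}_{\phi(t)}(\|\cdot\|_H^\theta))\,\mathbb{W}_{2,H}(\mathcal{L}_{X^{(n)}(t)},\mathcal{L}_{\phi(t)})^2$, which does not vanish under the mere weak convergence $X^{(n)}\rightharpoonup\bar X$. My plan for this is twofold. First, bounding $\mathbb{W}_{2,H}(\mathcal{L}_{X^{(n)}(t)},\mathcal{L}_{\phi(t)})^2\leq\mathbb{E}\|X^{(n)}(t)-\phi(t)\|_H^2$ via the canonical coupling turns this extra term into one of exactly the same type as the $\rho(\phi)\|X^{(n)}-\phi\|_H^2$ term already carried by the exponential weight, so it can be absorbed by enlarging $g_\phi$ to $g_\phi(t)+C(1+\mathcal{L}_{\phi(t)}(\|\cdot\|_H^\theta))$ and applying Fubini, the enlarged coefficient being deterministic in $t$. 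Second, to pass to the limit inside $A(\cdot,X^{(n)},\mathcal{L}_{X^{(n)}})$ one still needs $\int_0^T\mathbb{W}_{2,H}(\mathcal{L}_{X^{(n)}(t)},\mathcal{L}_{\bar X(t)})^2\,dt\to0$, i.e.\ convergence of the time-marginals; I would obtain this by upgrading $X^{(n)}\rightharpoonup\bar X$ to strong convergence in $L^2([0,T]\times\Omega;H)$, e.g.\ through the Gelfand-triple estimate $\|w\|_H^2\leq\|w\|_{V^*}\|w\|_V$ combined with the uniform $\mathbb{K}$-bound and a compactness/Cauchy argument for $\{X^{(n)}\}$ in $L^{\frac{\alpha}{\alpha-1}}([0,T]\times\Omega;V^*)$ coming from the equation, or, where such compactness is not directly available, by a Gronwall estimate for $t\mapsto\mathbb{E}\int_0^t\mathbb{W}_{2,H}(\mathcal{L}_{X^{(n)}(s)},\mathcal{L}_{\bar X(s)})^2\,ds$ based on $(\mathbf{H2})$. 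I expect this marginal-convergence step to be the main obstacle; the remainder is the Krylov--Rozovskii--Liu--R\"{o}ckner machinery with the extra Wasserstein term carried along.
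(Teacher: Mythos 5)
Your first two paragraphs, together with the first half of your third, reproduce the paper's argument for Proposition \ref{pro5} almost exactly: Galerkin approximation, weak limits $\bar X, Y, Z$ in $\mathbb{K}$, $\mathbb{K}^*$, $\mathbb{J}$, an It\^o/monotonicity inequality weighted by $e^{-\int_0^t(2C+2C\mathbb{E}\|\phi(s)\|_H^\theta+\rho(\phi(s)))ds}$, absorption of the Wasserstein term via the synchronous coupling bound $\mathbb{W}_{2,H}(\mathcal{L}_{X^{(n)}(t)},\mathcal{L}_{\phi(t)})^2\leq\mathbb{E}\|X^{(n)}(t)-\phi(t)\|_H^2$ together with the deterministic compensator $2C\mathbb{E}\|\phi(t)\|_H^\theta$ (this is precisely (\ref{es28})--(\ref{es29}) in the paper), lower semicontinuity, and Minty's trick with $\phi=\bar X-\varepsilon\tilde\phi v$ using $(\mathbf{H1})$. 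One small technical caveat you share with the paper: the ``Fubini'' cancellation of $\mathbb{E}\big[\psi(s)c(s)(\mathbb{E}\|X^{(n)}-\phi\|_H^2-\|X^{(n)}-\phi\|_H^2)\big]$ is not immediate because the exponential weight $\psi$ is random (it contains $\rho(\phi)$); the paper deals with this by splitting according to the sign of $\mathbb{E}\|X^{(n)}-\phi\|_H^2-\|X^{(n)}-\phi\|_H^2$ and using $0<\psi\leq1$, and your write-up should make the analogous step explicit.

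The genuine problem is your final paragraph. The ``second'' difficulty you identify --- needing $\int_0^T\mathbb{W}_{2,H}(\mathcal{L}_{X^{(n)}(t)},\mathcal{L}_{\bar X(t)})^2dt\to0$, hence strong convergence of $X^{(n)}$ in $L^2([0,T]\times\Omega;H)$ --- is a phantom obstacle, and the fixes you sketch for it would not go through. The monotonicity method never requires passing to the limit \emph{inside} $A(\cdot,X^{(n)},\mathcal{L}_{X^{(n)}})$ or comparing $\mathcal{L}_{X^{(n)}}$ with $\mathcal{L}_{\bar X}$: once $(\mathbf{H2})$ is applied with the fixed test process $v=\phi$, every remaining $n$-dependent term in (\ref{es28}) is \emph{linear} in the triple $\big(X^{(n)},A(\cdot,X^{(n)},\mathcal{L}_{X^{(n)}}),B(\cdot,X^{(n)},\mathcal{L}_{X^{(n)}})\big)$, paired against quantities built only from $\phi$, so weak convergence in $\mathbb{K}$, $\mathbb{K}^*$, $\mathbb{J}$ suffices to pass to the limit and obtain (\ref{es31})--(\ref{es32}). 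The only Wasserstein distance that must be shown to vanish is $\mathbb{W}_{2,H}(\mathcal{L}_{\bar X(s)-\varepsilon\tilde\phi(s)v},\mathcal{L}_{\bar X(s)})\leq\varepsilon\|\tilde\phi\|_\infty\|v\|_H$ in the Minty step, which is trivial. By contrast, your proposed routes are unsound: Galerkin approximations in this variational setting are not known to converge strongly in $L^2([0,T]\times\Omega;H)$ (there is no Aubin--Lions compactness at fixed $\omega$ without a Skorokhod-type argument, and the interpolation $\|w\|_H^2\leq\|w\|_{V^*}\|w\|_V$ needs strong $V^*$-convergence that you have not established), and a Gronwall estimate for $\mathbb{E}\int_0^t\mathbb{W}_{2,H}(\mathcal{L}_{X^{(n)}(s)},\mathcal{L}_{\bar X(s)})^2ds$ based on $(\mathbf{H2})$ would require exactly the pathwise comparison under merely locally monotone coefficients that the whole construction is designed to avoid. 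Deleting this detour leaves a correct proof coinciding with the paper's.
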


\begin{proof}
Set
$$\Upsilon:=[C([0,T];V^*)\cap L^{\alpha}([0,T];H)]\times C([0,T];U_1),$$
where $U_1$ is a Hilbert space such that the embedding $U\subset U_1$ is Hilbert-Schmidt.
By Lemma \ref{lem5} and the generalized Skorohod representation theorem, there exists a  filtered probability space $(\tilde{\Omega},\tilde{\mathcal{F}},\tilde{\mathcal{F}}(t),\tilde{\mathbb{P}})$ and $\Upsilon$-valued random vectors $\{(\tilde{X}^{(n)},\tilde{W}^{(n)})\}_{n\in\mathbb{N}}$ and $(\tilde{X},\tilde{W})$ such that
\begin{eqnarray}
\!\!\!\!\!\!\!\!&&(i)~\tilde{W}^{(n)}=\tilde{W}~\text{for any}~ n\in\mathbb{N},~~\tilde{\mathbb{P}}\text{-a.s.};
\nonumber\\
\!\!\!\!\!\!\!\!&&(ii)~\mathcal{L}_{(\tilde{X}^{(n)},\tilde{W}^{(n)})}=\mathcal{L}_{(\tilde{X},\tilde{W})};\label{conver0}
\\
\!\!\!\!\!\!\!\!&&(iii)~\|\tilde{X}^{(n)}-\tilde{X}\|_{L^{\alpha}([0,T];H)}+\|\tilde{X}^{(n)}-\tilde{X}\|_{C([0,T];V^*)}\to 0,~~\tilde{\mathbb{P}}\text{-a.s.}. \label{conver1}
\end{eqnarray}

In the following, we will show that $(\tilde{X},\tilde{W})$ is a solution of (\ref{eqSPDE}). Note that by Lemmas \ref{lem3}, \ref{lem4} and (\ref{conver0}),  we have
\begin{equation}\label{es20}
\sup_{n\in\mathbb{N}}\Big\{\tilde{\mathbb{E}}\Big[\sup_{t\in[0,T]}\|\tilde{X}^{(n)}(t)\|_H^p\Big]+\mathbb{E}\int_0^T\|\tilde{X}^{(n)}(t)\|_V^{\alpha}dt+\mathbb{E}\int_0^T\|\tilde{X}^{(n)}(t)\|_H^{p-2}\|\tilde{X}^{(n)}(t)\|_V^{\alpha}dt\Big\}<\infty,
\end{equation}
then by the  lower semicontinuity, we deduce that
\begin{equation}\label{es38}
\tilde{\mathbb{E}}\Big[\sup_{t\in[0,T]}\|\tilde{X}(t)\|_H^p\Big]+\mathbb{E}\int_0^T\|\tilde{X}(t)\|_V^{\alpha}dt+\mathbb{E}\int_0^T\|\tilde{X}(t)\|_H^{p-2}\|\tilde{X}(t)\|_V^{\alpha}dt<\infty.
\end{equation}

As a consequence of (\ref{es20}) and Lemma \ref{lem4}, there exists a subsequences of $n$, denoted again by $n$, such that for $n\rightarrow\infty$,
\begin{eqnarray*}
\!\!\!\!\!\!\!\!&&(i)~\tilde{X}^{(n)}\rightarrow\bar{X}~\text{weakly in~} \mathbb{K}~\text{and weakly star in}~L^p(\Omega;L^{\infty}([0,T];H));
\nonumber\\
\!\!\!\!\!\!\!\!&&(ii)~Y^{(n)}:=A(\cdot,\tilde{X}^{(n)}(\cdot),\mathcal{L}_{\tilde{X}^{(n)}(\cdot)})\rightarrow Y~\text{weakly in~} \mathbb{K}^*;
\nonumber\\
\!\!\!\!\!\!\!\!&&(iii)~Z^{(n)}:=B(\cdot,\tilde{X}^{(n)}(\cdot),\mathcal{L}_{\tilde{X}^{(n)}(\cdot)})\rightarrow Z~\text{weakly in~} \mathbb{J}.
\end{eqnarray*}
From $(iii)$ we can infer that
$$\int_0^{\cdot}\Pi^n B(s,\tilde{X}^{(n)}(s),\mathcal{L}_{\tilde{X}^{(n)}(s)})d\tilde{W}(s)\to\int_0^{\cdot} Z(s)d\tilde{W}(s)$$
weakly star in $L^{\infty}([0,T];L^2(\Omega;H))$.

Let us define
$$X(t):=X(0)+\int_0^tY(s)ds+\int_0^tZ(s)dW(s),~t\in[0,T].$$
Then by a standard argument (cf.~\cite{HLL1} or \cite{RSZ1}), it is straightforward that $X=\bar{X}=\tilde{X}$, $dt\times \tilde{\mathbb{P}}$-a.e..
From now on, we will work on the  filtered probability space $(\tilde{\Omega},\tilde{\mathcal{F}},\tilde{\mathcal{F}}(t),\tilde{\mathbb{P}})$. However, without loss of generality,  we drop all the superscripts $\tilde{}$ to simplify the notations, for example, we write $\tilde{X}^{(n)}$ as $X^{(n)}$. By \cite[Theorem 4.2.5]{LR1}, $X$ is an $H$-valued continuous $(\mathcal{F}(t))$-adapted process.

Now it is sufficient to prove that
\begin{equation}\label{es27}
A(\cdot,X(\cdot),\mathcal{L}_{X(\cdot)})=Y,~B(\cdot,X(\cdot),\mathcal{L}_{X(\cdot)})=Z,~dt\times\mathbb{P}\text{-a.e.}.
\end{equation}

In order to prove (\ref{es27}), we first introduce the following set
$$\mathbb{S}:=\Big\{\phi:\phi~\text{is}~V\text{-valued}~(\mathcal{F}_t)\text{-adapted process such that}~\mathbb{E}\int_0^T\rho(\phi(s))ds<\infty\Big\}.$$
For any $\phi\in \mathbb{K}\cap\mathbb{S}\cap L^p(\Omega;L^{\infty}([0,T];H))$,
\begin{eqnarray}\label{es28}
\!\!\!\!\!\!\!\!&&\mathbb{E}\Big(e^{-\int_0^t(C+C\mathbb{E}\|\phi(s)\|_H^{\theta}+\rho(\phi(s)))ds}\|X^{(n)}(t)\|_H^2\Big)-\mathbb{E}\|X^{(n)}(0)\|_H^2
\nonumber\\
\leq~~\!\!\!\!\!\!\!\!&&\mathbb{E}\Big[\int_0^te^{-\int_0^s(C+C\mathbb{E}\|\phi(r)\|_H^{\theta}+\rho(\phi(r)))dr}\Big(\|B(s,X^{(n)}(s),\mathcal{L}_{X^{(n)}(s)})\|_{L_2(U,H)}^2
\nonumber\\
\!\!\!\!\!\!\!\!&&
~~~~~~~~~+2{}_{V^*}\langle A(s,X^{(n)}(s),\mathcal{L}_{X^{(n)}(s)}),X^{(n)}(s)\rangle_V
\nonumber\\
\!\!\!\!\!\!\!\!&&
~~~~~~~~~
-(C+C\mathbb{E}\|\phi(s)\|_H^{\theta}+\rho(\phi(s)))\|X^{(n)}(s)\|_H^2\Big)ds\Big]
\nonumber\\
\leq~~\!\!\!\!\!\!\!\!&&\mathbb{E}\Big[\int_0^te^{-\int_0^s(C+C\mathbb{E}\|\phi(r)\|_H^{\theta}+\rho(\phi(r)))dr}
\nonumber\\
\!\!\!\!\!\!\!\!&&~~~~~~~\cdot
\Big(\|B(s,X^{(n)}(s),\mathcal{L}_{X^{(n)}(s)})-B(s,\phi(s),\mathcal{L}_{\phi(s)})\|_{L_2(U,H)}^2
\nonumber\\
\!\!\!\!\!\!\!\!&&~~~~~~~+2{}_{V^*}\langle A(s,X^{(n)}(s),\mathcal{L}_{X^{(n)}(s)})-A(s,\phi(s),\mathcal{L}_{\phi(s)}),X^{(n)}(s)-\phi(s)\rangle_V
\nonumber\\
\!\!\!\!\!\!\!\!&&~~~~~~~-(C+C\mathbb{E}\|\phi(s)\|_H^{\theta}+\rho(\phi(s)))\|X^{(n)}(s)-\phi(s)\|_H^2\Big)ds\Big]
\nonumber\\
\!\!\!\!\!\!\!\!&&+\mathbb{E}\Big[\int_0^te^{-\int_0^s(C+C\mathbb{E}\|\phi(r)\|_H^{\theta}+\rho(\phi(r)))dr}
\nonumber\\
\!\!\!\!\!\!\!\!&&~~~~~~~\cdot
\Big(2{}_{V^*}\langle A(s,X^{(n)}(s),\mathcal{L}_{X^{(n)}(s)})-A(s,\phi(s),\mathcal{L}_{\phi(s)}),\phi(s)\rangle_V
\nonumber\\
\!\!\!\!\!\!\!\!&&~~~~~~~+2{}_{V^*}\langle A(s,\phi(s),\mathcal{L}_{\phi(s)}),X^{(n)}(s)\rangle_V-\|B(s,\phi(s),\mathcal{L}_{\phi(s)})\|_{L_2(U,H)}^2
\nonumber\\
\!\!\!\!\!\!\!\!&&~~~~~~~+2\langle B(s,X^{(n)}(s),\mathcal{L}_{X^{(n)}(s)}),B(s,\phi(s),\mathcal{L}_{\phi(s)})\rangle_{L_2(U,H)}
\nonumber\\
\!\!\!\!\!\!\!\!&&~~~~~~~
-2(C+C\mathbb{E}\|\phi(s)\|_H^{\theta}+\rho(\phi(s)))\langle X^{(n)}(s),\phi(s)\rangle_H
\nonumber\\
\!\!\!\!\!\!\!\!&&~~~~~~~+(C+C\mathbb{E}\|\phi(s)\|_H^{\theta}+\rho(\phi(s)))\|\phi(s)\|_H^2\Big)ds\Big]
\nonumber\\
=:~~\!\!\!\!\!\!\!\!&&~(\text{I})+(\text{II}),
\end{eqnarray}
where the constant $C>0$ is the same as in the condition $(\mathbf{H2})$.

Note that by $(\mathbf{H2})$ and the definition of $\phi$, the first integral on the right hand side of (\ref{es28}) can be controlled as follows
\begin{eqnarray}\label{es29}
(\text{I})\leq~~\!\!\!\!\!\!\!\!&&C\mathbb{E}\Big[\int_0^t(1+\mathbb{E}\|\phi(s)\|_H^{\theta})
\mathbb{E}\|X^{(n)}(s)-\phi(s)\|_H^2ds\Big]
\nonumber\\
\leq~~\!\!\!\!\!\!\!\!&& C\mathbb{E}\int_0^t
\|X^{(n)}(s)-\phi(s)\|_H^2ds.
\end{eqnarray}
In view of (\ref{es28}) and (\ref{es29}), due to the lower semicontinuity (see e.g.~\cite[(3.13)]{HLL1}), for any $\psi\in L^{\infty}([0,T];[0,\infty))$,
\begin{eqnarray}\label{es31}
\!\!\!\!\!\!\!\!&&
\mathbb{E}\Big[\int_0^T\psi_t\Big(e^{-\int_0^t(C+C\mathbb{E}\|\phi(s)\|_H^{\theta}+\rho(\phi(s)))ds}\|X(t)\|_H^2-\|X(0)\|_H^2\Big)dt\Big]
\nonumber\\
\leq~~\!\!\!\!\!\!\!\!&&\liminf_{n\to\infty}\mathbb{E}\Big[\int_0^T\psi_t\Big(e^{-\int_0^t(C+C\mathbb{E}\|\phi(s)\|_H^{\theta}+\rho(\phi(s)))ds}\|X^{(n)}(t)\|_H^2-\|X^{(n)}(0)\|_H^2\Big)dt\Big]
\nonumber\\
\leq~~\!\!\!\!\!\!\!\!&&C\liminf_{n\to\infty}\mathbb{E}\int_0^T
\|X^{(n)}(s)-\phi(s)\|_H^2ds+\mathbb{E}\Big[\int_0^T\psi_t\Big(\int_0^te^{-\int_0^s(C+2\mathbb{E}\|\phi(r)\|_H^{\theta}+\rho(\phi(r)))dr}
\nonumber\\
\!\!\!\!\!\!\!\!&&~~~~\cdot
\big(2{}_{V^*}\langle Y(s)-A(s,\phi(s),\mathcal{L}_{\phi(s)}),\phi(s)\rangle_V+2{}_{V^*}\langle A(s,\phi(s),\mathcal{L}_{\phi(s)}),X(s)\rangle_V
\nonumber\\
\!\!\!\!\!\!\!\!&&~~~~-\|B(s,\phi(s),\mathcal{L}_{\phi(s)})\|_{L_2(U,H)}^2+2\langle Z(s),B(s,\phi(s),\mathcal{L}_{\phi(s)})\rangle_{L_2(U,H)}
\nonumber\\
\!\!\!\!\!\!\!\!&&~~~~-2(C+C\mathbb{E}\|\phi(s)\|_H^{\theta}+\rho(\phi(s)))\langle X(s),\phi(s)\rangle_H
\nonumber\\
\!\!\!\!\!\!\!\!&&~~~~+(C+C\mathbb{E}\|\phi(s)\|_H^{\theta}+\rho(\phi(s)))\|\phi(s)\|_H^2\big)ds\Big)dt\Big].
\end{eqnarray}
On the other hand, by It\^{o}'s formula we know that for any $\phi\in \mathbb{K}\cap\mathbb{S}\cap L^p(\Omega;L^{\infty}([0,T];H))$,
\begin{eqnarray}\label{es30}
\!\!\!\!\!\!\!\!&&\mathbb{E}\Big[e^{-\int_0^t(C+C\mathbb{E}\|\phi(s)\|_H^{\theta}+\rho(\phi(s)))ds}\|X(t)\|_H^2-\|X(0)\|_H^2\Big]
\nonumber\\
=~~\!\!\!\!\!\!\!\!&&\mathbb{E}\Big[\int_0^te^{-\int_0^s(C+C\mathbb{E}\|\phi(r)\|_H^{\theta}+\rho(\phi(r)))dr}\big(2{}_{V^*}\langle Y(s),X(s)\rangle_V+\|Z(s)\|_{L_2(U,H)}^2
\nonumber\\
\!\!\!\!\!\!\!\!&&~~~~~~~~ -(C+C\mathbb{E}\|\phi(s)\|_H^{\theta}+\rho(\phi(s)))\|X(s)\|_H^2\big)ds\Big].
\end{eqnarray}
Now substituting (\ref{es30}) into (\ref{es31}), it turns out that
\begin{eqnarray}\label{es32}
\!\!\!\!\!\!\!\!&&\mathbb{E}\Big[\int_0^T\psi_t\Big(\int_0^te^{-\int_0^s(C+C\mathbb{E}\|\phi(r)\|_H^{\theta}+\rho(\phi(r)))dr}\big(\|B(s,\phi(s),\mathcal{L}_{\phi(s)})-Z(s)\|_{L_2(U,H)}^2
\nonumber\\
\!\!\!\!\!\!\!\!&&~~~~+2{}_{V^*}\langle Y(s)-A(s,\phi(s),\mathcal{L}_{\phi(s)}),X(s)-\phi(s)\rangle_V
\nonumber\\
\!\!\!\!\!\!\!\!&&
~~~~-(C+C\mathbb{E}\|\phi(s)\|_H^{\theta}+\rho(\phi(s)))\|X(s)-\phi(s)\|_H^2\big)ds\Big)dt\Big]
\nonumber\\
\!\!\!\!\!\!\!\!&&
\leq C\liminf_{n\to\infty}\mathbb{E}\int_0^T
\|X^{(n)}(s)-\phi(s)\|_H^2ds.
\end{eqnarray}
We claim that
\begin{equation}\label{es35}
\lim_{n\to\infty}\mathbb{E}\int_0^T
\|X^{(n)}(s)-X(s)\|_H^2ds=0,
\end{equation}
here selecting a subsequence if necessary.

In fact, by the convergence (\ref{conver1}), we can find a subsequence still denoted by $\{X^{(n)}\}_{n\in\mathbb{N}}$ such that
$$\lim_{n\to\infty} \|X^{(n)}(t)-X(t)\|_H=0,~dt\times \mathbb{P}\text{-a.e.},$$
then by (\ref{es20}), (\ref{es38}) and Vitali's convergence theorem we deduce that the claim holds.

Due to (\ref{es38}), it is straightforward that
$$X\in\mathbb{K}\cap\mathbb{S}\cap L^p(\Omega;L^{\infty}([0,T];H)).$$
Now by taking $\phi=X$ we can deduce from (\ref{es32}) and (\ref{es35}) that
$$Z=B(\cdot,X(\cdot),\mathcal{L}_{X(\cdot)}).$$
Moreover, letting $\phi=X-\varepsilon\tilde{\phi}v$ for any $\varepsilon>0$, $v\in V$ and $\tilde{\phi}\in L^\infty([0,T]\times\Omega;\mathbb{R})$,  it follows that
$$\mathbb{W}_{2,H}(\mathcal{L}_{X(s)},\mathcal{L}_{\phi(s)})^2\leq\mathbb{E}\|\varepsilon\tilde{\phi}(s)v\|_{H}^2\leq \varepsilon\|\tilde{\phi}\|_{\infty}^2\|v\|_{H}^2\downarrow0,~\text{as}~\varepsilon\downarrow0.$$
Then let $\varepsilon\to 0$ and by dominated convergence theorem,  $(\mathbf{H1})$ and the arbitrariness of $\psi$ and $\tilde{\phi}$, we infer that
$$Y=A(\cdot,X(\cdot),\mathcal{L}_{X(\cdot)}).$$
Thus we can conclude that $(X,W)$ is a weak solution of (\ref{eqSPDE}). Finally, similar to the proof of Proposition \ref{pro3}, by the modified Yamada-Watanabe theorem and the condition $(\mathbf{H2})$ we can get that Proposition \ref{pro5} follows.
\end{proof}

\subsection{Pathwise uniqueness}\label{sec3.4}
In this subsection, we devote to proving the pathwise uniqueness of solutions  to MVSPDE (\ref{eqSPDE}),  then combining with Proposition \ref{pro5} it implies Theorem \ref{th2}.

\begin{proposition}\label{pro6}
Suppose that all assumptions in Theorem \ref{th2} hold, then the pathwise uniqueness holds for solutions in the sense of Definition \ref{de2} provided satisfying (\ref{es37}).
\end{proposition}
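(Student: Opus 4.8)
The plan is to run the argument of Case~3 in the proof of Proposition~\ref{pro4} in the variational framework. Let $X,Y$ be two solutions of (\ref{eqSPDE}) on $[0,T]$ with $X(0)=Y(0)\in L^p(\Omega,\mathcal{F}(0),\mathbb{P};H)$, both satisfying the energy bound (\ref{es37}), and set $Z:=X-Y$. Since $p>\eta\ge\theta$, both $\mathcal{L}_{X(s)}$ and $\mathcal{L}_{Y(s)}$ lie in $\mathcal{P}_\theta(H)$, so $(\mathbf{H2})$ applies with $u=X(s)$, $v=Y(s)$, $\mu=\mathcal{L}_{X(s)}$, $\nu=\mathcal{L}_{Y(s)}$; using the natural coupling $(X(s),Y(s))$ one gets $\mathbb{W}_{2,H}(\mathcal{L}_{X(s)},\mathcal{L}_{Y(s)})^2\le\mathbb{E}\|Z(s)\|_H^2$.

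First I would apply the It\^o formula for the Gelfand triple \cite[Theorem~4.2.5]{LR1} to $\|Z(t)\|_H^2$: this is legitimate because $Z\in L^\alpha([0,T]\times\Omega;V)$, the drift $A(\cdot,X,\mathcal{L}_{X})-A(\cdot,Y,\mathcal{L}_{Y})$ lies in $\mathbb{K}^*$ by $(\mathbf{H4})$, (\ref{es37}) and $p\ge\beta+2$ (exactly as in Lemma~\ref{lem4}), and the diffusion $B(\cdot,X,\mathcal{L}_{X})-B(\cdot,Y,\mathcal{L}_{Y})$ lies in $\mathbb{J}$ by $(\mathbf{H4})$. Then I multiply the resulting It\^o process by the absolutely continuous weight
$$\Lambda(t):=\exp\Big(-\int_0^t\big(2C+\rho(Y(s))+2C\,\mathbb{E}\|Y(s)\|_H^\theta\big)ds\Big),$$
so that $d\Lambda(t)=-(2C+\rho(Y(t))+2C\,\mathbb{E}\|Y(t)\|_H^\theta)\Lambda(t)\,dt$, and use the semimartingale product rule. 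The rate of $\Lambda$ has been tuned so that, after inserting $(\mathbf{H2})$, the $\|Z(s)\|_H^2$-terms combine to leave exactly $-(C+C\,\mathbb{E}\|Y(s)\|_H^\theta)\|Z(s)\|_H^2$, which together with $\mathbb{W}_{2,H}(\mathcal{L}_{X(s)},\mathcal{L}_{Y(s)})^2\le\mathbb{E}\|Z(s)\|_H^2$ gives $d(\Lambda(t)\|Z(t)\|_H^2)\le (C+C\,\mathbb{E}\|Y(t)\|_H^\theta)\Lambda(t)(\mathbb{E}\|Z(t)\|_H^2-\|Z(t)\|_H^2)\,dt+dN(t)$ for a local martingale $N$. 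Localising $N$ by $\tau_R:=\inf\{t:\|X(t)\|_H\vee\|Y(t)\|_H\ge R\}\wedge T$ (on which its quadratic variation is bounded by $(\mathbf{H4})$, $\Lambda\le1$ and (\ref{es37})), taking expectations, using $Z(0)=0$, and letting $R\to\infty$ via Fatou's lemma and (\ref{es37}), one arrives at
$$\mathbb{E}\big[\Lambda(t)\|Z(t)\|_H^2\big]\le\mathbb{E}\Big[\int_0^t\big(C+C\,\mathbb{E}\|Y(s)\|_H^\theta\big)\Lambda(s)\big(\mathbb{E}\|Z(s)\|_H^2-\|Z(s)\|_H^2\big)ds\Big],\qquad t\in[0,T].$$

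Since $C+C\,\mathbb{E}\|Y(s)\|_H^\theta$ $(\ge0)$ and $\mathbb{E}\|Z(s)\|_H^2$ are deterministic while $0<\Lambda(s)\le1$, the mean-zero/sign-splitting device of Case~3 in the proof of Proposition~\ref{pro4} (split according to the sign of $\mathbb{E}\|Z(s)\|_H^2-\|Z(s)\|_H^2$) bounds the right-hand side by $\mathbb{E}[\int_0^t(C+C\,\mathbb{E}\|Y(s)\|_H^\theta)(\mathbb{E}\|Z(s)\|_H^2-\|Z(s)\|_H^2)ds]=0$; hence $\mathbb{E}[\Lambda(t)\|Z(t)\|_H^2]\le0$ for all $t$. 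To conclude $Z\equiv0$ it suffices to know $\Lambda(t)>0$ $\mathbb{P}$-a.s.: by the a.s.\ continuity of the $H$-valued path $Y$ and $\int_0^T\|Y(s)\|_V^\alpha ds<\infty$ $\mathbb{P}$-a.s.\ (from (\ref{es37})), the growth bound (\ref{es22}) yields $\int_0^T\rho(Y(s))ds<\infty$ $\mathbb{P}$-a.s., while $\sup_{s\in[0,T]}\mathbb{E}\|Y(s)\|_H^\theta<\infty$ by (\ref{es37}) and $p>\theta$. Consequently $\|Z(t)\|_H^2=0$ $\mathbb{P}$-a.s.\ for each $t$, and path continuity upgrades this to $X=Y$ on $[0,T]$, $\mathbb{P}$-a.s.; combined with Proposition~\ref{pro5} this proves Theorem~\ref{th2}.

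The step I expect to be the main obstacle is precisely the passage yielding ``$\le0$'' in the last displayed inequality: the weight $\Lambda$ is random whereas both the multiplying coefficient $C+C\,\mathbb{E}\|Y(s)\|_H^\theta$ and the ``barycentre'' $\mathbb{E}\|Z(s)\|_H^2$ are deterministic, so $0<\Lambda\le1$ may be used only on the nonnegative part of $\mathbb{E}\|Z(s)\|_H^2-\|Z(s)\|_H^2$, its negative part being discarded — this is the mechanism of Case~3 of Proposition~\ref{pro4}, and it is the reason one assumes the asymmetric, moment-weighted form of $(\mathbf{H2})$ rather than a symmetric local monotonicity. A secondary technical point is the rigorous justification of the It\^o/product formula for $\Lambda(t)\|Z(t)\|_H^2$ in the Gelfand triple, which follows by combining \cite[Theorem~4.2.5]{LR1} with the ordinary product rule, $\Lambda$ being of bounded variation.
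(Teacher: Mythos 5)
Your argument coincides with the paper's own proof of Proposition \ref{pro6}: the same exponential weight $\exp\big(-\int_0^t(2C+2C\,\mathbb{E}\|Y(s)\|_H^{\theta}+\rho(Y(s)))ds\big)$ combined with It\^o's formula and the product rule, the same application of $(\mathbf{H2})$ together with $\mathbb{W}_{2,H}(\mathcal{L}_{X(s)},\mathcal{L}_{Y(s)})^2\le\mathbb{E}\|Z(s)\|_H^2$, the same mean-zero/sign-splitting device (borrowed from Case~3 of Proposition \ref{pro4}) to force $\mathbb{E}\big[\Lambda(t)\|Z(t)\|_H^2\big]\le 0$, and the same final step using the $\mathbb{P}$-a.s.\ finiteness of $\int_0^T(2C+2C\,\mathbb{E}\|Y(s)\|_H^{\theta}+\rho(Y(s)))ds$ to conclude $X=Y$. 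The only difference is cosmetic: you make explicit the localization by $\tau_R$ and the integrability checks needed to justify the It\^o/product formula, which the paper handles implicitly via (\ref{es53})--(\ref{es54}).
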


\begin{proof}
Let $X(t),Y(t)$ be two solutions of (\ref{eqSPDE}) in the sense of Definition \ref{de2}, which satisfy (\ref{es37}), with initial random variable $\xi$.
Recall
\begin{eqnarray}
\!\!\!\!\!\!\!\!&&\mathbb{E}\Big[\sup_{t\in[0,T]}\|X(t)\|_H^p\Big]+\mathbb{E}\int_0^T\|X(t)\|_V^{\alpha}dt+\mathbb{E}\int_0^T\|X(t)\|_H^{p-2}\|X(t)\|_V^{\alpha}dt<\infty,\label{es53}
\\
\!\!\!\!\!\!\!\!&&\mathbb{E}\Big[\sup_{t\in[0,T]}\|Y(t)\|_H^p\Big]+\mathbb{E}\int_0^T\|Y(t)\|_V^{\alpha}dt+\mathbb{E}\int_0^T\|Y(t)\|_H^{p-2}\|Y(t)\|_V^{\alpha}dt<\infty.\label{es54}
\end{eqnarray}

\textbf{Case 1:} Assume that the condition $(\mathbf{H2}')$ holds. Define a stopping time
\begin{eqnarray*}
\tau_R:=~\!\!\!\!\!\!\!\!&&~~~\tau_R^{X}\wedge\tau_R^{Y}
\\
=~\!\!\!\!\!\!\!\!&&~\inf\Bigg\{t\in[0,T]:\Big\{\|X(t)\|_{H}+\int_0^t\|X(s)\|_{V}^{\alpha}ds\Big\}\vee\Big\{\|Y(t)\|_{H}+\int_0^t\|Y(s)\|_{V}^{\alpha}ds\Big\}\geq R\Bigg\}.
\end{eqnarray*}
Set
$$\phi(t):=C+C\mathbb{E}\|X(t)\|_{H}^{\theta}+C\mathbb{E}\|Y(t)\|_{H}^{\theta}+\rho_1(X(t))
+\rho_2(Y(t)).$$
Applying It\^{o}'s formula, by (\ref{es53}) and (\ref{es54}) we can get that for any $t\in[0,T]$,
\begin{eqnarray}\label{es111}
\!\!\!\!\!\!\!\!&&\|X(t\wedge\tau_R)-Y(t\wedge\tau_R)\|_{H}^2
\nonumber\\
\leq~\!\!\!\!\!\!\!\!&&~~\int_0^{t\wedge\tau_R}
\Big(2{}_{{V}^*}\langle A(s,X(s),\mathcal{L}_{X(s)})-A(s,Y(s),\mathcal{L}_{Y(s)}),X(s)-Y(s)\rangle_{V}
\nonumber\\
\!\!\!\!\!\!\!\!&&~~~~
+\|B(s,X(s),\mathcal{L}_{X(s)})-B(s,Y(s),\mathcal{L}_{Y(s)})\|_{L_2(U;H)}^2\Big)ds+2|\mathcal{M}(t\wedge\tau_R)|
\nonumber\\
\leq~\!\!\!\!\!\!\!\!&&~~\int_0^{t\wedge\tau_R}\phi(s)\big(\|X(s)-Y(s)\|_{H}^2+\mathbb{W}_{2,T,R,H}(\mathcal{L}_{X_s},\mathcal{L}_{Y_s})^2\big)ds
+2|\mathcal{M}(t\wedge\tau_R)|
\nonumber\\
\leq~\!\!\!\!\!\!\!\!&&~~~C\int_0^{t\wedge\tau_R}\phi(s)\Big\{\|X(s)-Y(s)\|_{H}^2
\nonumber\\
\!\!\!\!\!\!\!\!&&~~~~
+\mathbb{E}\Big[\sup_{r\in[0,s]}\|X(r\wedge\tau_R)-Y(r\wedge\tau_R)\|_{H}^2\Big]\Big\}ds+2|\mathcal{M}(t\wedge\tau_R)|,
\end{eqnarray}
where $\mathcal{M}(t)$ is a local martingale given by
$$\mathcal{M}(t):=\int_0^t\langle X(s)-Y(s), \big(B(s,X(s),\mathcal{L}_{X(s)})-B(s,Y(s),\mathcal{L}_{Y(s)})\big)dW_s\rangle_{H}.$$
By B-D-G's inequality, we have
\begin{eqnarray*}
\!\!\!\!\!\!\!\!&&~~\mathbb{E}\Big[\sup_{t\in[0,T]}|\mathcal{M}(t\wedge\tau_R)|\Big]
\nonumber\\
\leq~~\!\!\!\!\!\!\!\!&&~~
\mathbb{E}\Big[\int_0^{T\wedge\tau_R}\|B(t,X(t),\mathcal{L}_{X(t)})-B(t,Y(t),\mathcal{L}_{Y(t)})\|_{L_2(U,H)}^2\|X(t)-Y(t)\|_{H}^2dt\Big]^{\frac{1}{2}}
\nonumber\\
\leq~~\!\!\!\!\!\!\!\!&&~~\frac{1}{2}\mathbb{E}\Big[\sup_{t\in[0,T]}\|X(t\wedge\tau_R)-Y(t\wedge\tau_R)\|_{H}^2\Big]
\nonumber\\
\!\!\!\!\!\!\!\!&&~~+C\mathbb{E}\Big[\int_0^{T\wedge\tau_R}\big(\|X(t)-Y(t)\|_{H}^2+\mathbb{W}_{2,T,R,H}(\mathcal{L}_{X_t},\mathcal{L}_{Y_t})^2\big)dt\Big].
\end{eqnarray*}
Now taking $\sup_{t\in[0,T]}$ and expectation on both sides of (\ref{es111}), we have
\begin{eqnarray*}
\mathbb{E}\Big[\sup_{t\in[0,T\wedge\tau_R]}\|X(t)-Y(t)\|_{H}^2\Big]
\leq~~\!\!\!\!\!\!\!\!&&~~~C\mathbb{E}\int_0^{T\wedge\tau_R}\phi(t)\|X(t)-Y(t)\|_{H}^2dt
\nonumber\\
\!\!\!\!\!\!\!\!&&~~~
+C\mathbb{E}\int_0^{T}\phi(t)\mathbb{E}\Big[\sup_{s\in[0,t\wedge\tau_R]}\|X(s)-Y(s)\|_{H}^2\Big]dt.
\end{eqnarray*}
Combining stochastic Gronwall's lemma (cf.~\cite[Lemma A.1]{LLX}), (\ref{es53}) and (\ref{es54}), it follows that
\begin{eqnarray}\label{es23}
\!\!\!\!\!\!\!\!&&~~~\mathbb{E}\Big[\sup_{t\in[0,T\wedge\tau_R]}\|X(t)-Y(t)\|_{H}^2\Big]
\nonumber\\
\!\!\!\!\!\!\!\!&&~~~
\leq C_R\int_0^{T}\mathbb{E}\phi(t)\cdot\mathbb{E}\Big[\sup_{s\in[0,t\wedge\tau_R]}\|X(s)-Y(s)\|_{H}^2\Big]dt.
\end{eqnarray}
By (\ref{es22}), (\ref{es53}) and (\ref{es54}) we know
$$\int_0^{T}\mathbb{E}\phi(t)dt=\mathbb{E}\int_0^{T}\phi(t)dt<\infty,$$
thus (\ref{es23}) and Gronwall's lemma imply
\begin{equation*}\label{es114}
\mathbb{E}\Big[\sup_{t\in[0,T\wedge\tau_R]}\|X(t)-Y(t)\|_{H}^2\Big]\leq 0.
\end{equation*}
Finally, Fatou's lemma  implies that
\begin{equation*}
\mathbb{E}\Big[\sup_{t\in[0,T]}\|X(t)-Y(t)\|_{H}^2\Big]\leq \liminf_{R\to\infty}\mathbb{E}\Big[\sup_{t\in[0,T\wedge\tau_R]}\|X(t)-Y(t)\|_{H}^2\Big]=0.
\end{equation*}
We complete the proof.

\vspace{2mm}
\textbf{Case 2:} Assume that the condition $(\mathbf{H2}'')$ holds. Applying It\^{o}'s formula and the product rule, we can get that for any $t\in[0,T]$,
\begin{eqnarray*}
\!\!\!\!\!\!\!\!&&e^{-\int_0^t2C(1+\mathbb{E}\|X(s)\|_H^{\theta}+\mathbb{E}\|Y(s)\|_H^{\theta})ds}\|X(t)-Y(t)\|_H^2
\nonumber\\
\leq~~\!\!\!\!\!\!\!\!&&\int_0^te^{-\int_0^s2C(1+\mathbb{E}\|X(r)\|_H^{\theta}+\mathbb{E}\|Y(r)\|_H^{\theta})dr}
\nonumber\\
\!\!\!\!\!\!\!\!&&~~~~\cdot
\Big(2{}_{V^*}\langle A(s,X(s),\mathcal{L}_{X(s)})-A(s,Y(s),\mathcal{L}_{Y(s)}),X(s)-Y(s)\rangle_V
\nonumber\\
\!\!\!\!\!\!\!\!&&~~~~
+\|B(s,X(s),\mathcal{L}_{X(s)})-B(s,Y(s),\mathcal{L}_{Y(s)})\|_{L_2(U,H)}^2
\nonumber\\
\!\!\!\!\!\!\!\!&&~~~~
-2C(1+\mathbb{E}\|X(s)\|_H^{\theta}+\mathbb{E}\|Y(s)\|_H^{\theta})\|X(s)-Y(s)\|_H^2\Big)ds
\nonumber\\
\!\!\!\!\!\!\!\!&&+2\int_0^te^{-\int_0^s2C(1+\mathbb{E}\|X(r)\|_H^{\theta}+\mathbb{E}\|Y(r)\|_H^{\theta})dr}
\nonumber\\
\!\!\!\!\!\!\!\!&&~~~~\cdot
\langle X(s)-Y(s),\big(B(s,X(s),\mathcal{L}_{X(s)})-B(s,Y(s),\mathcal{L}_{Y(s)})\big)dW(s)\rangle_H,
\end{eqnarray*}
where the constant $C>0$ is the same as in the condition $(\mathbf{H2}'')$.

Then taking  expectation on both sides of the above inequality, by $(\mathbf{H2}'')$, (\ref{es53}) and (\ref{es54}) we have
\begin{equation*}
\mathbb{E}\Big\{e^{-\int_0^t2C(1+\mathbb{E}\|X(s)\|_H^{\theta}+\mathbb{E}\|Y(s)\|_H^{\theta})ds}\|X(t)-Y(t)\|_H^2\Big\}
\leq0.
\end{equation*}

Consequently, we deduce that $X(t)=Y(t),~\mathbb{P}\text{-a.s.},~t\in[0,T]$, thus the pathwise uniqueness follows from the path continuity on $H$.
\end{proof}

\vspace{2mm}
\textbf{Proof of Theorem \ref{th2}:} Combining Propositions \ref{pro5} and \ref{pro6}, (\ref{eqSPDE}) admits a unique solution in the sense of Definition \ref{de2} provided satisfying (\ref{es37}).
The proof is complete.

\section{Large deviation principle}\label{sec5}
In this section, the main aim is to investigate the small noise asymptotics (more precisely,  LDP) for the following MVSPDEs
\begin{equation}\label{eqsm}
dX^\varepsilon(t)=A(t,X^\varepsilon(t),\mathcal{L}_{X^\varepsilon(t)})dt+\sqrt{\varepsilon}B(t,X^\varepsilon(t),\mathcal{L}_{X^\varepsilon(t)})dW(t),~X^\varepsilon(0)=x\in H,
\end{equation}
 where  $\varepsilon>0$, $W(t)$ is an $U$-valued cylindrical Wiener process (the path
of $W$ take values in  $C([0,T];U_1)$, where $U_1$ is another
Hilbert space in which the embedding $U\subset U_1$ is
Hilbert--Schmidt) defined on $\left(\Omega,\mathcal{F},\{\mathcal{F}(t)\}_{t\geq 0},\mathbb{P}\right)$, the coefficients $A,B$ satisfy the assumptions in Theorem \ref{th2} throughout  this section. In order to obtain the LDP of (\ref{eqsm}), we also need the following time H\"{o}lder continuity of $B$.
\begin{enumerate}

\item [$({\mathbf{H}}{\mathbf{5}})$]\label{H5}
 $($Time H\"{o}lder continuity$)$ There exist $C,\gamma>0$  such that for any $u\in V$, $\mu\in\mathcal{P}_2(H)$ and $t,s\in[0,T]$,
\begin{equation*}
\|B(t,u,\mu)-B(s,u,\mu)\|_{L_2(U,H)}\leq C\big(1+\|u\|_H+\sqrt{\mu(\|\cdot\|_H^2)}\big)|t-s|^{\gamma}.
\end{equation*}
\end{enumerate}
We remark that if $B$ is time homogeneous, then  $(\mathbf{H5})$ is automatically satisfied. We recall some definitions in the theory of large deviations. Let $\{X^\varepsilon\}_{\varepsilon>0}$ denote a family of
random variables defined on a probability space
$(\Omega,\mathcal{F},\mathbb{P})$ and taking values in a Polish
space $\mathcal{E}$. The theory of large deviations  is
concerned with events $A$ for which the probability $\mathbb{P}(X^\varepsilon\in A)$  converges to zero exponentially fast
as $\varepsilon\to 0$.  The exponential decay rate of such probabilities is typically expressed by a  rate function.

\begin{definition}(Rate function) A function $I: \mathcal{E}\to [0,\infty]$ is called
a rate function if $I$ is lower semicontinuous. Moreover, a rate function $I$
is called a {\it good rate function} if  the level set $\{x\in \mathcal{E}: I(x)\le
K\}$ is compact for each constant $K<\infty$.
\end{definition}

\begin{definition}(Large deviation principle) The random variable family
 $\{X^\varepsilon\}_{\varepsilon>0}$ is said to satisfy
 the LDP on $\mathcal{E}$ with rate function
 $I$ if  the following lower and upper bound conditions hold,

(i) (Lower bound) for any open set $G\subset \mathcal{E}$,
$$\liminf_{\varepsilon\to 0}
   \varepsilon \log \mathbb{P}(X^{\varepsilon}\in G)\geq -\inf_{x\in G}I(x).$$

(ii) (Upper bound) for any closed set $F\subset \mathcal{E}$,
$$ \limsup_{\varepsilon\to 0}
   \varepsilon \log \mathbb{P}(X^{\varepsilon}\in F)\leq
  -\inf_{x\in F} I(x).
$$
\end{definition}

The Laplace principle is defined as follows (cf. \cite{DE,DZ}).

\begin{definition}(Laplace principle) The family $\{X^\varepsilon\}_{\varepsilon>0}$ is
said to satisfy the Laplace principle on $\mathcal{E}$ with a rate function
$I$ if for each bounded continuous real-valued function $h$ defined
on $\mathcal{E}$, we have
$$\lim_{\varepsilon\to 0}\varepsilon \log \mathbb{E}\left\lbrace
 \exp\left[-\frac{1}{\varepsilon} h(X^{\varepsilon})\right]\right\rbrace
= -\inf_{x\in E}\left\{h(x)+I(x)\right\}.$$
\end{definition}
It is known that if $\mathcal{E}$ is a Polish space and $I$ is a good rate function,  then the
LDP and Laplace principle are equivalent  because of the Varadhan's lemma \cite{V1} and Bryc's converse \cite{DZ}.

We are now in the position to establish the LDP for MVSPDE (\ref{eqsm}).
\begin{theorem}\label{th3}
Suppose that  all assumptions in Theorem \ref{th2} and $(\mathbf{H5})$
 hold. Then $\{X^\varepsilon\}_{\varepsilon>0}$
satisfies the LDP in $C([0,T]; H)$ with the
good rate function $I$ given by
\begin{equation*}
I(f):=\inf_{\left\{\phi\in L^2([0,T]; U):\  f=\mathcal{G}^0(\int_0^\cdot
\phi(s)ds)\right\}}\left\lbrace\frac{1}{2}
\int_0^T\|\phi(s)\|_U^2ds \right\rbrace,
\end{equation*}
where the measurable map $\mathcal{G}^0$ is defined by (\ref{eqsk}) below.
\end{theorem}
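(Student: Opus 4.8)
The plan is to apply the weak convergence (Budhiraja--Dupuis) approach \cite{BD}: since $C([0,T];H)$ is Polish and $I$ will turn out to be a good rate function, it suffices to verify the two sufficient conditions for the Laplace principle. Write $S_N=\{\phi\in L^2([0,T];U):\int_0^T\|\phi(s)\|_U^2ds\le N\}$, equipped with the weak topology, and let $\mathcal{A}_N$ denote the predictable $S_N$-valued controls. The conditions are: (a) for each $N<\infty$, $\{\mathcal{G}^0(\int_0^\cdot\phi(s)ds):\phi\in S_N\}$ is compact in $C([0,T];H)$; and (b) whenever $\phi^\varepsilon\in\mathcal{A}_N$ and $\phi^\varepsilon\to\phi$ in distribution as $S_N$-valued random elements, then $\mathcal{G}^\varepsilon(W+\tfrac{1}{\sqrt\varepsilon}\int_0^\cdot\phi^\varepsilon(s)ds)\to\mathcal{G}^0(\int_0^\cdot\phi(s)ds)$ in distribution in $C([0,T];H)$. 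The first preliminary step is to make sense of $\mathcal{G}^0$ via the skeleton equation (\ref{eq05}) (equation (\ref{eqsk})): it carries the \emph{deterministic} measure flow $\mathcal{L}_{X^0(t)}=\delta_{X^0(t)}$, where $X^0$ solves $\dot X^0=A(t,X^0,\mathcal{L}_{X^0(t)})$, $X^0(0)=x$; after substituting this fixed measure flow both the equation for $X^0$ and the skeleton equation become \emph{classical} (non-McKean--Vlasov) variational equations whose coefficients satisfy the local monotonicity, coercivity and growth conditions of \cite{LR1} (the terms $\nu(\|\cdot\|_H^\theta)$ reduce to the bounded function $\|X^0(t)\|_H^\theta$ by the a priori bound on $X^0$, and the Wasserstein terms vanish), so $X^0$ and $X^\phi:=\mathcal{G}^0(\int_0^\cdot\phi(s)ds)$ are well defined with the energy estimate of Theorem \ref{th2}. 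A key auxiliary fact, proved by It\^o's formula together with $(\mathbf{H2})$, the integrating factor $\exp\big(-\int_0^t(C+\rho(X^0(s))+C\|X^0(s)\|_H^\theta)ds\big)$ (as in the proofs of Propositions \ref{pro4} and \ref{pro6}) and Gronwall's lemma, is that
$$\sup_{t\in[0,T]}\mathbb{W}_{2,H}(\mathcal{L}_{X^\varepsilon(t)},\delta_{X^0(t)})^2\le\sup_{t\in[0,T]}\mathbb{E}\|X^\varepsilon(t)-X^0(t)\|_H^2\le C\varepsilon\longrightarrow0,$$
so that throughout the proof the \emph{deterministic}, hence rare-event-independent, measure flow $\mathcal{L}_{X^\varepsilon(t)}$ may be replaced by $\delta_{X^0(t)}$ up to a vanishing error controlled through $(\mathbf{H2})$, $(\mathbf{H4})$ and $(\mathbf{H5})$.

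For condition (b), by Girsanov's theorem $\mathcal{G}^\varepsilon(W+\tfrac{1}{\sqrt\varepsilon}\int_0^\cdot\phi^\varepsilon(s)ds)$ equals the solution $X^{\varepsilon,\phi^\varepsilon}$ of the controlled equation
\begin{equation*}
dX^{\varepsilon,\phi^\varepsilon}(t)=\big(A(t,X^{\varepsilon,\phi^\varepsilon}(t),\mathcal{L}_{X^\varepsilon(t)})+B(t,X^{\varepsilon,\phi^\varepsilon}(t),\mathcal{L}_{X^\varepsilon(t)})\phi^\varepsilon(t)\big)dt+\sqrt\varepsilon\,B(t,X^{\varepsilon,\phi^\varepsilon}(t),\mathcal{L}_{X^\varepsilon(t)})dW(t),
\end{equation*}
with $X^{\varepsilon,\phi^\varepsilon}(0)=x$; it is crucial that the law $\mathcal{L}_{X^\varepsilon(t)}$ of the \emph{uncontrolled} process enters the coefficients, so that this equation, having a deterministic measure input, is again a classical controlled variational SPDE and is well posed by \cite{LR1}. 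I would then: (i) derive a priori estimates $\mathbb{E}\big[\sup_{t\le T}\|X^{\varepsilon,\phi^\varepsilon}(t)\|_H^2+\int_0^T\|X^{\varepsilon,\phi^\varepsilon}(t)\|_V^\alpha dt\big]\le C_N$, uniform in $\varepsilon$ and in $\phi^\varepsilon\in\mathcal{A}_N$, using the coercivity $(\mathbf{H3})$, the growth $(\mathbf{H4})$ to control the control term $\langle B\phi^\varepsilon,X\rangle$ via Cauchy--Schwarz in time with $\|\phi^\varepsilon\|_{L^2}^2\le N$, the Burkholder--Davis--Gundy inequality and Gronwall's lemma; (ii) combine these with the bound on $A(\cdot,X^{\varepsilon,\phi^\varepsilon},\cdot)$ in $\mathbb{K}^*$ and standard tightness arguments in the variational framework to obtain tightness of $(X^{\varepsilon,\phi^\varepsilon},\phi^\varepsilon)$ in $C([0,T];H)\times S_N$; (iii) apply the Skorokhod representation theorem to pass to a.s. convergent versions and identify the limit $\bar X$ of $X^{\varepsilon,\phi^\varepsilon}$: here the $\sqrt\varepsilon\,B\,dW$ term vanishes, $\mathcal{L}_{X^\varepsilon(t)}\to\delta_{X^0(t)}$ uniformly, and the monotonicity trick of \cite{LR1} (as in the proof of Proposition \ref{pro5}, with the extra term $\int_0^\cdot B(s,X^{\varepsilon,\phi^\varepsilon}(s),\cdot)\phi^\varepsilon(s)ds$ handled by combining the strong $L^2([0,T];H)$-convergence of $X^{\varepsilon,\phi^\varepsilon}$ — hence of $B(\cdot,X^{\varepsilon,\phi^\varepsilon},\cdot)$ in $\mathbb{J}$ by $(\mathbf{H1})$, $(\mathbf{H4})$ and Vitali's theorem — with the weak $L^2$-convergence $\phi^\varepsilon\to\phi$) shows that $\bar X$ solves the skeleton equation driven by the limiting control; (iv) by uniqueness of the skeleton equation (a classical variational PDE under $(\mathbf{H1})$--$(\mathbf{H4})$) one gets $\bar X=\mathcal{G}^0(\int_0^\cdot\phi(s)ds)$, and since the limit is subsequence-independent this yields convergence in distribution, i.e. condition (b).

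Condition (a) is the deterministic analogue of the same scheme: given $\{\phi^n\}\subset S_N$, weak sequential compactness of balls in the Hilbert space $L^2([0,T];U)$ gives $\phi^n\rightharpoonup\phi\in S_N$ along a subsequence; the deterministic energy estimates plus the monotonicity trick identify the limit of $X^{\phi^n}$ as $X^\phi$, and an It\^o/energy-equality estimate for $\|X^{\phi^n}(t)-X^\phi(t)\|_H^2$, closed by Gronwall with the integrating factor above, upgrades this to convergence in $C([0,T];H)$; hence $\{\mathcal{G}^0(\int_0^\cdot\phi(s)ds):\phi\in S_N\}$ is compact, which also shows that the level sets of $I$ are compact, so $I$ is a good rate function. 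Together with condition (b), the Budhiraja--Dupuis criterion \cite{BD} yields the Laplace principle on $C([0,T];H)$, hence, $C([0,T];H)$ being Polish, the LDP with the good rate function $I$.

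The main obstacle I anticipate is step (iii): passing to the limit in the nonlinear terms $A$ and $B$ while the control $\phi^\varepsilon$ converges only weakly and the measure argument is simultaneously being replaced. Since the classical localization via stopping times $\tau_R$ is incompatible with the non-local measure dependence — precisely the recurring difficulty flagged in the introduction — one cannot localize freely; instead the locally monotone passage to the limit must be performed globally, exploiting that $\mathcal{L}_{X^\varepsilon(t)}$ is deterministic and convergent (so the only genuinely non-local object is under control) and that the local-monotonicity constant $\rho$ is tamed by the polynomial bound (\ref{es22}) and absorbed into a Gronwall integrating factor, exactly as in the pathwise-uniqueness proofs of Propositions \ref{pro4} and \ref{pro6}. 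A secondary technical point is ensuring all a priori estimates are uniform over $\mathcal{A}_N$, which is why the growth condition $(\mathbf{H4})$ on $B$ (rather than mere boundedness) is used to bound the drift contribution $B\phi^\varepsilon$ of the control.
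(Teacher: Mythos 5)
Your overall architecture matches the paper's: the skeleton equation carries the deterministic flow $\mathcal{L}_{X^0(t)}=\delta_{X^0(t)}$, the controlled equation carries $\mathcal{L}_{X^\varepsilon(t)}$ (law of the \emph{uncontrolled} process), and the key auxiliary estimate $\sup_{t}\mathbb{E}\|X^\varepsilon(t)-X^0(t)\|_H^2\le C\varepsilon$ is exactly the paper's Lemma \ref{lem6}. However, there are two genuine gaps. First, you invoke the classical Budhiraja--Dupuis criterion, whose condition (b) requires identifying the distributional limit of $X^{\varepsilon,\phi^\varepsilon}$ when the \emph{random} controls converge only weakly; you yourself flag step (iii) as the main obstacle and do not resolve it. Passing to the limit in $A(t,X^{\varepsilon,\phi^\varepsilon},\cdot)$ under only local monotonicity, and in the product $B(t,X^{\varepsilon,\phi^\varepsilon},\cdot)\phi^\varepsilon$ under weak convergence of $\phi^\varepsilon$, requires strong convergence of $X^{\varepsilon,\phi^\varepsilon}$ in $L^2([0,T];H)$, which does not follow from the energy estimates in this abstract framework (no compact embedding $V\subset H$ is assumed). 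The paper avoids this entirely by using the modified sufficient condition of \cite{MSZ}: its condition (i) only asks that $X^{\varepsilon,\phi^\varepsilon}-\bar{X}^{\phi^\varepsilon}\to0$ in probability for the \emph{same} control, which reduces to a Gronwall estimate on $\|Y^\varepsilon(t)\|_H^2$ via $(\mathbf{H2})$ and Lemma \ref{lem6}, localized by the stopping time $\tau_R$ (note: localization \emph{is} used here, contrary to your remark, because after decoupling the measure arguments are deterministic and only the state variables need to be stopped), followed by $\varepsilon\to0$ then $R\to\infty$.

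Second, your treatment of condition (a) (equivalently the paper's Condition (A)(ii), continuity of $\mathcal{G}^0$ on $S_M$ with the weak topology) glosses over the one term that cannot be closed by Gronwall: $\mathcal{H}_3(t)=2\int_0^t\langle B(s,\bar{X}^{\phi}(s),\delta_{X^0(s)})(\phi^n(s)-\phi(s)),Z^n(s)\rangle_H ds$. Weak convergence $\phi^n\rightharpoonup\phi$ gives no smallness for this term without additional structure. The paper handles it by a time discretization into the five terms $\widetilde{\mathcal{H}}_1,\dots,\widetilde{\mathcal{H}}_5$, using Lemma \ref{lem7} (the $L^2$-in-time increment bound for the skeleton solution), the time H\"older continuity $(\mathbf{H5})$, and crucially the fact that each $B(k\Delta,\bar{X}^{\phi}(k\Delta),\mathcal{L}_{X^0(k\Delta)})$ is a \emph{compact} (Hilbert--Schmidt) operator, which converts the weak convergence of $\int_{k\Delta}^{(k+1)\Delta}(\phi^n-\phi)ds$ in $U$ into strong convergence in $H$. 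Your proposal never explains where $(\mathbf{H5})$ enters, yet it is precisely the hypothesis that makes this step work; a ``monotonicity trick'' identification of the weak limit of $X^{\phi^n}$ does not by itself yield the uniform convergence $\sup_t\|\bar{X}^{\phi^n}(t)-\bar{X}^{\phi}(t)\|_H\to0$ required for the rate function to be good.
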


\begin{remark}
(i) To prove Theorem \ref{th3}, we want to utilize the well-known weak convergence approach developed by Budhiraja et al. (cf.~\cite{BD,BDM}).  The first step of  weak convergence approach is to find a measurable map such that the solution can be represented by a functional of Wiener process.  However, the classical Yamada-Watanabe theorem  does not hold in the McKean-Vlasov case  (cf.~\cite{WFY}). In order to overcome this difficulty, the decoupled method will be used to construct the aforementioned measurable map.

(ii) Compared to the existing works \cite{DST,HLL3,LSZZ} on the LDP for the MVSDE/SPDEs, the result of Theorem \ref{th3} is even new in the finite-dimensional case (i.e.~$V=H=V^*=\mathbb{R}^d$). For instance,  the authors in \cite{DST,LSZZ} considered the MVSDEs with locally Lipschitz but globally monotone drift (see \cite[Assumption 3.2]{DST} or \cite[(A1)]{LSZZ}), whereas here  we only impose  fully local assumption on the coefficients (i.e.~$(\mathbf{H2})$ and $(\mathbf{H2}')$).
\end{remark}

\subsection{Weak convergence approach}

Let
$$\mathcal{A}:=\left\lbrace \phi: \phi\  \text{is  $U$-valued
 $\mathcal{F}_t$-predictable process and}\
  \int_0^T\|\phi(s)\|^2_Uds<\infty \  \mathbb{P}\text{-a.s.}\right\rbrace, $$
  and
$$S_M:=\left\lbrace \phi\in L^2([0,T], U):
\int_0^T\|\phi(s)\|^2_{U}  ds\leq M
 \right\rbrace.$$
It is known that $S_M$ endowed with the weak topology is a Polish space. Let
$$\mathcal{A}_M:=\left\{\phi\in\mathcal{A}: \phi(\cdot)\in S_M, ~\mathbb{P}\text{-a.s.}\right\}.$$

  Now we state a sufficient condition introduced recently in \cite{MSZ} for the Laplace
 principle of $X^\varepsilon$, which is a modified form of the classical weak convergence criterion developed by Budhiraja et al.~\cite{BD,BDM}, and is more convenient to use in the current framework.

For any $\varepsilon>0$, suppose that $\mathcal{G}^\varepsilon: C([0,T]; U_1)\rightarrow
\mathcal{E}$ is a measurable map.

\vspace{1mm}
\textbf{Condition (A)}: There exists a measurable map $\mathcal{G}^0: C([0,T];
U_1)\rightarrow \mathcal{E}$ such that the following two conditions hold:

(i) Let $\{\phi^\varepsilon: \varepsilon>0\}\subset \mathcal{A}_M$ for
any $M<\infty$. For any $\delta>0$,
$$\lim_{\varepsilon\to 0}\mathbb{P}\Big(d\Big(\mathcal{G}^\varepsilon\big(\sqrt{\varepsilon}W_{\cdot}+\int_0^{\cdot}\phi^\varepsilon(s)ds\big),\mathcal{G}^0\big(\int_0^{\cdot}\phi^\varepsilon(s)ds\big)\Big)>\delta\Big)=0, $$
where $d(\cdot,\cdot)$ denotes the metric in the path space $\mathcal{E}$.

(ii) Let $\{\phi^n: n\in\mathbb{N}\}\subset S_M$ for any $M<\infty$ such that $\phi^n$ converges to element $\phi$ in $S_M$ as $n\to\infty$, then
$\mathcal{G}^0\big(\int_0^{\cdot}\phi^n(s)ds\big)$ converges to $\mathcal{G}^0\big(\int_0^{\cdot}\phi(s)ds\big)$ in the space $\mathcal{E}$.

\begin{lemma}\label{lem8}(\cite[Theorem 3.2]{MSZ})  If
$X^\varepsilon=\mathcal{G}^\varepsilon(\sqrt{\varepsilon}W_\cdot )$ and \textbf{Condition (A)}
hold, then $\{X^\varepsilon\}_{\varepsilon>0}$ satisfies the Laplace
principle (hence LDP) in $\mathcal{E}$ with the good
rate function $I$ given by
\begin{equation}\label{rf}
I(f):=\inf_{\left\{\phi\in L^2([0,T]; U):\  f=\mathcal{G}^0(\int_0^\cdot
\phi(s)ds)\right\}}\left\lbrace\frac{1}{2}
\int_0^T\|\phi(s)\|_U^2ds \right\rbrace,
\end{equation}
with the convention $\inf \emptyset:=\infty$.
\end{lemma}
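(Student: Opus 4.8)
The plan is to derive the Laplace principle, which by the Varadhan/Bryc equivalence recalled above is the same as the LDP on the Polish space $\mathcal{E}$ once $I$ is shown to be a good rate function. The starting point is the Boué--Dupuis type variational representation for bounded Borel functionals of the cylindrical Wiener process: for every bounded continuous $h:\mathcal{E}\to\mathbb{R}$, writing $F^\varepsilon:=-\varepsilon\log\mathbb{E}\big[\exp(-\varepsilon^{-1}h(X^\varepsilon))\big]$ and using the representation $X^\varepsilon=\mathcal{G}^\varepsilon(\sqrt\varepsilon W_\cdot)$, one has
\begin{equation*}
F^\varepsilon=\inf_{\phi\in\mathcal{A}}\mathbb{E}\Big[\tfrac12\int_0^T\|\phi(s)\|_U^2\,ds+h\Big(\mathcal{G}^\varepsilon\big(\sqrt\varepsilon W_\cdot+\textstyle\int_0^\cdot\phi(s)\,ds\big)\Big)\Big].
\end{equation*}
It then suffices to prove $\limsup_{\varepsilon\to0}F^\varepsilon\le\inf_{f\in\mathcal{E}}\{h(f)+I(f)\}$ and $\liminf_{\varepsilon\to0}F^\varepsilon\ge\inf_{f\in\mathcal{E}}\{h(f)+I(f)\}$, together with compactness of the level sets of $I$. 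Note $\inf_f\{h(f)+I(f)\}$ is always finite, since $\phi\equiv0$ gives $I(\mathcal{G}^0(0))=0$.

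For the bound $\limsup F^\varepsilon\le\inf_f\{h(f)+I(f)\}$ I would fix $\eta>0$ and pick a deterministic $\phi_0\in L^2([0,T];U)$ with $f_0:=\mathcal{G}^0(\int_0^\cdot\phi_0(s)\,ds)$ and $\tfrac12\int_0^T\|\phi_0\|_U^2\,ds+h(f_0)\le\inf_f\{h(f)+I(f)\}+\eta$, which exists by the definition of $I$. Since $\phi_0$ is a constant (hence $\mathcal{A}_M$-valued for $M$ large) control, the representation gives $F^\varepsilon\le\tfrac12\int_0^T\|\phi_0\|_U^2\,ds+\mathbb{E}\big[h(\mathcal{G}^\varepsilon(\sqrt\varepsilon W_\cdot+\int_0^\cdot\phi_0\,ds))\big]$. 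By \textbf{Condition (A)(i)} the argument of $h$ converges in $\mathbb{P}$-probability to $\mathcal{G}^0(\int_0^\cdot\phi_0\,ds)=f_0$, so bounded continuity of $h$ and dominated convergence yield $\mathbb{E}[h(\cdots)]\to h(f_0)$; hence $\limsup F^\varepsilon\le\inf_f\{h(f)+I(f)\}+\eta$, and $\eta\downarrow0$ finishes this half.

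The bound $\liminf F^\varepsilon\ge\inf_f\{h(f)+I(f)\}$ is the main obstacle. I would take near-optimal controls $\phi^\varepsilon\in\mathcal{A}$ with $\mathbb{E}\big[\tfrac12\int_0^T\|\phi^\varepsilon\|_U^2\,ds+h(\mathcal{G}^\varepsilon(\sqrt\varepsilon W_\cdot+\int_0^\cdot\phi^\varepsilon\,ds))\big]\le F^\varepsilon+\varepsilon$; since $|h|\le\|h\|_\infty$ and $F^\varepsilon\le\|h\|_\infty$ (take $\phi=0$), this forces $\mathbb{E}\int_0^T\|\phi^\varepsilon\|_U^2\,ds\le4\|h\|_\infty+2$ for small $\varepsilon$. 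For fixed large $M$, a stopping-time truncation $\tilde\phi^\varepsilon:=\phi^\varepsilon\mathbf{1}_{[0,\tau^\varepsilon]}$, where $\tau^\varepsilon$ is the first time the accumulated energy $\tfrac12\int_0^\cdot\|\phi^\varepsilon\|_U^2\,ds$ reaches $M$, produces controls in $\mathcal{A}_M$, does not increase the energy term, and changes $F^\varepsilon$ by at most $2\|h\|_\infty\mathbb{P}(\tau^\varepsilon<T)=O(1/M)$ by Chebyshev. Now $S_M$ with the weak $L^2$-topology is a compact Polish space, so $\{\tilde\phi^\varepsilon\}$ is tight there; along a subsequence $\tilde\phi^\varepsilon\Rightarrow\phi$ in $S_M$. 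By \textbf{Condition (A)(ii)} the map $\psi\mapsto\mathcal{G}^0(\int_0^\cdot\psi\,ds)$ is continuous on $S_M$, so $(\tilde\phi^\varepsilon,\mathcal{G}^0(\int_0^\cdot\tilde\phi^\varepsilon\,ds))\Rightarrow(\phi,\mathcal{G}^0(\int_0^\cdot\phi\,ds))$, and combining with \textbf{Condition (A)(i)} we get $(\tilde\phi^\varepsilon,\mathcal{G}^\varepsilon(\sqrt\varepsilon W_\cdot+\int_0^\cdot\tilde\phi^\varepsilon\,ds))\Rightarrow(\phi,\mathcal{G}^0(\int_0^\cdot\phi\,ds))$. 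Passing to a Skorokhod representation, using bounded continuity of $h$ for the $h$-term, weak lower semicontinuity of $\psi\mapsto\int_0^T\|\psi\|_U^2\,ds$ together with Fatou for the energy term, I obtain $\liminf F^\varepsilon\ge\mathbb{E}\big[\tfrac12\int_0^T\|\phi\|_U^2\,ds+h(\mathcal{G}^0(\int_0^\cdot\phi\,ds))\big]-O(1/M)$. For a.e.\ sample point $(\phi,\mathcal{G}^0(\int_0^\cdot\phi\,ds))$ is admissible in the definition of $I$, so $I(\mathcal{G}^0(\int_0^\cdot\phi\,ds))\le\tfrac12\int_0^T\|\phi\|_U^2\,ds$ and the integrand is $\ge\inf_f\{h(f)+I(f)\}$ pointwise; hence $\liminf F^\varepsilon\ge\inf_f\{h(f)+I(f)\}-O(1/M)$, and $M\to\infty$ concludes.

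Finally, to show $I$ is a good rate function: for $K<\infty$, if $I(f)\le K$ then a minimizing sequence $\phi_n$ with $f=\mathcal{G}^0(\int_0^\cdot\phi_n\,ds)$ and $\tfrac12\int_0^T\|\phi_n\|_U^2\,ds\to c\le K$ is eventually in $S_{2K+1}$; by weak compactness a subsequence converges weakly to some $\phi\in S_{2K}$ (weak lower semicontinuity of the norm gives $\phi\in S_{2K}$), and \textbf{Condition (A)(ii)} forces $f=\mathcal{G}^0(\int_0^\cdot\phi\,ds)$. Thus $\{f:I(f)\le K\}=\{\mathcal{G}^0(\int_0^\cdot\psi\,ds):\psi\in S_{2K}\}$ is the image of the weakly compact set $S_{2K}$ under the continuous map of \textbf{Condition (A)(ii)}, hence compact; in particular $I$ is lower semicontinuous. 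Invoking the equivalence of the Laplace principle and the LDP on a Polish space for good rate functions then gives the LDP with the rate function $I$ in (\ref{rf}). The delicate points, where I expect the real work to lie, are the truncation estimate that legitimizes restricting to $\mathcal{A}_M$ and the joint passage to the limit $(\tilde\phi^\varepsilon,\mathcal{G}^\varepsilon(\sqrt\varepsilon W_\cdot+\int_0^\cdot\tilde\phi^\varepsilon\,ds))\Rightarrow(\phi,\mathcal{G}^0(\int_0^\cdot\phi\,ds))$, which is exactly where \textbf{Condition (A)(i)} and \textbf{(A)(ii)} must be combined.
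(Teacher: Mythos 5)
Your proposal is correct in outline, but note that the paper does not prove this lemma at all: it is quoted verbatim from \cite[Theorem 3.2]{MSZ}, and the underlying machinery is the Budhiraja--Dupuis weak convergence criterion \cite{BD,BDM}. What you have written is essentially a self-contained reconstruction of that cited result: you start from the Bou\'e--Dupuis variational representation $-\varepsilon\log\mathbb{E}[e^{-h(X^\varepsilon)/\varepsilon}]=\inf_{\phi\in\mathcal{A}}\mathbb{E}\big[\tfrac12\int_0^T\|\phi\|_U^2\,ds+h(\mathcal{G}^\varepsilon(\sqrt\varepsilon W_\cdot+\int_0^\cdot\phi\,ds))\big]$ and prove the two Laplace bounds plus goodness of $I$ directly, whereas \cite{MSZ} obtain the same conclusion by showing that \textbf{Condition (A)}(i)--(ii) imply the classical Budhiraja--Dupuis hypothesis (convergence in distribution of $\mathcal{G}^\varepsilon(\sqrt\varepsilon W_\cdot+\int_0^\cdot\phi^\varepsilon\,ds)$ to $\mathcal{G}^0(\int_0^\cdot\phi\,ds)$ whenever $\phi^\varepsilon\Rightarrow\phi$ in $S_M$) and then invoking the established theorem. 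The bridge is identical in both routes: your Slutsky-type step, combining (ii) with the continuous mapping theorem and then (i) to pass from $\mathcal{G}^0(\int_0^\cdot\tilde\phi^\varepsilon\,ds)$ to $\mathcal{G}^\varepsilon(\sqrt\varepsilon W_\cdot+\int_0^\cdot\tilde\phi^\varepsilon\,ds)$, is exactly the content of the reduction in \cite{MSZ}; likewise your identification of the level set $\{I\le K\}$ as the image of the weakly compact ball $S_{2K}$ under the map made continuous by (ii) is the standard goodness argument. The remaining steps (near-optimal controls, the stopping-time truncation into $\mathcal{A}_M$ --- up to the harmless $\mathcal{A}_{2M}$ bookkeeping --- with error controlled by Chebyshev, weak lower semicontinuity of the energy plus Fatou after Skorokhod representation) are the standard ingredients and are used correctly. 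So your argument buys a self-contained proof at the level of the abstract criterion, at the cost of redoing work the paper deliberately outsources; for the purposes of this paper the citation suffices, and no gap in your reasoning needs repair.
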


Let us first explain the main idea of the proof of Freidlin-Wentzell's LDP for MVSPDE (\ref{eqsm}). Intuitively, when parameter $\varepsilon$ tends to $0$ in (\ref{eqsm}), the noise term vanishes and it reduces to the following limiting equation
\begin{equation}\label{eq8}
\frac{dX^0(t)}{dt}=A(t,X^0(t),\mathcal{L}_{X^0(t)}),~X^0(0)=x\in H,
\end{equation}
where the solution $X^0(t)$ is a deterministic path and its law $\mathcal{L}_{X^0(t)}=\delta_{X^0(t)}$. By Theorem \ref{th2} we can deduce that (\ref{eq8}) admits a unique strong solution  satisfying $X^0\in C([0,T];H)$.

Let $\mu^\varepsilon(t)=\mathcal{L}_{X^\varepsilon(t)}$,  it is easy to see that $X^\varepsilon(t)$ also solves the following decoupled SDE (not distribution dependent)
\begin{equation}\label{eqde}
dX^\varepsilon(t)=A^{\mu^\varepsilon}(t,X^\varepsilon(t))dt+\sqrt{\varepsilon}B^{\mu^\varepsilon}(t,X^\varepsilon(t))dW(t),
\end{equation}
where we denote $A^{\mu}(\cdot,\cdot)=A(\cdot,\cdot,\mu)$, $B^{\mu}(\cdot,\cdot)=B(\cdot,\cdot,\mu)$ for any $\mu\in\mathcal{P}(H)$.  By the assumptions of Theorem \ref{th3}, (\ref{eqde}) admits a unique strong solution by \cite{LR2}, meanwhile, we can get that $X^\varepsilon\in C([0,T];H)$, $\mathbb{P}$-a.s.. Hence, thanks to the infinite-dimensional version of Yamada-Watanabe theorem \cite{RSZ}, there exists a measurable map $\mathcal{G}_{\mu^\varepsilon}:C([0,T];U_1)\to \mathcal{E}$, where we denote $\mathcal{E}=C([0,T];H)$ throughout this section, such that the solution of (\ref{eqde}) could be represented by
\begin{equation*}
X^\varepsilon=\mathcal{G}_{\mu^\varepsilon}(\sqrt{\varepsilon}W({\cdot})).
\end{equation*}
For convenience  we denote $\mathcal{G}^\varepsilon=\mathcal{G}_{\mu^\varepsilon}$.  Then for any $\phi^\varepsilon\in\mathcal{A}_M$, we consider the process
$$X^{\varepsilon,\phi^\varepsilon}=\mathcal{G}^\varepsilon\Big(\sqrt{\varepsilon}W({\cdot})+\int_0^{\cdot}\phi^\varepsilon(s)ds\Big),$$
then  it solves the following control problem
\begin{eqnarray}\label{eqc}
dX^{\varepsilon,\phi^\varepsilon}(t)=~~\!\!\!\!\!\!\!\!&&A(t,X^{\varepsilon,\phi^\varepsilon}(t),\mathcal{L}_{X^{\varepsilon}(t)})dt+B(t,X^{\varepsilon,\phi^\varepsilon}(t),\mathcal{L}_{X^{\varepsilon}(t)})\phi^\varepsilon(t)dt
\nonumber\\
\!\!\!\!\!\!\!\!&&
+\sqrt{\varepsilon}B(t,X^{\varepsilon,\phi^\varepsilon}(t),\mathcal{L}_{X^{\varepsilon}(t)})dW(t),~X^{\varepsilon,\phi^\varepsilon}(0)=x,
\end{eqnarray}
where  $\mathcal{L}_{X^{\varepsilon}(t)}$ is the law of solution of (\ref{eqsm}). By a standard argument, we know that (\ref{eqc}) admits a unique solution following from the Girsanov's transformation.

Now we introduce the skeleton equation associated with (\ref{eqsm}) as follows,
\begin{equation}\label{eqsk}
\frac{d}{dt}\bar{X}^{\phi}(t)=A(t,\bar{X}^{\phi}(t),\mathcal{L}_{X^{0}(t)})+B(t,\bar{X}^{\phi}(t),\mathcal{L}_{X^{0}(t)})\phi(t),~\bar{X}^{\phi}(0)=x,
\end{equation}
where $\phi\in L^2([0,T];U)$, $\mathcal{L}_{X^{0}(t)}$ is the Dirac measure of the solution of (\ref{eq8}). The existence and uniqueness of solutions to (\ref{eqsk}) will be proved later, which implies that there exists a  map $\mathcal{G}^0:C([0,T];U_1)\to \mathcal{E}$ such that
\begin{equation*}\label{g1}
\mathcal{G}^0(h):=\left\{ \begin{aligned}
&\bar{X}^{\phi},~~\text{if}~h=\int_0^{\cdot}\phi(s)ds~\text{for some}~\phi\in L^2([0,T];U);\\
&0,~~~~\text{otherwise}.
\end{aligned} \right.
\end{equation*}
In the sequel, we aim to prove that the aforementioned measurable maps $\mathcal{G}^\varepsilon$ and $\mathcal{G}^0$ satisfy  \textbf{Condition (A)}.

\subsection{A priori estimates}
In this subsection, we derive the existence and uniqueness of solutions to the skeleton equation (\ref{eqsk}). Some necessary a priori estimates are also obtained.

\begin{lemma}
Suppose that all assumptions in Theorem  \ref{th3} hold. For every $x\in H$ and $\phi\in L^2([0,T];U)$, there exists a unique solution $\{\bar{X}^{\phi}(t)\}_{t\in[0, T]}$ to (\ref{eqsk}). Moreover, there exists $C_{T,M,x}>0$,
\begin{equation}\label{es33}
\sup_{\phi\in S_M}\Big\{\sup_{t\in[0,T]}\|\bar{X}^{\phi}(t)\|_{H}^2+\int_0^T\|\bar{X}^{\phi}(t)\|_{V}^{\alpha}dt\Big\}\leq C_{T,M,x}.
\end{equation}
Furthermore, for any $p\in(\eta,\infty)\cap[\beta+2,\infty)$, we have
\begin{equation}\label{es51}
\sup_{\phi\in S_M}\Big\{\sup_{t\in[0,T]}\|\bar{X}^{\phi}(t)\|_{H}^p+\int_0^T\|\bar{X}^{\phi}(t)\|_{H}^{p-2}\|\bar{X}^{\phi}(t)\|_{V}^{\alpha}dt\Big\}\leq C_{T,M,x}.
\end{equation}
\end{lemma}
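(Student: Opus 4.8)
The plan is to exploit the fact that, in contrast to the control equation \eref{eqc}, the skeleton equation \eref{eqsk} is \emph{not} genuinely distribution dependent. Indeed, by Theorem \ref{th2} applied to the limiting equation \eref{eq8} (with $B\equiv 0$), \eref{eq8} has a unique solution $X^0\in C([0,T];H)\cap L^\alpha([0,T];V)$ with $\sup_{t\in[0,T]}\|X^0(t)\|_H<\infty$; hence $t\mapsto \mathcal{L}_{X^0(t)}=\delta_{X^0(t)}$ is a fixed deterministic curve in $\mathcal{P}_\theta(H)$ with $\sup_{t\in[0,T]}\mathcal{L}_{X^0(t)}\big(\|\cdot\|_H^\theta\big)=\sup_{t\in[0,T]}\|X^0(t)\|_H^\theta<\infty$. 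I would therefore freeze the measure, setting $A_0(t,v):=A(t,v,\delta_{X^0(t)})$, $B_0(t,v):=B(t,v,\delta_{X^0(t)})$, and for fixed $\phi\in S_M$ absorb the control term into the drift via $\mathcal{A}^\phi(t,v):=A_0(t,v)+B_0(t,v)\phi(t)$, a (deterministic) map $[0,T]\times V\to V^*$. Then \eref{eqsk} becomes the classical variational evolution equation $\frac{d}{dt}\bar X^\phi(t)=\mathcal{A}^\phi(t,\bar X^\phi(t))$ in $V^*$, $\bar X^\phi(0)=x$.

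The first step is to verify that $\mathcal{A}^\phi$ satisfies the standard locally monotone, generalized coercivity hypotheses of the deterministic variational theory (cf.\ \cite{LR1,LR2,LR13}; this is the special case $B\equiv 0$ of the framework behind Theorem \ref{th2}). Hemicontinuity of $v\mapsto{}_{V^*}\langle\mathcal{A}^\phi(t,v),w\rangle_V$ comes from $(\mathbf{H1})$ for the $A_0$ part, while the contribution of $B_0(t,\cdot)\phi(t)$ is handled, exactly as in the proof of Proposition \ref{pro5}, by a Galerkin approximation followed by a monotonicity (``subtracting a test function'') argument rather than by strong continuity of $B$. Since $\mu=\nu=\delta_{X^0(t)}$ makes the Wasserstein terms in $(\mathbf{H2})$ vanish, local monotonicity reduces to $2{}_{V^*}\langle A_0(t,u)-A_0(t,v),u-v\rangle_V+\|B_0(t,u)-B_0(t,v)\|_{L_2(U,H)}^2\le\big(C+\rho(v)+C\|X^0(t)\|_H^\theta\big)\|u-v\|_H^2$. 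For coercivity, $(\mathbf{H3})$ together with $2\langle B_0(t,v)\phi(t),v\rangle_H\le\|B_0(t,v)\|_{L_2(U,H)}^2\|\phi(t)\|_U^2+\|v\|_H^2$ and the growth bound on $B$ in $(\mathbf{H4})$ give $2{}_{V^*}\langle\mathcal{A}^\phi(t,v),v\rangle_V+\delta\|v\|_V^\alpha\le C\big(1+\|\phi(t)\|_U^2\big)\|v\|_H^2+g_\phi(t)$, where $g_\phi\in L^1([0,T])$ is built from $f$, $\|X^0(\cdot)\|_H^2$ and $\|\phi(\cdot)\|_U^2$. The crucial point is that the time-weight $t\mapsto C(1+\|\phi(t)\|_U^2)$ is in $L^1([0,T])$ and that both its $L^1$-norm and $\|g_\phi\|_{L^1([0,T])}$ are bounded uniformly in $\phi\in S_M$ (using $\int_0^T\|\phi(t)\|_U^2dt\le M$ and $\sup_{t\in[0,T]}f(t)<\infty$), so the generalized coercivity with $L^1$-in-time coefficients holds uniformly over $S_M$; the growth bound on $A_0$ from $(\mathbf{H4})$ and $\|B_0(t,v)\phi(t)\|_{V^*}\le c\|B_0(t,v)\|_{L_2(U,H)}\|\phi(t)\|_U$ likewise control $\|\mathcal{A}^\phi(t,v)\|_{V^*}$. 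With these conditions the variational theory yields a solution $\bar X^\phi\in C([0,T];H)\cap L^\alpha([0,T];V)$; uniqueness follows directly, since for two solutions the difference $Z$ satisfies $\frac{d}{dt}\|Z(t)\|_H^2\le\big(C+\rho(\bar X^\phi_2(t))+C\|X^0(t)\|_H^\theta+\|\phi(t)\|_U^2\big)\|Z(t)\|_H^2$ with an $L^1$ coefficient (here $\rho(\bar X^\phi_2(\cdot))\in L^1$ by \eref{es22} and $\bar X^\phi_2\in L^\alpha([0,T];V)\cap L^\infty([0,T];H)$), whence $Z\equiv0$ by Gronwall.

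For the a priori bounds, I would apply the chain rule for $\|\cdot\|_H^2$ in the Gelfand triple to $\bar X^\phi$ and insert the coercivity estimate, obtaining $\|\bar X^\phi(t)\|_H^2+\delta\int_0^t\|\bar X^\phi(s)\|_V^\alpha ds\le\|x\|_H^2+\int_0^t C\big(1+\|\phi(s)\|_U^2\big)\|\bar X^\phi(s)\|_H^2ds+\int_0^t g_\phi(s)ds$; Gronwall's lemma then gives \eref{es33} with $C_{T,M}$ independent of $\phi\in S_M$. For \eref{es51}, I would apply the chain rule to $\|\bar X^\phi(t)\|_H^p=\big(\|\bar X^\phi(t)\|_H^2\big)^{p/2}$, bound the resulting right-hand side by Young's inequality — using the bound \eref{es33} already obtained to absorb the factors $\|\bar X^\phi(s)\|_H^{p-2}$ and the $L^\infty$-in-time bounds on $\|X^0(\cdot)\|_H$ and $f$ to control the inhomogeneous terms — and conclude again by a Gronwall inequality whose time-coefficient has $L^1$-norm controlled by $M$, yielding \eref{es51} uniformly over $S_M$.

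I expect the main obstacle to be the careful bookkeeping around the control term $B_0(t,\bar X^\phi(t))\phi(t)$: one must keep all time-weights in $L^1([0,T])$ — which forces exploiting $\phi\in L^2([0,T];U)$, $f\in L^\infty([0,T])$ and the boundedness of $\|X^0(\cdot)\|_H$ rather than cruder estimates — and must track that every constant depends on $\phi$ only through $\int_0^T\|\phi(t)\|_U^2dt\le M$, so that \eref{es33} and \eref{es51} are genuinely uniform over $S_M$. A secondary point requiring care is justifying that the frozen curve $t\mapsto\delta_{X^0(t)}$ has the measurability and moment regularity needed to invoke the deterministic variational theory, which is precisely what Theorem \ref{th2} applied to \eref{eq8} supplies.
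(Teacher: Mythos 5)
Your proposal is correct and follows essentially the same route as the paper: freeze the measure at $\delta_{X^0(t)}$ so that (\ref{eqsk}) becomes a non-distribution-dependent controlled evolution equation, obtain well-posedness from the deterministic locally monotone variational theory (the paper delegates exactly this step to \cite[Theorem 2.1, Lemma 3.1]{LTZ}, whose hypotheses you verify by hand), and then derive (\ref{es33}) and (\ref{es51}) via the integration-by-parts/chain rule for $\|\cdot\|_H^2$ and $\|\cdot\|_H^p$ combined with Gronwall, tracking that all constants depend on $\phi$ only through $\int_0^T\|\phi(t)\|_U^2\,dt\le M$.
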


\begin{proof}
Let $\mu^0(t)=\mathcal{L}_{X^{0}(t)}$. Note that (\ref{eqsk}) is equivalent to
\begin{equation}\label{eq11}
\frac{d}{dt}\bar{X}^{\phi}(t)=A^{\mu^0}(t,\bar{X}^{\phi}(t))+B^{\mu^0}(t,\bar{X}^{\phi}(t))\phi(t),~\bar{X}^{\phi}(0)=x,
\end{equation}
where $A^{\mu^0},B^{\mu^0}$ is defined as in (\ref{eqde}). It is easy to infer that $A^{\mu^0},B^{\mu^0}$  satisfy the assumptions in \cite[Theorem 2.1]{LTZ}, then the existence and uniqueness of solutions to (\ref{eq11}) follows directly from \cite[Lemma 3.1]{LTZ},  hence (\ref{eqsk}) also admits a unique solution. It suffices to prove the uniform estimates  (\ref{es33}) and (\ref{es51}).

By the  integration by parts formula and $(\mathbf{H3})$-$(\mathbf{H4})$, we deduce that
\begin{eqnarray*}
\frac{d}{dt}\|\bar{X}^{\phi}(t)\|_{H}^2=~~\!\!\!\!\!\!\!\!&&2{}_{V^*}\langle A(t,\bar{X}^{\phi}(t),\mathcal{L}_{X^{0}(t)}),\bar{X}^{\phi}(t)\rangle_{V}+2\langle B(t,\bar{X}^{\phi}(t),\mathcal{L}_{X^{0}(t)})\phi(t),\bar{X}^{\phi}(t)\rangle_{H}
\nonumber\\
\leq~~\!\!\!\!\!\!\!\!&&-\delta\|\bar{X}^{\phi}(t)\|_{V}^{\alpha}+C(1+\|\phi(t)\|_U^2)\|\bar{X}^{\phi}(t)\|_{H}^2+C\big(1+\mathcal{L}_{X^{0}(t)}(\|\cdot\|_H^2)\big).
\end{eqnarray*}
Note that $X^0\in C([0,T];H)$ and $\mathcal{L}_{X^{0}(t)}(\|\cdot\|_H^2)=\|X^0(t)\|_H^2$. For any $\phi\in S_M$,  by applying Gronwall' lemma we have
\begin{eqnarray*}
\!\!\!\!\!\!\!\!&&\sup_{t\in[0,T]}\|\bar{X}^{\phi}(t)\|_{H}^2+\delta\int_0^T\|\bar{X}^{\phi}(t)\|_{V}^{\alpha}dt
\nonumber\\
\leq~~\!\!\!\!\!\!\!\!&&C_T\exp\Big\{\int_0^T\big(1+\|\phi(t)\|_U^2\big)dt\Big\}\big(1+\|x\|_{H}^2+\sup_{t\in[0,T]}\|X^0(t)\|_H^2\big)
\nonumber\\
\leq~~\!\!\!\!\!\!\!\!&&C_{T,M}\big(1+\|x\|_{H}^2+\sup_{t\in[0,T]}\|X^0(t)\|_H^2\big).
\end{eqnarray*}
In order to prove (\ref{es51}),  note that by $(\mathbf{H3})$ and $X^0\in C([0,T];H)$,
\begin{eqnarray*}
\|\bar{X}^{\phi}(t)\|_H^p
=~~\!\!\!\!\!\!\!\!&&\|x\|_H^p
+p\int_0^t\|\bar{X}^{\phi}(s)\|_H^{p-2}{}_{V^*}\langle A(s,\bar{X}^{\phi}(s),\mathcal{L}_{X^{0}(s)}),\bar{X}^{\phi}(s)\rangle_Vds
\nonumber \\
\!\!\!\!\!\!\!\!&&+p\int_0^t\|\bar{X}^{\phi}(s)\|_H^{p-2}\langle B(s,\bar{X}^{\phi}(s),\mathcal{L}_{X^{0}(s)})\phi(s),\bar{X}^{\phi}(s)\rangle_Hds
\nonumber \\
\leq~~\!\!\!\!\!\!\!\!&&C(1+\|x\|_H^p)-\frac{p\delta}{2}\int_0^t\|\bar{X}^{\phi}(s)\|_H^{p-2}\|\bar{X}^{\phi}(s)\|_V^{\alpha}ds
\nonumber \\
\!\!\!\!\!\!\!\!&&
+C_p\int_0^t\|\bar{X}^{\phi}(s)\|_H^{p-2}\big(1+\|\bar{X}^{\phi}(s)\|_H^2+\|X^{0}(s)\|_H^2\big)ds
\nonumber \\
\!\!\!\!\!\!\!\!&&
+C_p\int_0^t\|\bar{X}^{\phi}(s)\|_H^{p-1}\big(1+\|\bar{X}^{\phi}(s)\|_H+\|X^{0}(s)\|_H\big)\|\phi(s)\|_Uds
\nonumber \\
\leq~~\!\!\!\!\!\!\!\!&&C_T(1+\|x\|_H^p)+C_p\int_0^t\|\bar{X}^{\phi}(s)\|_H^{p}\|\phi(s)\|_Uds+C_{p,T}\Big(\int_0^t\|\phi(s)\|_U^2ds\Big)^{\frac{1}{2}}.
\end{eqnarray*}
Then due to the definition of $S_M$ and Gronwall's lemma, it is easy to show that
\begin{equation*}
\sup_{\phi\in S_M}\Big\{\sup_{t\in[0,T]}\|\bar{X}^{\phi}(t)\|_{H}^p+\int_0^T\|\bar{X}^{\phi}(t)\|_{H}^{p-2}\|\bar{X}^{\phi}(t)\|_{V}^{\alpha}dt\Big\}\leq C_{T,M}(1+\|x\|_H^p).
\end{equation*}
The proof is complete.
\end{proof}

The following lemma will play an important role in proving  \textbf{Condition (A)}(i).
\begin{lemma}\label{lem6}
There exists a constant $C_T>0$ such that for any $\varepsilon>0$,
 $$\mathbb{E}\Big[\sup_{t\in[0,T]}\|X^{\varepsilon}(t)-X^0(t)\|_H^2\Big]\leq C_T\varepsilon.$$
\end{lemma}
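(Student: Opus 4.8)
The plan is to compare $X^\varepsilon$ with $X^0$ directly via It\^o's formula for $\|X^\varepsilon(t)-X^0(t)\|_H^2$ in the Gelfand-triple sense (cf.~\cite[Theorem 4.2.5]{LR1}), which is licit because both processes lie in $L^\alpha([0,T];V)\cap C([0,T];H)$ --- the former by the energy estimate (\ref{es37}) of Theorem \ref{th2} applied to (\ref{eqsm}) with $X^\varepsilon(0)=x$, the latter by the remark following (\ref{eq8}). Writing $Z^\varepsilon(t):=X^\varepsilon(t)-X^0(t)$ and recalling that $X^0$ is a deterministic path carrying no martingale part, It\^o's formula gives, for $t\in[0,T]$,
\begin{align*}
\|Z^\varepsilon(t)\|_H^2
=&\ 2\int_0^t {}_{V^*}\big\langle A(s,X^\varepsilon(s),\mathcal{L}_{X^\varepsilon(s)})-A(s,X^0(s),\mathcal{L}_{X^0(s)}),Z^\varepsilon(s)\big\rangle_V\,ds\\
&+\varepsilon\int_0^t\|B(s,X^\varepsilon(s),\mathcal{L}_{X^\varepsilon(s)})\|_{L_2(U,H)}^2\,ds\\
&+2\sqrt{\varepsilon}\int_0^t\big\langle Z^\varepsilon(s),B(s,X^\varepsilon(s),\mathcal{L}_{X^\varepsilon(s)})\,dW(s)\big\rangle_H.
\end{align*}

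To handle the first term I would apply the local monotonicity $(\mathbf{H2})$ with $u=X^\varepsilon(s)$, $v=X^0(s)$, $\mu=\mathcal{L}_{X^\varepsilon(s)}$, $\nu=\mathcal{L}_{X^0(s)}=\delta_{X^0(s)}$, discard the nonnegative $\|B(s,X^\varepsilon(s),\mathcal{L}_{X^\varepsilon(s)})-B(s,X^0(s),\mathcal{L}_{X^0(s)})\|_{L_2(U,H)}^2$ contribution on the left-hand side of $(\mathbf{H2})$, and use $\nu(\|\cdot\|_H^\theta)=\|X^0(s)\|_H^\theta\le M_0:=\sup_{s\in[0,T]}\|X^0(s)\|_H^\theta<\infty$ together with the elementary bound $\mathbb{W}_{2,H}(\mathcal{L}_{X^\varepsilon(s)},\mathcal{L}_{X^0(s)})^2\le\mathbb{E}\|Z^\varepsilon(s)\|_H^2$. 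This shows the first term is at most $\int_0^t g(s)\|Z^\varepsilon(s)\|_H^2\,ds+C(1+M_0)\int_0^t\mathbb{E}\|Z^\varepsilon(s)\|_H^2\,ds$, where $g(s):=C+CM_0+\rho(X^0(s))$. Crucially, $g\in L^1([0,T])$: indeed, the growth condition (\ref{es22}) on $\rho$ gives $\int_0^T\rho(X^0(s))\,ds\le C\big(1+\sup_{s\in[0,T]}\|X^0(s)\|_H^\beta\big)\int_0^T\big(1+\|X^0(s)\|_V^\alpha\big)\,ds<\infty$ since $X^0\in L^\alpha([0,T];V)\cap C([0,T];H)$.

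For the remaining two terms, with the drift difference controlled, I would first note the uniform-in-$\varepsilon$ energy bound $\sup_{\varepsilon\in(0,1]}\mathbb{E}\big[\sup_{t\in[0,T]}\|X^\varepsilon(t)\|_H^2\big]\le C_T$, which follows from the coercivity $(\mathbf{H3})$ exactly as in Lemma \ref{lem3}, since rescaling $B$ by $\sqrt{\varepsilon}\le1$ only improves the estimate. Then by $(\mathbf{H4})$ one has $\varepsilon\,\mathbb{E}\int_0^T\|B(s,X^\varepsilon(s),\mathcal{L}_{X^\varepsilon(s)})\|_{L_2(U,H)}^2\,ds\le C_T\varepsilon$, and by the Burkholder--Davis--Gundy inequality followed by Young's inequality,
$$2\sqrt{\varepsilon}\,\mathbb{E}\Big[\sup_{s\in[0,t]}\Big|\int_0^s\big\langle Z^\varepsilon(r),B(r,X^\varepsilon(r),\mathcal{L}_{X^\varepsilon(r)})\,dW(r)\big\rangle_H\Big|\Big]\le\frac{1}{2}\mathbb{E}\Big[\sup_{s\in[0,t]}\|Z^\varepsilon(s)\|_H^2\Big]+C_T\varepsilon.$$
Combining everything, setting $\varphi^\varepsilon(t):=\mathbb{E}\big[\sup_{s\in[0,t]}\|Z^\varepsilon(s)\|_H^2\big]$ and bounding $\|Z^\varepsilon(s)\|_H^2\le\sup_{r\in[0,s]}\|Z^\varepsilon(r)\|_H^2$ inside the time integrals leads to $\varphi^\varepsilon(t)\le 2\int_0^t\big(g(s)+C(1+M_0)\big)\varphi^\varepsilon(s)\,ds+2C_T\varepsilon$ for $t\in[0,T]$, and Gronwall's inequality with the integrable coefficient $g(\cdot)+C(1+M_0)$ yields $\varphi^\varepsilon(T)\le 2C_T\varepsilon\exp\big(2\int_0^T(g(s)+C(1+M_0))\,ds\big)=:C_T'\varepsilon$, which is the assertion.

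The main technical obstacle is making the It\^o formula rigorous in the variational framework while keeping all integrals finite: one should run the computation up to the stopping times $\tau_R^\varepsilon:=\inf\{t\in[0,T]:\|X^\varepsilon(t)\|_H\ge R\}\wedge T$ (as in the proof of Lemma \ref{lem4}), obtain the estimate with all constants independent of $R$, and pass to the limit $R\to\infty$ by Fatou's lemma. The other points needing (routine) care are the uniform-in-$\varepsilon$ energy bound and the $L^1([0,T])$-integrability of $\rho(X^0(\cdot))$, both of which reduce to the estimates of Section \ref{sec4} together with (\ref{es22}).
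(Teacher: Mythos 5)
Your proposal is correct and follows essentially the same route as the paper's proof: Itô's formula for $\|Z^\varepsilon\|_H^2$, the local monotonicity $(\mathbf{H2})$ with $\nu=\delta_{X^0(s)}$ and $\mathbb{W}_{2,H}^2\le\mathbb{E}\|Z^\varepsilon\|_H^2$ (discarding the nonnegative diffusion-difference term), $(\mathbf{H4})$ plus the energy bounds for the $\varepsilon$-term, BDG with Young for the stochastic integral, and Gronwall with the coefficient $\rho(X^0(\cdot))+\|X^0(\cdot)\|_H^\theta$ made integrable via (\ref{es22}) and $X^0\in L^\alpha([0,T];V)\cap C([0,T];H)$. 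The additional remarks on stopping times and uniformity in $\varepsilon$ of the energy estimate are routine refinements of the same argument.
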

\begin{proof}
Let $Z^\varepsilon(t)=X^{\varepsilon}(t)-X^0(t)$. Using It\^{o}'s formula gives that
\begin{eqnarray}\label{es34}
\|Z^\varepsilon(t)\|_H^2=~~\!\!\!\!\!\!\!\!&&2\int_0^t{}_{V^*}\langle A(s,X^{\varepsilon}(s),\mathcal{L}_{X^{\varepsilon}(s)})-A(s,X^0(s),\mathcal{L}_{X^0(s)}),Z^\varepsilon(s)\rangle_Vds
\nonumber\\
\!\!\!\!\!\!\!\!&&~~+2\sqrt{\varepsilon}\int_0^t\langle B(s,X^{\varepsilon}(s),\mathcal{L}_{X^{\varepsilon}(s)})dW(s),Z^\varepsilon(s)\rangle_H
\nonumber\\
\!\!\!\!\!\!\!\!&&
~~+\varepsilon\int_0^t\|B(s,X^{\varepsilon}(s),\mathcal{L}_{X^{\varepsilon}(s)})\|_{L_2(U,H)}^2ds
\nonumber\\
=:~~\!\!\!\!\!\!\!\!&&\sum_{i=1}^3 \mathcal{I}_i(t).
\end{eqnarray}
By $(\mathbf{H2})$, for $\mathcal{I}_1(t)$ we have
\begin{eqnarray*}
\mathbb{E}\Big[\sup_{t\in[0,T]}\mathcal{I}_1(t)\Big]\leq~~\!\!\!\!\!\!\!\!&& \mathbb{E}\int_0^T\big(C+\rho(X^0(t))+C\|X^0(t)\|_H^{\theta}\big)\|Z^\varepsilon(t)\|_H^2dt
\nonumber\\
\!\!\!\!\!\!\!\!&&
~~~~+C\int_0^T(1+\|X^0(t)\|_H^{\theta})\mathbb{E}\|Z^\varepsilon(t)\|_H^2dt
\nonumber\\
\leq~~\!\!\!\!\!\!\!\!&&C\int_0^T\big(1+\rho(X^0(t))+\|X^0(t)\|_H^{\theta}\big)\mathbb{E}\|Z^\varepsilon(t)\|_H^2dt,
\end{eqnarray*}
where we used the fact that $X^0(t)$ is deterministic. In addition, we recall (\ref{es37}) that $X^0(t),X^\varepsilon(t)$ satisfy the following energy estimates, respectively,
\begin{equation}\label{es36}
\sup_{t\in[0,T]}\|X^0(t)\|_H+\int_0^T\|X^0(t)\|_V^{\alpha}dt<\infty
\end{equation}
and
\begin{equation}\label{es42}
\mathbb{E}\Big[\sup_{t\in[0,T]}\|X^\varepsilon(t)\|_H^2\Big]+\mathbb{E}\int_0^T\|X^\varepsilon(t)\|_V^{\alpha}dt<\infty.
\end{equation}

As for $\mathcal{I}_3(t)$, we have the following control
\begin{eqnarray*}
\mathbb{E}\Big[\sup_{t\in[0,T]}\mathcal{I}_3(t)\Big]\leq~~\!\!\!\!\!\!\!\!&&C\varepsilon\mathbb{E}\int_0^T(1+\|X^{\varepsilon}(t)\|_H^2+\mathbb{E}\|X^{\varepsilon}(t)\|_H^2)dt
\nonumber\\
\leq~~\!\!\!\!\!\!\!\!&&C_T\varepsilon.
\end{eqnarray*}
For the term $\mathcal{I}_2(t)$, using Burkholder-Davis-Gundy's inequality we have
\begin{eqnarray}\label{es39}
\mathbb{E}\Big[\sup_{t\in[0,T]}\mathcal{I}_2(t)\Big]\leq~~\!\!\!\!\!\!\!\!&&
 C\sqrt{\varepsilon}\mathbb{E}\Big[\int_0^T\|B(t,X^{\varepsilon}(t),\mathcal{L}_{X^{\varepsilon}(t)})\|_{L_2(U,H)}^2\|Z^\varepsilon(t)\|_H^2dt\Big]^{\frac{1}{2}}
\nonumber\\
\leq~~\!\!\!\!\!\!\!\!&&\frac{1}{2}\mathbb{E}\Big[\sup_{t\in[0,T]}\|Z^\varepsilon(t)\|_H^2\Big]+C_T\varepsilon.
\end{eqnarray}
In view of (\ref{es34})-(\ref{es39}), by Gronwall's lemma and (\ref{es22}) we obtain that
\begin{eqnarray*}
\mathbb{E}\Big[\sup_{t\in[0,T]}\|Z^{\varepsilon}(t)\|_H^2\Big]
\leq~~ \!\!\!\!\!\!\!\!&&C_T\varepsilon\exp\Big\{\int_0^T\big(1+\rho(X^0(t))+\|X^0(t)\|_H^{\theta}\big)dt\Big\}
\nonumber\\
\leq~~\!\!\!\!\!\!\!\!&&C_T\varepsilon.
\end{eqnarray*}
Then we conclude the desired estimate.
\end{proof}

\vspace{3mm}
To prove \textbf{Condition (A)} (ii), the following lemma is needed for the estimate of integral of time
increment to the solution of skeleton equation (\ref{eqsk}).
\begin{lemma}\label{lem7}
For any $x\in H$ and $\phi\in S_M$, there exists a constant $C_{T,M,x}>0$ such that
$$\int_0^{T}\|\bar{X}^{\phi}(t)-\bar{X}^{\phi}(t(\Delta))\|_{H}^2dt\leq C_{T,M,x}\Delta,$$
where $\Delta>0$ is a small enough constant and $t(\Delta):=[\frac{t}{\Delta}]\Delta$  (here $[s]$ denote the largest integer smaller than $s$).
\end{lemma}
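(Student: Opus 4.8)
The plan is to run the deterministic chain rule for $\|\cdot\|_H^2$ in the Gelfand triple on each mesh interval and then integrate in time, so as to extract the factor $\Delta$; the inputs are the a priori bounds already established for the skeleton equation.

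First I would record the consequences of the preceding a priori estimate. Writing $\mu^0(t):=\mathcal{L}_{X^0(t)}=\delta_{X^0(t)}$ (so $\mu^0(t)(\|\cdot\|_H^{\theta})=\|X^0(t)\|_H^{\theta}$ is bounded on $[0,T]$ because $X^0\in C([0,T];H)$), the bounds (\ref{es33})--(\ref{es51}) together with $(\mathbf{H3})$, $(\mathbf{H4})$ and $\sup_{t}f(t)<\infty$ give a constant $C_{T,M}$, uniform over $\phi\in S_M$, with
\[
\sup_{t\in[0,T]}\|\bar{X}^{\phi}(t)\|_H+\int_0^T\|\bar{X}^{\phi}(t)\|_V^{\alpha}\,dt+\int_0^T\|A(t,\bar{X}^{\phi}(t),\mu^0(t))\|_{V^*}^{\frac{\alpha}{\alpha-1}}\,dt+\sup_{t\in[0,T]}\|B(t,\bar{X}^{\phi}(t),\mu^0(t))\|_{L_2(U,H)}\leq C_{T,M}.
\]
Abbreviate $A(s):=A(s,\bar{X}^{\phi}(s),\mu^0(s))$, $B(s):=B(s,\bar{X}^{\phi}(s),\mu^0(s))$, and put $t_k=k\Delta$, so $t(\Delta)=t_k$ for $t\in[t_k,t_{k+1})$.

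Next, on each interval I would apply the chain rule for the square of the $H$-norm (the deterministic case of the It\^o formula \cite[Theorem 4.2.5]{LR1}) to $s\mapsto\bar{X}^{\phi}(s)-\bar{X}^{\phi}(t_k)$ and use (\ref{eqsk}):
\[
\|\bar{X}^{\phi}(t)-\bar{X}^{\phi}(t_k)\|_H^2=2\int_{t_k}^{t}{}_{V^*}\big\langle A(s)+B(s)\phi(s),\ \bar{X}^{\phi}(s)-\bar{X}^{\phi}(t_k)\big\rangle_V\,ds.
\]
Integrating over $t\in[t_k,t_{k+1})$ and applying Tonelli, each inner integral is over a set of length $\leq\Delta$, so that $\int_{t_k}^{t_{k+1}}\|\bar{X}^{\phi}(t)-\bar{X}^{\phi}(t_k)\|_H^2\,dt\leq 2\Delta\int_{t_k}^{t_{k+1}}|{}_{V^*}\langle A(s)+B(s)\phi(s),\bar{X}^{\phi}(s)-\bar{X}^{\phi}(t_k)\rangle_V|\,ds$. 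I would bound the pairing by estimating the $A$-part with $|{}_{V^*}\langle A(s),w\rangle_V|\leq\|A(s)\|_{V^*}(\|\bar{X}^{\phi}(s)\|_V+\|\bar{X}^{\phi}(t_k)\|_V)$ and the $B\phi$-part in $H$ by $2\sup_r\|\bar{X}^{\phi}(r)\|_H\,\|B(s)\|_{L_2(U,H)}\|\phi(s)\|_U\leq C_{T,M}\|\phi(s)\|_U$. Summing over $k$,
\[
\int_0^T\|\bar{X}^{\phi}(t)-\bar{X}^{\phi}(t(\Delta))\|_H^2\,dt\leq 2\Delta\Big[\int_0^T\|A(s)\|_{V^*}\big(\|\bar{X}^{\phi}(s)\|_V+\|\bar{X}^{\phi}(s(\Delta))\|_V\big)\,ds+C_{T,M}\int_0^T\|\phi(s)\|_U\,ds\Big],
\]
where $s(\Delta):=[\tfrac{s}{\Delta}]\Delta$. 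By H\"older's inequality (exponents $\tfrac{\alpha}{\alpha-1}$ and $\alpha$) and the a priori bounds the first term in brackets is $\leq C_{T,M}$; by Cauchy--Schwarz and $\phi\in S_M$ the last one is $\leq\sqrt{TM}$; and for the middle one I would use that $s\mapsto\bar{X}^{\phi}(s(\Delta))$ is the piecewise-constant interpolant, so $\int_0^T\|\bar{X}^{\phi}(s(\Delta))\|_V^{\alpha}\,ds=\sum_k\Delta\|\bar{X}^{\phi}(t_k)\|_V^{\alpha}$, which I would control (uniformly in small $\Delta$) by $\int_0^T\|\bar{X}^{\phi}(s)\|_V^{\alpha}\,ds\leq C_{T,M}$ after fixing a Borel version of $\bar{X}^{\phi}$ and, if necessary, shifting the mesh so that all nodes lie in the full-measure set $\{s:\bar{X}^{\phi}(s)\in V\}$ (harmless for the way the lemma is used below). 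One more application of H\"older then gives $\int_0^T\|A(s)\|_{V^*}\|\bar{X}^{\phi}(s(\Delta))\|_V\,ds\leq C_{T,M}$, and collecting the three contributions yields the assertion.

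The hard part will be precisely this treatment of the grid-node values $\bar{X}^{\phi}(t_k)$ in the $V$-norm: since $\bar{X}^{\phi}$ lies only in $L^{\alpha}([0,T];V)$, neither the applicability of the chain rule to the increment $\bar{X}^{\phi}-\bar{X}^{\phi}(t_k)$ nor the bound on $\sum_k\Delta\|\bar{X}^{\phi}(t_k)\|_V^{\alpha}$ is automatic, and these must be secured through the version/mesh-shift argument above (or, equivalently, by replacing each node value by an average of $\bar{X}^{\phi}$ over the adjacent sub-interval); everything else is a routine consequence of $(\mathbf{H3})$--$(\mathbf{H4})$ and the a priori estimates.
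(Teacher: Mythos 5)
You have put your finger on the right obstruction yourself, but the patches you propose do not close it, and the paper's proof is organized precisely so that the obstruction never arises. Your argument anchors each increment at the fixed grid nodes $t_k=k\Delta$: both the chain-rule identity for $\|\bar X^{\phi}(\cdot)-\bar X^{\phi}(t_k)\|_H^2$ and the bound $|{}_{V^*}\langle A(s),\bar X^{\phi}(s)-\bar X^{\phi}(t_k)\rangle_V|\leq \|A(s)\|_{V^*}\big(\|\bar X^{\phi}(s)\|_V+\|\bar X^{\phi}(t_k)\|_V\big)$ require $\bar X^{\phi}(t_k)\in V$, and your final step needs $\Delta\sum_k\|\bar X^{\phi}(t_k)\|_V^{\alpha}\leq C$ uniformly in $\Delta$. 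Since $\bar X^{\phi}$ is only (a version of) an element of $L^{\alpha}([0,T];V)$, its values on the Lebesgue-null set of nodes are not controlled: even after fixing a version with $\bar X^{\phi}(t_k)\in V$, Riemann sums of the $L^1$-function $\|\bar X^{\phi}(\cdot)\|_V^{\alpha}$ are not bounded by its integral uniformly in $\Delta$. The mesh-shift remedy (choosing, for each $\Delta$, a good shift by an averaging argument) proves a statement for a shifted grid rather than the lemma as stated, and it is not harmless downstream: in Proposition \ref{pro8} the lemma is invoked with the standard grid $t(\Delta)=[t/\Delta]\Delta$, uniformly over the whole family $\{\bar X^{\phi^n}\}_{n\geq1}$, and the same nodes $k\Delta$ enter the compactness argument for $\widetilde{\mathcal{H}}_5$, so a shift depending on which function is sampled would force a genuine reworking of that proof. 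The averaging remedy changes the quantity being estimated, and controlling the discrepancy $\|\bar X^{\phi}(t_k)-\frac{1}{\Delta}\int_{t_{k-1}}^{t_k}\bar X^{\phi}(s)\,ds\|_H$ is an estimate of exactly the same nature as the lemma itself, so as described that route is circular.

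The paper's proof avoids point evaluations in $V$ altogether. It uses the splitting $\|\bar X^{\phi}(t)-\bar X^{\phi}(t(\Delta))\|_H^2\leq 2\|\bar X^{\phi}(t)-\bar X^{\phi}(t-\Delta)\|_H^2+2\|\bar X^{\phi}(t(\Delta))-\bar X^{\phi}(t-\Delta)\|_H^2$ (the initial interval $[0,\Delta]$ being handled by the uniform $H$-bound), and applies the integration-by-parts formula on $[t-\Delta,t]$ with the anchor $t-\Delta$ moving continuously with $t$. After integrating in $t$, Fubini together with H\"older (exponents $\frac{\alpha}{\alpha-1}$ and $\alpha$) and $(\mathbf{H4})$ ensures that every $V$-norm of $\bar X^{\phi}$ appears only under a time integral, e.g. $\int_{\Delta}^{T}\|\bar X^{\phi}(t-\Delta)\|_V^{\alpha}\,dt\leq\int_0^{T}\|\bar X^{\phi}(u)\|_V^{\alpha}\,du$, which is exactly what (\ref{es33}) and (\ref{es51}) control; no value of $\bar X^{\phi}$ at a fixed time is ever needed in $V$. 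If you reorganize your proof along these lines (comparison through $t-\Delta$ and a second estimate for $\|\bar X^{\phi}(t(\Delta))-\bar X^{\phi}(t-\Delta)\|_H^2$), the remaining ingredients you use — the a priori bounds, the $L^{\frac{\alpha}{\alpha-1}}$ bound on $A$ from $(\mathbf{H4})$, the uniform bound on $B$ and Cauchy--Schwarz with $\phi\in S_M$ — go through essentially as you wrote them.
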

\begin{proof}
It is easy to see that
\begin{eqnarray}\label{14}
\!\!\!\!\!\!\!\!&&\int_0^{T}\|\bar{X}^{\phi}(t)-\bar{X}^{\phi}(t(\Delta))\|_{H}^2dt
\nonumber\\
\leq~~\!\!\!\!\!\!\!\!&&\int_0^{\Delta}\|\bar{X}^{\phi}(t)-x\|_{H}^2dt+
\int_{\Delta}^{T}\|\bar{X}^{\phi}(t)-\bar{X}^{\phi}(t(\Delta))\|_{H}^2dt
\nonumber\\
\leq~~\!\!\!\!\!\!\!\!&&2\Delta\|x\|_{H}^2+2\Delta\sup_{\phi\in S_M}\Big\{\sup_{t\in[0,T]}\|\bar{X}^{\phi}(t)\|_H^2\Big\}+2\int_{\Delta}^{T}\|\bar{X}^{\phi}(t)-\bar{X}^{\phi}(t-\Delta)\|_{H}^2dt
\nonumber\\
\!\!\!\!\!\!\!\!&&+2\int_{\Delta}^{T}\|\bar{X}^{\phi}(t(\Delta))-\bar{X}^{\phi}(t-\Delta)\|_{H}^2dt
\nonumber\\
\leq~~\!\!\!\!\!\!\!\!&&C_{T,M}\Delta(1+\|x\|_{H}^2)+2\int_{\Delta}^{T}\|\bar{X}^{\phi}(t)-\bar{X}^{\phi}(t-\Delta)\|_{H}^2dt
\nonumber\\
\!\!\!\!\!\!\!\!&&+2\int_{\Delta}^{T}\|\bar{X}^{\phi}(t(\Delta))-\bar{X}^{\phi}(t-\Delta)\|_{H}^2dt.
\end{eqnarray}
For the first integral term on the right side of (\ref{14}), the integration by parts formula yields that
\begin{eqnarray}\label{15}
\!\!\!\!\!\!\!\!&&\|\bar{X}^{\phi}(t)-\bar{X}^{\phi}(t-\Delta)\|_{H}^2
\nonumber\\
=~~\!\!\!\!\!\!\!\!&&2\int_{t-\Delta}^{t}{}_{V^*}\langle A(s,\bar{X}^{\phi}(s),\mathcal{L}_{X^0(s)}),\bar{X}^{\phi}(s)-\bar{X}^{\phi}(t-\Delta)\rangle_Vds
\nonumber\\
\!\!\!\!\!\!\!\!&&+2\int_{t-\Delta}^{t}\langle B(s,\bar{X}^{\phi}(s),\mathcal{L}_{X^0(s)})\phi(s),\bar{X}^{\phi}(s)-\bar{X}^{\phi}(t-\Delta)\rangle_Hds
\nonumber\\
=:~~\!\!\!\!\!\!\!\!&&\mathcal{R}_1(t)+\mathcal{R}_2(t).
\end{eqnarray}
Now we  estimate $\int_\Delta^{T}\mathcal{R}_i(t)dt$, $i=1,2$, respectively.  By $(\mathbf{H4})$ and H\"{o}lder's inequality we infer that
\begin{eqnarray}\label{16}
\!\!\!\!\!\!\!\!&&\int_\Delta^{T}\mathcal{R}_1(t)dt
\nonumber\\
\leq~~\!\!\!\!\!\!\!\!&&2\Big[\int_\Delta^{T}\int_{t-\Delta}^{t}\|A(s,\bar{X}^{\phi}(s),\mathcal{L}_{X^0(s)})\|_{V^*}^{\frac{\alpha}{\alpha-1}}dsdt\Big]^{\frac{\alpha-1}{\alpha}}
\nonumber\\
\!\!\!\!\!\!\!\!&&~~\cdot
\Big[\int_\Delta^{T}\int_{t-\Delta}^{t}\|\bar{X}^{\phi}(s)-\bar{X}^{\phi}(t-\Delta)\|_{V}^{\alpha}dsdt\Big]^{\frac{1}{\alpha}}
\nonumber\\
\leq~~\!\!\!\!\!\!\!\!&&C\Big[\Delta\int_0^{T}\big(1+\|\bar{X}^{\phi}(s)\|_V^{\alpha}+\|X^0(s)\|_H^{\theta}\big)\big(1+\|\bar{X}^{\phi}(s)\|_H^{\beta}+\|X^0(s)\|_H^{\theta}\big)ds\Big]^{\frac{\alpha-1}{\alpha}}
\nonumber\\
\!\!\!\!\!\!\!\!&&~~\cdot
\Big[\Delta\int_0^{T}\|\bar{X}^{\phi}(s)\|_V^{\alpha}ds\Big]^{\frac{1}{\alpha}}
\nonumber\\
\leq~~\!\!\!\!\!\!\!\!&&C_{T,M}\Delta,
\end{eqnarray}
where we used the estimates (\ref{es51}) and (\ref{es36}) in the last step.

Analogously, by $(\mathbf{H4})$ we have
\begin{eqnarray}\label{17}
\!\!\!\!\!\!\!\!&&\int_\Delta^{T}\mathcal{R}_2(t)dt
\nonumber\\
\leq~~\!\!\!\!\!\!\!\!&&2\Big[\int_\Delta^{T}\int_{t-\Delta}^{t}\|B(s,\bar{X}^{\phi}(s),\mathcal{L}_{X^0(s)})\|_{L_2(U,H)}^2\|\phi(s)\|_U^2dsdt\Big]^{\frac{1}{2}}
\nonumber\\
\!\!\!\!\!\!\!\!&&~~\cdot
\Big[\int_\Delta^{T}\int_{t-\Delta}^{t}\|\bar{X}^{\phi}(s)-\bar{X}^{\phi}(t-\Delta)\|_H^2dsdt\Big]^{\frac{1}{2}}
\nonumber\\
\leq~~\!\!\!\!\!\!\!\!&&C\Big[\Delta\int_0^{T}\big(1+\|\bar{X}^{\phi}(s)\|_H^2+\|X^0(s)\|_H^2\big)\|\phi(s)\|_U^2ds\Big]^{\frac{1}{2}}
\Big[\Delta\int_0^{T}\|\bar{X}^{\phi}(s)\|_H^2ds\Big]^{\frac{1}{2}}
\nonumber\\
\leq~~\!\!\!\!\!\!\!\!&&C_T\Delta\Big[\Big(1+\sup_{\phi\in S_M}\Big\{\sup_{t\in[0,T]}\|\bar{X}^{\phi}(t)\|_H^2\Big\}+\sup_{t\in[0,T]}\|X^0(t)\|_H^2\Big)\int_0^{T}\|\phi(s)\|_U^2ds\Big]^{\frac{1}{2}}
\nonumber\\
\!\!\!\!\!\!\!\!&&~~\cdot\Big[\int_0^{T}\|\bar{X}^{\phi}(s)\|_H^2ds\Big]^{\frac{1}{2}}
\nonumber\\
\leq~~\!\!\!\!\!\!\!\!&&C_{T,M}\Delta.
\end{eqnarray}
Combining (\ref{16})-(\ref{17}) with (\ref{15}), it implies that
\begin{equation}\label{18}
\int_\Delta^{T}\|\bar{X}^{\phi}(t)-\bar{X}^{\phi}(t-\Delta)\|_{H}^2dt\leq C_{T,M}\Delta.
\end{equation}
By a similar calculation as the proof of (\ref{18}), it is easy to deduce that
\begin{equation}\label{19}
\int_\Delta^{T}\|\bar{X}^{\phi}(t(\Delta))-\bar{X}^{\phi}(t-\Delta)\|_{H}^2dt\leq C_{T,M}\Delta.
\end{equation}
Finally, combining (\ref{18})-(\ref{19}) with (\ref{14}), we get the desired estimate.
\end{proof}
\subsection{Proof of LDP}
In this subsection, we devote to proving  that $\{X^\varepsilon\}_{\varepsilon>0}$ satisfies the LDP in $C([0,T]; H)$. In fact, it suffices to  prove \textbf{Condition (A)} (i) and (ii). The proof of \textbf{Condition (A)} (i) will be given in Proposition \ref{pro7} and \textbf{Condition (A)} (ii) will be established in Proposition \ref{pro8}.

\begin{proposition}\label{pro7}
Suppose that all assumptions in Theorem \ref{th3} hold. Let $\{\phi^\varepsilon: \varepsilon>0\}\subset \mathcal{A}_M$ for
any $M<\infty$. Then for any $\delta>0$ we have
$$\lim_{\varepsilon\to 0}\mathbb{P}\Big(\sup_{t\in[0,T]}\|X^{\varepsilon,\phi^\varepsilon}(t)-\bar{X}^{\phi^\varepsilon}(t)\|_H>\delta\Big)=0,$$
where $\bar{X}^{\phi^\varepsilon}$ is the solution of (\ref{eqsk}) with $\phi^\varepsilon$ replacing $\phi$.
\end{proposition}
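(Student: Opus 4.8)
The plan is to estimate $Z^\varepsilon(t):=X^{\varepsilon,\phi^\varepsilon}(t)-\bar X^{\phi^\varepsilon}(t)$ directly, exploiting that $X^{\varepsilon,\phi^\varepsilon}$ and $\bar X^{\phi^\varepsilon}$ carry the same control $\phi^\varepsilon$ and differ only through the frozen measure argument ($\mathcal L_{X^\varepsilon(t)}$ versus $\mathcal L_{X^0(t)}$) and through the small noise $\sqrt\varepsilon B\,dW$. Subtracting (\ref{eqsk}) from (\ref{eqc}) and applying It\^{o}'s formula to $\|Z^\varepsilon(t)\|_H^2$ in the Gelfand triple $V\subset H\subset V^*$, one writes $\|Z^\varepsilon(t)\|_H^2$ as the sum of: (a) $2\int_0^t{}_{V^*}\langle A(s,X^{\varepsilon,\phi^\varepsilon}(s),\mathcal L_{X^\varepsilon(s)})-A(s,\bar X^{\phi^\varepsilon}(s),\mathcal L_{X^0(s)}),Z^\varepsilon(s)\rangle_V\,ds$; (b) the cross term $2\int_0^t\langle\big(B(s,X^{\varepsilon,\phi^\varepsilon}(s),\mathcal L_{X^\varepsilon(s)})-B(s,\bar X^{\phi^\varepsilon}(s),\mathcal L_{X^0(s)})\big)\phi^\varepsilon(s),Z^\varepsilon(s)\rangle_H\,ds$; (c) the It\^{o} correction $\varepsilon\int_0^t\|B(s,X^{\varepsilon,\phi^\varepsilon}(s),\mathcal L_{X^\varepsilon(s)})\|_{L_2(U,H)}^2\,ds$; and (d) the martingale $2\sqrt\varepsilon\int_0^t\langle B(s,X^{\varepsilon,\phi^\varepsilon}(s),\mathcal L_{X^\varepsilon(s)})\,dW(s),Z^\varepsilon(s)\rangle_H$.

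For (a)+(b) I would apply Young's inequality to the cross term, bounding it by $\|B(s,X^{\varepsilon,\phi^\varepsilon}(s),\mathcal L_{X^\varepsilon(s)})-B(s,\bar X^{\phi^\varepsilon}(s),\mathcal L_{X^0(s)})\|_{L_2(U,H)}^2+\|\phi^\varepsilon(s)\|_U^2\|Z^\varepsilon(s)\|_H^2$, so that the sum (a)+(b) is majorized by the left-hand side of $(\mathbf{H2})$ (with $u=X^{\varepsilon,\phi^\varepsilon}(s)$, $v=\bar X^{\phi^\varepsilon}(s)$, $\mu=\mathcal L_{X^\varepsilon(s)}$, $\nu=\mathcal L_{X^0(s)}$) plus $\|\phi^\varepsilon(s)\|_U^2\|Z^\varepsilon(s)\|_H^2$. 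Since $X^0$ is deterministic, $\mathcal L_{X^0(s)}(\|\cdot\|_H^\theta)=\|X^0(s)\|_H^\theta$, and $\mathbb W_{2,H}(\mathcal L_{X^\varepsilon(s)},\mathcal L_{X^0(s)})^2\le\mathbb E\|X^\varepsilon(s)-X^0(s)\|_H^2\le C_T\varepsilon$ by Lemma \ref{lem6}. Hence $(\mathbf{H2})$ bounds (a)+(b) by $\psi^\varepsilon(s)\|Z^\varepsilon(s)\|_H^2+C_T\varepsilon(1+\|X^0(s)\|_H^\theta)$, where $\psi^\varepsilon(s):=C+\rho(\bar X^{\phi^\varepsilon}(s))+C\|X^0(s)\|_H^\theta+\|\phi^\varepsilon(s)\|_U^2$. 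The key point that replaces the unavailable localization argument is that $\psi^\varepsilon$ is $\omega$-a.s.\ integrable in time with a deterministic bound uniform over $\mathcal A_M$: by (\ref{es22}) and the skeleton estimates (\ref{es33})--(\ref{es51}) (recall $p\ge\beta+2$) one has $\int_0^T\rho(\bar X^{\phi^\varepsilon}(s))\,ds\le C\int_0^T(1+\|\bar X^{\phi^\varepsilon}(s)\|_V^\alpha)(1+\|\bar X^{\phi^\varepsilon}(s)\|_H^\beta)\,ds\le C_{T,M}$, while $\int_0^T\|\phi^\varepsilon(s)\|_U^2\,ds\le M$ a.s.\ and $X^0\in C([0,T];H)$; thus $\int_0^T\psi^\varepsilon(s)\,ds\le C_{T,M}$ a.s.

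I would then absorb the $\psi^\varepsilon$-term via the exponential weight $\Gamma^\varepsilon(t):=\exp\big(-\int_0^t\psi^\varepsilon(s)\,ds\big)$, exactly as in the pathwise-uniqueness proofs of Propositions \ref{pro4} and \ref{pro6}: the product rule cancels the $\psi^\varepsilon\|Z^\varepsilon\|_H^2$ contribution and leaves $\Gamma^\varepsilon(t)\|Z^\varepsilon(t)\|_H^2$ bounded by $C_T\varepsilon\int_0^t\Gamma^\varepsilon(s)(1+\|X^0(s)\|_H^\theta)\,ds+\varepsilon\int_0^t\Gamma^\varepsilon(s)\|B(s,X^{\varepsilon,\phi^\varepsilon}(s),\mathcal L_{X^\varepsilon(s)})\|_{L_2(U,H)}^2\,ds$ plus a martingale with weight $\Gamma^\varepsilon$. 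Taking $\sup_{t\le T}$ and expectations, applying Burkholder-Davis-Gundy to the martingale term and Young's inequality to absorb $\tfrac12\mathbb E[\sup_{t\le T}\Gamma^\varepsilon(t)\|Z^\varepsilon(t)\|_H^2]$, and bounding $\mathbb E\int_0^T\|B(s,X^{\varepsilon,\phi^\varepsilon}(s),\mathcal L_{X^\varepsilon(s)})\|_{L_2(U,H)}^2\,ds\le C_{T,M}$ through $(\mathbf{H4})$, estimate (\ref{es42}) and the uniform a priori bound $\sup_{\varepsilon\in(0,1],\,\phi^\varepsilon\in\mathcal A_M}\mathbb E[\sup_{t\le T}\|X^{\varepsilon,\phi^\varepsilon}(t)\|_H^2]<\infty$ (a standard energy estimate from $(\mathbf{H3})$, Burkholder-Davis-Gundy and $\int_0^T\|\phi^\varepsilon\|_U^2\,ds\le M$), one gets $\mathbb E[\sup_{t\le T}\Gamma^\varepsilon(t)\|Z^\varepsilon(t)\|_H^2]\le C_{T,M}\varepsilon$. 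Since $\Gamma^\varepsilon(t)\ge e^{-C_{T,M}}$ a.s., this yields $\mathbb E[\sup_{t\le T}\|Z^\varepsilon(t)\|_H^2]\le C_{T,M}\varepsilon$, and Markov's inequality gives $\mathbb P(\sup_{t\le T}\|X^{\varepsilon,\phi^\varepsilon}(t)-\bar X^{\phi^\varepsilon}(t)\|_H>\delta)\le\delta^{-2}C_{T,M}\varepsilon\to0$, which is the claim.

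The main obstacle is the second step: because $\rho$ and $\|\phi^\varepsilon\|_U^2$ are unbounded, no Gronwall estimate with a deterministic exponent is available, and the classical cure of stopping $\|X^{\varepsilon,\phi^\varepsilon}\|$ fails here, since the nonlocal measure argument $\mathcal L_{X^\varepsilon}$ cannot be localized. What rescues the argument is that the random exponent $\int_0^T\psi^\varepsilon(s)\,ds$ is nonetheless dominated by a deterministic constant, uniformly over the control class $\mathcal A_M$; this is exactly where the growth assumption (\ref{es22}) on $\rho$ and the skeleton a priori bounds (\ref{es33})--(\ref{es51}) enter, turning the pathwise exponential-weight device into a substitute for localization.
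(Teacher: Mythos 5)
Your proposal is correct, and it diverges from the paper's proof at exactly one point: how the unbounded Gronwall coefficient $\rho(\bar X^{\phi^\varepsilon}(s))$ is absorbed. Both arguments start from the same It\^{o} decomposition of $\|Y^\varepsilon(t)\|_H^2$ into the four terms $\mathcal J_1,\dots,\mathcal J_4$, both use Young's inequality on the cross term so that $(\mathbf{H2})$ applies with $u=X^{\varepsilon,\phi^\varepsilon}$, $v=\bar X^{\phi^\varepsilon}$, $\mu=\mathcal L_{X^\varepsilon}$, $\nu=\mathcal L_{X^0}$, and both invoke Lemma \ref{lem6} to control $\mathbb W_{2,H}(\mathcal L_{X^\varepsilon(s)},\mathcal L_{X^0(s)})^2\le C_T\varepsilon$. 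The paper then introduces the stopping time $\tau_R$ built from $\|\bar X^{\phi^\varepsilon}\|_H$, $\|X^{\varepsilon,\phi^\varepsilon}\|_H$ and the time integrals of $\|\cdot\|_V^\alpha$, runs Gronwall on $[0,T\wedge\tau_R]$ to get $\mathbb E[\sup_{t\le T\wedge\tau_R}\|Y^\varepsilon\|_H^2]\le C_{T,M}e^{C_{R,M}T}\varepsilon$, and controls $\mathbb P(\tau_R<T)\le C_{T,M}/R$ by Chebyshev, concluding by letting $\varepsilon\to0$ first and then $R\to\infty$ --- which yields only convergence in probability. You instead observe that the random exponent $\int_0^T\psi^\varepsilon(s)\,ds$, with $\psi^\varepsilon=C+\rho(\bar X^{\phi^\varepsilon})+C\|X^0\|_H^\theta+\|\phi^\varepsilon\|_U^2$, is a.s.\ bounded by a \emph{deterministic} constant uniformly over $\mathcal A_M$: this follows because the skeleton equation is solved pathwise, so the deterministic bounds (\ref{es33})--(\ref{es51}) apply $\omega$-by-$\omega$ to $\bar X^{\phi^\varepsilon(\cdot,\omega)}$ with $\phi^\varepsilon(\cdot,\omega)\in S_M$, and the growth condition (\ref{es22}) together with $p\ge\beta+2$ then gives $\int_0^T\rho(\bar X^{\phi^\varepsilon}(s))\,ds\le C_{T,M}$ a.s. Your exponential weight $\Gamma^\varepsilon$ is therefore bounded below by $e^{-C_{T,M}}$ and can be discarded at the end; the remaining steps (BDG on the weighted martingale, $(\mathbf{H4})$ with (\ref{es42})--(\ref{es43}) for the It\^{o} correction) are routine. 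What your route buys is a cleaner single-parameter limit and the strictly stronger quantitative conclusion $\mathbb E[\sup_{t\le T}\|Y^\varepsilon(t)\|_H^2]\le C_{T,M}\varepsilon$, of the same quality as Lemma \ref{lem6}; what the paper's route buys is robustness in situations where the skeleton bounds would only hold in expectation rather than pathwise. Both are valid here.
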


\begin{proof}
Recall that $Y^{\varepsilon}(t):=X^{\varepsilon,\phi^\varepsilon}(t)-\bar{X}^{\phi^\varepsilon}(t)$ satisfies
\begin{eqnarray*}
\left\{ \begin{aligned}
dY^{\varepsilon}(t)=&\big[A(t,X^{\varepsilon,\phi^\varepsilon}(t),\mathcal{L}_{X^{\varepsilon}(t)})-A(t,\bar{X}^{\phi^\varepsilon}(t),\mathcal{L}_{X^0(t)})\big]dt\\
&+\big[B(t,X^{\varepsilon,\phi^\varepsilon}(t),\mathcal{L}_{X^{\varepsilon}(t)})-B(t,\bar{X}^{\phi^\varepsilon}(t),\mathcal{L}_{X^0(t)})\big]\phi^\varepsilon(t)dt\\
&+\sqrt{\varepsilon}B(t,X^{\varepsilon,\phi^\varepsilon}(t),\mathcal{L}_{X^{\varepsilon}(t)})dW(t),\\
Y^{\varepsilon}(0)=0&.
\end{aligned}\right.
\end{eqnarray*}
Using It\^{o}'s formula, we deduce that
\begin{eqnarray}\label{es40}
\|Y^{\varepsilon}(t)\|_H^2=~~\!\!\!\!\!\!\!\!&&2\int_0^t{}_{V^*}\langle A(s,X^{\varepsilon,\phi^\varepsilon}(s),\mathcal{L}_{X^{\varepsilon}(s)})-A(s,\bar{X}^{\phi^\varepsilon}(s),\mathcal{L}_{X^0(s)}),Y^{\varepsilon}(s)\rangle_{V}ds
\nonumber\\
\!\!\!\!\!\!\!\!&&~~~+2\int_0^t\langle\big[B(s,X^{\varepsilon,\phi^\varepsilon}(s),\mathcal{L}_{X^{\varepsilon}(s)})-B(s,\bar{X}^{\phi^\varepsilon}(s),\mathcal{L}_{X^0(s)})\big]\phi^\varepsilon(s),Y^{\varepsilon}(s)\rangle_{H}ds
\nonumber\\
\!\!\!\!\!\!\!\!&&~~~+\varepsilon\int_0^t
\|B(s,X^{\varepsilon,\phi^\varepsilon}(s),\mathcal{L}_{X^{\varepsilon}(s)})\|_{L_2(U,H)}^2ds
\nonumber\\
\!\!\!\!\!\!\!\!&&~~~
+2\sqrt{\varepsilon}\int_0^t\langle B(s,X^{\varepsilon,\phi^\varepsilon}(s),\mathcal{L}_{X^{\varepsilon}(s)})dW(s),Y^{\varepsilon}(s)\rangle_H
\nonumber\\
=:~~\!\!\!\!\!\!\!\!&&\sum_{i=1}^{4}\mathcal{J}_i(t).
\end{eqnarray}
Now we estimate the terms $\mathcal{J}_i(t)$, $i=1,2,3,4$, one by one.

First, for $\mathcal{J}_1(t), \mathcal{J}_2(t)$, by $(\mathbf{H2})$ we have
\begin{eqnarray*}
\!\!\!\!\!\!\!\!&&\mathcal{J}_1(t)+\mathcal{J}_2(t)
\nonumber\\
\leq~~\!\!\!\!\!\!\!\!&&2\int_0^t{}_{V^*}\langle A(s,X^{\varepsilon,\phi^\varepsilon}(s),\mathcal{L}_{X^{\varepsilon}(s)})-A(s,\bar{X}^{\phi^\varepsilon}(s),\mathcal{L}_{X^0(s)}),Y^{\varepsilon}(s)\rangle_{V}ds
\nonumber\\
\!\!\!\!\!\!\!\!&&~~~
+\int_0^t\|B(s,X^{\varepsilon,\phi^\varepsilon}(s),\mathcal{L}_{X^{\varepsilon}(s)})-B(s,\bar{X}^{\phi^\varepsilon}(s),\mathcal{L}_{X^0(s)})\|_{L_2(U,H)}^2ds
\nonumber\\
\!\!\!\!\!\!\!\!&&~~~
+\int_0^t\|\phi^\varepsilon(s)\|_U^2\|Y^{\varepsilon}(s)\|_H^2ds
\nonumber\\
\leq~~\!\!\!\!\!\!\!\!&&\int_0^t\big(C+\rho(\bar{X}^{\phi^\varepsilon}(s))+C\|X^0(s)\|_H^{\theta}\big)\|Y^{\varepsilon}(s)\|_H^2ds
\nonumber\\
\!\!\!\!\!\!\!\!&&~~~+C\int_0^t(1+\|X^0(s)\|_H^{\theta})\mathbb{E}\|X^{\varepsilon}(s)-X^0(s)\|_H^2ds+\int_0^t\|\phi^\varepsilon(s)\|_U^2\|Y^{\varepsilon}(s)\|_H^2ds
\nonumber\\
\leq~~\!\!\!\!\!\!\!\!&&C\int_0^t\big(1+\rho(\bar{X}^{\phi^\varepsilon}(s))+\|X^0(s)\|_H^{\theta}\big)\|Y^{\varepsilon}(s)\|_H^2ds
\nonumber\\
\!\!\!\!\!\!\!\!&&~~~+
\int_0^t\|\phi^\varepsilon(s)\|_U^2\|Y^{\varepsilon}(s)\|_H^2ds+C_T\varepsilon,
\end{eqnarray*}
where we used Lemma \ref{lem6} in the last step.

Then applying Gronwall's lemma to (\ref{es40}) we obtain that
\begin{eqnarray}\label{es49}
\|Y^{\varepsilon}(t)\|_H^2\leq~~\!\!\!\!\!\!\!\!&&\Big\{C_T\varepsilon+C\int_0^t\big(1+\rho(\bar{X}^{\phi^\varepsilon}(s))+\|X^0(s)\|_H^{\theta}\big)\|Y^{\varepsilon}(s)\|_H^2ds      +\mathcal{J}_3(t)+\mathcal{J}_4(t)\Big\}
\nonumber\\
\!\!\!\!\!\!\!\!&&~~~~~\cdot
\exp\Big\{\int_0^t\|\phi^\varepsilon(s)\|_U^2ds\Big\}.
\end{eqnarray}
Using a similar argument as the proof of (\ref{es33}), we know that there exists a constant $C_{T,M,x}>0$ such that
\begin{equation*}
\sup_{\phi\in \mathcal{A}_M}\Big\{\mathbb{E}\Big[\sup_{t\in[0,T]}\|\bar{X}^{\phi}(t)\|_{H}^2\Big]+\mathbb{E}\int_0^T\|\bar{X}^{\phi}(t)\|_{V}^{\alpha}dt\Big\}\leq C_{T,M,x},
\end{equation*}
\begin{equation}\label{es43}
\sup_{\phi\in \mathcal{A}_M}\Big\{\mathbb{E}\Big[\sup_{t\in[0,T]}\|X^{\varepsilon,\phi}(t)\|_{H}^2\Big]+\mathbb{E}\int_0^T\|X^{\varepsilon,\phi}(t)\|_{V}^{\alpha}dt\Big\}\leq C_{T,M,x}.
\end{equation}
Meanwhile, for $\mathcal{J}_3(t)$,  by (\ref{es42}) we have
\begin{eqnarray}\label{es44}
\mathcal{J}_3(t)\leq~~\!\!\!\!\!\!\!\!&& C\varepsilon\int_0^t(1+\|X^{\varepsilon,\phi^\varepsilon}(s)\|_H^2+\mathbb{E}\|X^{\varepsilon}(s)\|_H^2)ds
\nonumber\\
\leq~~\!\!\!\!\!\!\!\!&&C_{T}\varepsilon+C\varepsilon\int_0^t\|X^{\varepsilon,\phi^\varepsilon}(s)\|_H^2ds.
\end{eqnarray}
Now we define the following stopping time
\begin{eqnarray*}
\tau_R:=~~\!\!\!\!\!\!\!\!&&\inf\Big\{t\in[0,T]:\|\bar{X}^{\phi^\varepsilon}(t)\|_H+\|X^{\varepsilon,\phi^\varepsilon}(t)\|_H
\nonumber\\
\!\!\!\!\!\!\!\!&&~~~~~~~~~~+\int_0^t\big(\|\bar{X}^{\phi^\varepsilon}(s)\|_V^{\alpha}+\|X^{\varepsilon,\phi^\varepsilon}(s)\|_V^{\alpha}\big)ds\geq R\Big\}\wedge T,~R>0.
\end{eqnarray*}
As for $\mathcal{J}_4(t)$, by Burkholder-Davis-Gundy's inequality we have
\begin{eqnarray}\label{es46}
\!\!\!\!\!\!\!\!&&\mathbb{E}\Big[\sup_{t\in[0,T]}\mathcal{J}_4(t\wedge\tau_R)\Big]
\nonumber\\
\leq~~\!\!\!\!\!\!\!\!&&
 C\sqrt{\varepsilon}\mathbb{E}\Big[\int_0^{T\wedge\tau_R}\|B(t,X^{\varepsilon,\phi^\varepsilon}(t),\mathcal{L}_{X^{\varepsilon}(t)})\|_{L_2(U,H)}^2\|Y^\varepsilon(t)\|_H^2dt\Big]^{\frac{1}{2}}
\nonumber\\
\leq~~\!\!\!\!\!\!\!\!&&\frac{1}{2}\mathbb{E}\Big[\sup_{t\in[0,T\wedge\tau_R]}\|Y^\varepsilon(t)\|_H^2\Big]+C_T\varepsilon\mathbb{E}\Big[\int_0^{T\wedge\tau_R}(1+\|X^{\varepsilon,\phi^\varepsilon}(t)\|_H^2+\mathbb{E}\|X^{\varepsilon}(t)\|_H^2)dt\Big]
\nonumber\\
\leq~~\!\!\!\!\!\!\!\!&&\frac{1}{2}\mathbb{E}\Big[\sup_{t\in[0,T\wedge\tau_R]}\|Y^\varepsilon(t)\|_H^2\Big]+C_T\varepsilon,
\end{eqnarray}
where we used (\ref{es42}) and (\ref{es43}) in the last step.

Combining (\ref{es36}), (\ref{es49}), (\ref{es44}) with the definition of $\tau_R$, we infer that
\begin{eqnarray}\label{es47}
\|Y^{\varepsilon}(t\wedge\tau_R)\|_H^2
\leq~~\!\!\!\!\!\!\!\!&&\Big\{C_T\varepsilon+C_R\int_0^{t\wedge\tau_R}\|Y^{\varepsilon}(s)\|_H^2ds+C\varepsilon\int_0^{t\wedge\tau_R}\|X^{\varepsilon,\phi^\varepsilon}(s)\|_H^2ds
\nonumber\\
\!\!\!\!\!\!\!\!&&~~~
+\mathcal{J}_4(t\wedge\tau_R)\Big\}
\cdot
\exp\Big\{\int_0^{t\wedge\tau_R}\|\phi^\varepsilon(s)\|_U^2ds\Big\}.
\end{eqnarray}
Taking supremum and expectation on both sides of (\ref{es47}), by (\ref{es43}), (\ref{es46}) and the definition of $\mathcal{A}_M$ we have
\begin{equation*}
\mathbb{E}\Big[\sup_{t\in[0,T]}\|Y^{\varepsilon}(t\wedge\tau_R)\|_H^2\Big]
\leq C_{T,M}\varepsilon+C_{R,M}\mathbb{E}\int_0^{T}\|Y^{\varepsilon}(t\wedge\tau_R)\|_H^2dt.
\end{equation*}
Then Gronwall's lemma implies that
\begin{equation*}
\mathbb{E}\Big[\sup_{t\in[0,T\wedge\tau_R]}\|Y^{\varepsilon}(t)\|_H^2\Big]\leq C_{T,M}e^{C_{R,M}T}\varepsilon.
\end{equation*}
Consequently, we deduce that for any $\delta>0$,
\begin{eqnarray}\label{es50}
\mathbb{P}\Big(\sup_{t\in[0,T]}\|Y^{\varepsilon}(t)\|_H>\delta\Big)
=~~ \!\!\!\!\!\!\!\!&&\mathbb{P}\Big(\big\{\sup_{t\in[0,T]}\|Y^{\varepsilon}(t)\|_H>\delta\big\}\cap\{\tau_R\geq T\}\Big)
\nonumber\\
\!\!\!\!\!\!\!\!&&
+\mathbb{P}\Big(\big\{\sup_{t\in[0,T]}\|Y^{\varepsilon}(t)\|_H>\delta\big\}\cap\{\tau_R< T\}\Big)
\nonumber\\
\leq~~\!\!\!\!\!\!\!\!&&\frac{\mathbb{E}\Big[\sup_{t\in[0,T\wedge\tau_R]}\|Y^{\varepsilon}(t)\|_H^2\Big]}{\delta^2}
\nonumber\\
\!\!\!\!\!\!\!\!&&~~
+\frac{\mathbb{E}\Big[\sup_{t\in[0,T]}\big(\|\bar{X}^{\phi^\varepsilon}(t)\|_H+\|X^{\varepsilon,\phi^\varepsilon}(t)\|_H\big)\Big]}{R}
\nonumber\\
\!\!\!\!\!\!\!\!&&
+\frac{\mathbb{E}\Big[\int_0^T\big(\|\bar{X}^{\phi^\varepsilon}(t)\|_V^{\alpha}+\|X^{\varepsilon,\phi^\varepsilon}(t)\|_V^{\alpha}\big)dt\Big]}{R}
\nonumber\\
\leq~~\!\!\!\!\!\!\!\!&&
\frac{C_{T,M}e^{C_{R,M}T}}{\delta^2}\varepsilon+\frac{C_{T,M}}{R}.
\end{eqnarray}
Taking $\limsup_{\varepsilon\to0}$ and then $\lim_{R\to \infty}$ on both sides of (\ref{es50}), we conclude the desired result.
\end{proof}

\begin{proposition}\label{pro8}
Suppose that all assumptions in Theorem \ref{th3} hold. Let $\{\phi^n: n\in\mathbb{N}\}\subset S_M$ for any $M<\infty$ such that $\phi^n$ converges to element $\phi$ in $S_M$ as $n\to\infty$, then
$$\lim_{n\to\infty}\sup_{t\in[0,T]}\|\bar{X}^{\phi^n}(t)-\bar{X}^{\phi}(t)\|_H=0.$$
\end{proposition}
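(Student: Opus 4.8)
The plan is to estimate $Z^n:=\bar X^{\phi^n}-\bar X^{\phi}$ in $C([0,T];H)$ directly from the local monotonicity $(\mathbf{H2})$, without extracting subsequences or identifying weak limits. The crucial simplification is that $\mathcal{L}_{X^0(t)}=\delta_{X^0(t)}=:\mu^0(t)$ does not depend on the control, so $\bar X^{\phi^n}$ and $\bar X^{\phi}$ both solve $(\ref{eqsk})$ with the \emph{same} measure flow $\mu^0$; applying $(\mathbf{H2})$ to the pair $\big(\bar X^{\phi^n}(s),\bar X^{\phi}(s)\big)$ with both measures equal to $\mu^0(s)$ makes the $\mathbb{W}_{2,H}$-term vanish. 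First I would record the uniform bounds: $(\ref{es33})$--$(\ref{es51})$ give $\sup_n\big(\sup_{t\in[0,T]}\|\bar X^{\phi^n}(t)\|_H^2+\int_0^T\|\bar X^{\phi^n}(t)\|_V^{\alpha}dt\big)\le C_{T,M}$; Lemma~$\ref{lem7}$, applied to both $\bar X^{\phi^n}$ and $\bar X^{\phi}$, gives $\int_0^T\|Z^n(t)-Z^n(t(\Delta))\|_H^2\,dt\le C_{T,M}\Delta$ with $C_{T,M}$ independent of $n$; and $(\mathbf{H4})$ together with $X^0,\bar X^{\phi}\in C([0,T];H)$ gives $M_B:=\sup_{s\in[0,T]}\|B(s,\bar X^{\phi}(s),\mu^0(s))\|_{L_2(U,H)}<\infty$.

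Second, I would apply the integration by parts formula in the Gelfand triple to $\|Z^n(t)\|_H^2$ (note $Z^n(0)=0$), decompose $B(s,\bar X^{\phi^n},\mu^0)\phi^n-B(s,\bar X^{\phi},\mu^0)\phi=[B(s,\bar X^{\phi^n},\mu^0)-B(s,\bar X^{\phi},\mu^0)]\phi^n+B(s,\bar X^{\phi},\mu^0)[\phi^n-\phi]$, invoke $(\mathbf{H2})$ as above, and use Young's inequality to absorb the term $2\langle[B(s,\bar X^{\phi^n},\mu^0)-B(s,\bar X^{\phi},\mu^0)]\phi^n,Z^n\rangle_H$ into $\|B(s,\bar X^{\phi^n},\mu^0)-B(s,\bar X^{\phi},\mu^0)\|_{L_2(U,H)}^2+\|\phi^n\|_U^2\|Z^n\|_H^2$. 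This should produce
\begin{equation*}
\|Z^n(t)\|_H^2\le\int_0^t h^n(s)\|Z^n(s)\|_H^2\,ds+2\int_0^t\big\langle B(s,\bar X^{\phi}(s),\mu^0(s))[\phi^n(s)-\phi(s)],Z^n(s)\big\rangle_H\,ds,
\end{equation*}
where $h^n(s)=C+\rho(\bar X^{\phi}(s))+C\|X^0(s)\|_H^{\theta}+\|\phi^n(s)\|_U^2$ satisfies $\sup_n\int_0^Th^n(s)\,ds\le C_{T,M}$ by $(\ref{es22})$, $(\ref{es33})$ and $\int_0^T\|\phi^n\|_U^2\,ds\le M$. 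Gronwall's lemma then yields $\sup_{t\in[0,T]}\|Z^n(t)\|_H^2\le C_{T,M}\,V_n$ with $V_n:=\sup_{t\in[0,T]}\big|\int_0^t\langle B(s,\bar X^{\phi}(s),\mu^0(s))[\phi^n(s)-\phi(s)],Z^n(s)\rangle_H\,ds\big|$, so the statement reduces to $V_n\to0$.

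Third, to prove $V_n\to0$ I would split $Z^n(s)=\big(Z^n(s)-Z^n(s(\Delta))\big)+Z^n(s(\Delta))$. The first part is bounded, uniformly in $n$ and $t$, by $C_{T,M}\sqrt{\Delta}$ via Cauchy--Schwarz, $\|B(s,\bar X^{\phi}(s),\mu^0(s))\|_{L_2(U,H)}\le M_B$, $\int_0^T\|\phi^n-\phi\|_U^2\,ds\le 4M$ and Lemma~$\ref{lem7}$. For the second part I would first establish the sub-claim that the operator $K\colon L^2([0,T];U)\to C([0,T];H)$, $(K\psi)(t):=\int_0^tB(s,\bar X^{\phi}(s),\mu^0(s))\psi(s)\,ds$, is compact; this follows by approximating the Hilbert--Schmidt kernel $B(\cdot,\bar X^{\phi}(\cdot),\mu^0(\cdot))\in L^2([0,T];L_2(U,H))$ in $L^2_tL_2(U,H)$ by finite-rank-valued kernels (for which weak convergence of $\psi_n$ passes coordinatewise to strong convergence of the images) together with equicontinuity in $t$. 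Consequently $\Psi^n:=K(\phi^n-\phi)\to0$ in $C([0,T];H)$. With $\Delta$ fixed, on each grid interval $[s_k,s_{k+1})$ the vector $Z^n(s(\Delta))=Z^n(s_k)$ is constant with $\sup_{n,k}\|Z^n(s_k)\|_H\le 2C_{T,M}^{1/2}$, while the accompanying factor equals $\Psi^n(s_{k+1}\wedge t)-\Psi^n(s_k)$; since there are finitely many intervals, the second part is $\le C(\Delta)\|\Psi^n\|_{C([0,T];H)}\to0$ as $n\to\infty$. Hence $\limsup_n V_n\le C_{T,M}\sqrt{\Delta}$ for every $\Delta>0$, i.e. $V_n\to0$, which gives $\sup_{t\in[0,T]}\|\bar X^{\phi^n}(t)-\bar X^{\phi}(t)\|_H\to0$.

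The step I expect to be the main obstacle is the cross term $\int_0^t\langle B(s,\bar X^{\phi}(s),\mu^0(s))[\phi^n(s)-\phi(s)],Z^n(s)\rangle_H\,ds$: since $\phi^n-\phi$ converges only weakly and $Z^n$ is only weak-$*$ pre-compact (no compact embedding $V\subset H$ is assumed), the pairing cannot be passed to the limit naively. It is the combination of the time-discretization from Lemma~$\ref{lem7}$ (which replaces $Z^n(s)$ by the piecewise-constant $Z^n(s(\Delta))$ at the cost of an $O(\sqrt{\Delta})$ error) with the compactness of $K$ coming from the Hilbert--Schmidt structure of $B$ (which upgrades $\Psi^n\rightharpoonup0$ to $\Psi^n\to0$ in $C([0,T];H)$) that makes this term disappear; the remaining estimates are routine.
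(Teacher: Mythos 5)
Your proposal is correct, and its skeleton coincides with the paper's: both reduce the problem, via the energy identity, $(\mathbf{H2})$ applied with the \emph{common} measure flow $\mathcal{L}_{X^0(t)}=\delta_{X^0(t)}$ (so the Wasserstein term drops out), Young's inequality and Gronwall's lemma, to showing that the cross term $\sup_t\big|\int_0^t\langle B(s,\bar X^{\phi}(s),\mathcal{L}_{X^0(s)})(\phi^n-\phi)(s),Z^n(s)\rangle_H ds\big|$ vanishes, and both then replace $Z^n(s)$ by the piecewise-constant $Z^n(s(\Delta))$ at an $O(\sqrt{\Delta})$ cost using Lemma \ref{lem7}. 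Where you genuinely diverge is the treatment of the remaining term: the paper additionally freezes $B$ at the grid times and grid states (its terms $\widetilde{\mathcal{H}}_2$--$\widetilde{\mathcal{H}}_4$), which is exactly where the extra hypothesis $(\mathbf{H5})$ and a Lipschitz-type control of $B$ in $(u,\mu)$ enter, and only then exploits compactness of each single Hilbert--Schmidt operator $B(k\Delta,\bar X^{\phi}(k\Delta),\mathcal{L}_{X^0(k\Delta)})$ against the weak convergence $\int_{k\Delta}^{(k+1)\Delta}(\phi^n-\phi)ds\rightharpoonup 0$. You instead establish complete continuity of the full integral operator $K\psi=\int_0^{\cdot}B(s,\bar X^{\phi}(s),\mathcal{L}_{X^0(s)})\psi(s)ds$ from $L^2([0,T];U)$ into $C([0,T];H)$ (for fixed $t$ the map is Hilbert--Schmidt into $H$, with HS norm $\big(\int_0^t\|B\|_{L_2(U,H)}^2ds\big)^{1/2}$, and equicontinuity in $t$ follows from $(\mathbf{H4})$ and the uniform bound on $\|\psi^n\|_{L^2}$), so that $\Psi^n=K(\phi^n-\phi)\to0$ uniformly and the piecewise-constant pairing with $Z^n(s_k)$ is killed by a finite sum of increments of $\Psi^n$. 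This buys a shorter argument that, for this proposition, does not use $(\mathbf{H5})$ or any separate Lipschitz estimate on $B$ in its state and measure arguments, at the price of the (correct, and correctly justified) functional-analytic sub-claim about $K$; the paper's route stays with elementary pointwise estimates and only needs compactness of a single fixed operator at each grid point. One small point you handled but should keep explicit in a write-up: the Gronwall constant requires $\int_0^T\rho(\bar X^{\phi}(s))ds<\infty$, which indeed follows from (\ref{es22}) combined with (\ref{es33})/(\ref{es51}).
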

\begin{proof}
Let  $\bar{X}^{\phi^n}$ be the solution of (\ref{eqsk}) with $\phi^n\in S_M$ instead of $\phi$, i.e.,
\begin{equation*}
\frac{d\bar{X}^{\phi^n}(t)}{dt}=A(t,\bar{X}^{\phi^n}(t),\mathcal{L}_{X^0(t)})dt+B(t,\bar{X}^{\phi^n}(t),\mathcal{L}_{X^0(t)})\phi^n(t),~\bar{X}^{\phi^n}(0)=x\in H.
\end{equation*}
Then we define $Z^{n}(t)=\bar{X}^{\phi^n}(t)-\bar{X}^{\phi}(t)$ and it solves
\begin{equation*}
\left\{ \begin{aligned}
\frac{dZ^{n}(t)}{dt}=&A(t,\bar{X}^{\phi^n}(t),\mathcal{L}_{X^0(t)})-A(t,\bar{X}^{\phi}(t),\mathcal{L}_{X^0(t)})
\\
&+B(t,\bar{X}^{\phi^n}(t),\mathcal{L}_{X^0(t)})\phi^n(t)-B(t,\bar{X}^{\phi}(t),\mathcal{L}_{X^0(t)})\phi(t),\\
Z^{n}(0)=&~0.
\end{aligned} \right.
\end{equation*}
Applying the integration by parts formula we have
\begin{eqnarray*}
\|Z^{n}(t)\|_H^2
=~~\!\!\!\!\!\!\!\!&&2\int_0^t{}_{V^*}\langle A(s,\bar{X}^{\phi^n}(s),\mathcal{L}_{X^0(s)})-A(s,\bar{X}^{\phi}(s),\mathcal{L}_{X^0(s)}),Z^{n}(s)\rangle_Vds
\nonumber\\
\!\!\!\!\!\!\!\!&&~~~+2\int_0^t\langle \big(B(s,\bar{X}^{\phi^n}(s),\mathcal{L}_{X^0(s)})-B(s,\bar{X}^{\phi}(s),\mathcal{L}_{X^0(s)})\big)\phi^n(s),Z^{n}(s)\rangle_Hds
\nonumber\\
\!\!\!\!\!\!\!\!&&~~~+2\int_0^t\langle B(s,\bar{X}^{\phi}(s),\mathcal{L}_{X^0(s)})(\phi^n(s)-\phi(s)),Z^{n}(s)\rangle_Hds
\nonumber\\
=:~~\!\!\!\!\!\!\!\!&&\sum_{i=1}^3\mathcal{H}_i(t).
\end{eqnarray*}
Now the terms  $\mathcal{H}_i$, $i=1,2,3$, will be estimated one by one. Firstly, by $(\mathbf{H2})$ we have
\begin{eqnarray}\label{21}
\!\!\!\!\!\!\!\!&&\mathcal{H}_1(t)+\mathcal{H}_2(t)
\nonumber\\
\leq~~\!\!\!\!\!\!\!\!&&\int_0^t2{}_{V^*}\langle A(s,\bar{X}^{\phi^n}(s),\mathcal{L}_{X^0(s)})-A(s,\bar{X}^{\phi}(s),\mathcal{L}_{X^0(s)}),Z^{n}(s)\rangle_V
\nonumber\\
\!\!\!\!\!\!\!\!&&~~~+\|B(s,\bar{X}^{\phi^n}(s),\mathcal{L}_{X^0(s)})-B(s,\bar{X}^{\phi}(s),\mathcal{L}_{X^0(s)})\|_{L_2(U,H)}^2ds
\nonumber\\
\!\!\!\!\!\!\!\!&&~~~
+\int_0^t\|\phi^n(s)\|_U^2\|Z^{n}(s)\|_H^2ds
\nonumber\\
\leq~~\!\!\!\!\!\!\!\!&&\int_0^t(C+\rho(\bar{X}^{\phi}(s))+C\|X^0(s)\|^{\theta})\|Z^{n}(s)\|_H^2ds+C\int_0^t\|\phi^n(s)\|_U^2\|Z^{n}(s)\|_H^2ds.
\end{eqnarray}
Note  that $\phi^n\in S_M$, by (\ref{es22}), (\ref{es51}) and Gronwall's lemma we have
\begin{equation}\label{27}
\sup_{t\in[0,T]}\|Z^{n}(t)\|_H^2\leq C_{T,M}\Big[\sup_{t\in[0,T]}|\mathcal{H}_3(t)|\Big].
\end{equation}
Note also that
\begin{equation}
\sup_{t\in[0,T]}|\mathcal{H}_3(t)|\leq\sum_{i=1}^5\widetilde{\mathcal{H}}_i(n),
\end{equation}
where
\begin{eqnarray*}
\widetilde{\mathcal{H}}_1(n):=~~\!\!\!\!\!\!\!\!&&\sup_{t\in[0,T]}\Big|\int_0^{t}\langle B(s,\bar{X}^{\phi}(s),\mathcal{L}_{X^0(s)})(\phi^n(s)-\phi(s)),Z^n(s)-Z^n(s(\Delta))\rangle_{H}ds\Big|,
\nonumber\\
\widetilde{\mathcal{H}}_2(n):=~~\!\!\!\!\!\!\!\!&&\sup_{t\in[0,T]}\Big|\int_0^{t}\langle \big(B(s,\bar{X}^{\phi}(s),\mathcal{L}_{X^0(s)})-B(s(\Delta),\bar{X}^{\phi}(s),\mathcal{L}_{X^0(s)})\big)
\nonumber\\
\!\!\!\!\!\!\!\!&&~~~~~~~~~~~~~~
\cdot(\phi^n(s)-\phi(s)),Z^n(s(\Delta))\rangle_{H}ds\Big|,
\nonumber\\
\widetilde{\mathcal{H}}_3(n):=~~\!\!\!\!\!\!\!\!&&\sup_{t\in[0,T]}\Big|\int_0^{t}\langle \big(B(s(\Delta),\bar{X}^{\phi}(s),\mathcal{L}_{X^0(s)})-B(s(\Delta),\bar{X}^{\phi}(s(\Delta)),\mathcal{L}_{X^0(s(\Delta)))})\big)
\nonumber\\
\!\!\!\!\!\!\!\!&&~~~~~~~~~~~~~~\cdot(\phi^n(s)-\phi(s)),Z^n(s(\Delta))\rangle_{H}ds\Big|,
\nonumber\\
\widetilde{\mathcal{H}}_4(n):=~~\!\!\!\!\!\!\!\!&&\sup_{t\in[0,T]}\Big|\int_{t(\Delta)}^{t}\langle B(s(\Delta),\bar{X}^{\phi}(s(\Delta)),\mathcal{L}_{X^0(s(\Delta))})(\phi^n(s)-\phi(s)),Z^n(s(\Delta))\rangle_{H}ds\Big|,
\nonumber\\
\widetilde{\mathcal{H}}_5(n):=~~\!\!\!\!\!\!\!\!&&\sum_{k=0}^{[T/\Delta]-1}\Big|\langle B(k\Delta,\bar{X}^{\phi}(k\Delta),\mathcal{L}_{X^0(k\Delta)})\int_{k\Delta}^{(k+1)\Delta}(\phi^n(s)-\phi(s))ds,Z^n(k\Delta)\rangle_{H}\Big|.
\end{eqnarray*}
For $\widetilde{\mathcal{H}}_1(n)$, in view of Lemma \ref{lem7}, it is easy to show that
\begin{eqnarray*}
\widetilde{\mathcal{H}}_1(n)
\leq~~\!\!\!\!\!\!\!\!&&\int_0^T\|B(t,\bar{X}^{\phi}(t),\mathcal{L}_{X^0(t)})\|_{L_2(U,H)}\|\phi^n(t)-\phi(t)\|_U\|Z^n(t)-Z^n(t(\Delta))\|_Hdt
\nonumber\\
\leq~~\!\!\!\!\!\!\!\!&&C\Big\{\int_0^T\big(1+\|\bar{X}^{\phi}(t)\|_H^2+\|X^0(t)\|_H^2\big)\|\phi^n(t)-\phi(t)\|_U^2dt\Big\}^{\frac{1}{2}}
\nonumber\\
\!\!\!\!\!\!\!\!&&~~~\cdot\Big\{\int_0^T(\|\bar{X}^{\phi^n}(t)-\bar{X}^{\phi^n}(t(\Delta))\|_H^2+\|\bar{X}^{\phi}(t)-\bar{X}^{\phi}(t(\Delta)) \|_H^2)dt\Big\}^{\frac{1}{2}}
\nonumber\\
\leq~~\!\!\!\!\!\!\!\!&&C_{T,M}\Delta^{\frac{1}{2}}\Big\{\big(1+\sup_{\phi\in S_M}\{\sup_{t\in[0,T]}\|\bar{X}^{\phi}(t)\|_H^2\}+\sup_{t\in[0,T]}\|X^0(t)\|_H^2\big)
\nonumber\\
\!\!\!\!\!\!\!\!&&~~~\cdot
\int_0^T\|\phi^n(t)-\phi(t)\|_U^2dt\Big\}^{\frac{1}{2}}
\nonumber\\
\leq~~\!\!\!\!\!\!\!\!&&C_{T,M}\Delta^{\frac{1}{2}},
\end{eqnarray*}
where we used (\ref{es33}) and the fact that $\phi^n,\phi\in S_M$ in the last step.

For $\widetilde{\mathcal{H}}_2(n)$,  by $(\mathbf{H5})$ we have
\begin{eqnarray*}
\widetilde{\mathcal{H}}_2(n)
\leq~~\!\!\!\!\!\!\!\!&&\int_0^T\|B(t,\bar{X}^{\phi}(t),\mathcal{L}_{X^0(t)})-B(t(\Delta),\bar{X}^{\phi}(t),\mathcal{L}_{X^0(t)})\|_{L_2(U,H)}
\nonumber\\
\!\!\!\!\!\!\!\!&&~~~\cdot
\|\phi^n(t)-\phi(t)\|_U\|Z^n(t(\Delta))\|_Hdt
\nonumber\\
\leq~~\!\!\!\!\!\!\!\!&&C\sqrt{\sup_{\phi\in S_M}\{\sup_{t\in[0,T]}\|\bar{X}^{\phi}(t)\|_H^2\}}
\nonumber\\
\!\!\!\!\!\!\!\!&&~~~\cdot
\int_0^T\Delta^\gamma\Big(1+\|\bar{X}^{\phi}(t)\|_H+\|X^0(t)\|_H\Big)\|\phi^n(t)-\phi(t)\|_Udt
\nonumber\\
\leq~~\!\!\!\!\!\!\!\!&&C_{T,M}\Delta^{\gamma}.
\end{eqnarray*}
As for  $\widetilde{\mathcal{H}}_3(n)$, by H\"{o}lder's inequality we have
\begin{eqnarray*}
\widetilde{\mathcal{H}}_3(n)
\leq~~\!\!\!\!\!\!\!\!&&C\int_0^T\Big(\|\bar{X}^{\phi}(t)-\bar{X}^{\phi}(t(\Delta))\|_H+\mathbb{W}_{2,H}(\mathcal{L}_{X^0(t)},\mathcal{L}_{X^0(t(\Delta))})\Big)
\nonumber\\
\!\!\!\!\!\!\!\!&&~~~\cdot
\|\phi^n(t)-\phi(t)\|_U\|Z^n(t(\Delta))\|_Hdt
\nonumber\\
\leq~~\!\!\!\!\!\!\!\!&&C\Big\{\int_0^T\|Z^n(t(\Delta))\|_H^2\|\bar{X}^{\phi^n}(t)-\bar{X}^{\phi^n}(t(\Delta))\|_H^2dt\Big\}^{\frac{1}{2}}\Big\{\int_0^T\|\phi^n(t)-\phi(t)\|_U^2dt\Big\}^{\frac{1}{2}}
\nonumber\\
\!\!\!\!\!\!\!\!&&~~~+C\Big\{\int_0^T\|Z^n(t(\Delta))\|_H^2\mathbb{W}_{2,H}(\mathcal{L}_{X^0(t)},\mathcal{L}_{X^0(t(\Delta))})^2dt\Big\}^{\frac{1}{2}}\Big\{\int_0^T\|\phi^n(t)-\phi(t)\|_U^2dt\Big\}^{\frac{1}{2}}
\nonumber\\
\leq~~\!\!\!\!\!\!\!\!&&C_M\sqrt{\sup_{\phi\in S_M}\{\sup_{t\in[0,T]}\|\bar{X}^{\phi}(t)\|_H^2\}}\Big\{\int_0^T\|\bar{X}^{\phi^n}(t)-\bar{X}^{\phi^n}(t(\Delta))\|_H^2dt\Big\}^{\frac{1}{2}}
\nonumber\\
\!\!\!\!\!\!\!\!&&~~~+C_M\sqrt{\sup_{\phi\in S_M}\{\sup_{t\in[0,T]}\|\bar{X}^{\phi}(t)\|_H^2\}}\Big\{\int_0^T\|X^0(t)-X^0(t(\Delta))\|_H^2dt\Big\}^{\frac{1}{2}}.
\end{eqnarray*}
Since $X^0\in C([0,T];H)$, without loss of generality, we suppose that $\|X^0_t-X^0_{t(\Delta)}\|_H\leq\Delta$ for any $t\in[0,T]$. Then by Lemma \ref{lem7} we obtain that
$\widetilde{\mathcal{H}}_3(n)\leq C_{T,M}\Delta^{\frac{1}{2}}$.

By H\"{o}lder's inequality  we also deduce that
\begin{eqnarray*}
\widetilde{\mathcal{H}}_4(n)
\leq~~\!\!\!\!\!\!\!\!&&\sqrt{\sup_{\phi\in S_M}\{\sup_{t\in[0,T]}\|\bar{X}^{\phi}(t)\|_H^2\}}
\nonumber\\
\!\!\!\!\!\!\!\!&&~~~\cdot\Big\{\sup_{t\in[0,T]}\Big|\int_{t(\Delta)}^{t} \|B(s(\Delta),\bar{X}^{\phi}(s(\Delta)),\mathcal{L}_{X^0(t(\Delta))})\|_{L_2(U,H)}\|\phi^n(s)-\phi(s)\|_Uds\Big|\Big\}
\nonumber\\
\leq~~\!\!\!\!\!\!\!\!&&\Delta^{\frac{1}{2}}\sqrt{\sup_{\phi\in S_M}\{\sup_{t\in[0,T]}\|\bar{X}^{\phi}(t)\|_H^2\}}
\nonumber\\
\!\!\!\!\!\!\!\!&&~~~\cdot
\Big\{\sup_{t\in[0,T]}\Big|\int_{t(\Delta)}^{t} \|B(s(\Delta),\bar{X}^{\phi}(s(\Delta)),\mathcal{L}_{X^0(t(\Delta))})\|_{L_2(U,H)}^2\|\phi^n(s)-\phi(s)\|_U^2ds\Big|\Big\}^{\frac{1}{2}}
\nonumber\\
\leq~~\!\!\!\!\!\!\!\!&&C_{T,M}\Delta^{\frac{1}{2}}.
\end{eqnarray*}
Note that $\phi^n\to\phi$ in the weak topology of $S_M$, therefore for any $a,b\in[0,T]$,~$a<b$, the integral
$$\int_a^b\phi(s)^n ds\to\int_a^b\phi(s)ds~\text{weakly in}~U,~\text{as}~n\to\infty.$$
As for $\widetilde{\mathcal{H}}_5(n)$, since $B(k\Delta,\bar{X}^{\phi}_{k\Delta},\mu^0_{k\Delta})$ is a compact operator, then for any fixed $k$, the sequence $$B(k\Delta,\bar{X}^{\phi}(k\Delta),\mathcal{L}_{X^0(k\Delta)})\int_{k\Delta}^{(k+1)\Delta}(\phi^n(s)-\phi(s))ds\to 0~\text{strongly in}~H,~\text{as}~n\to\infty.$$
Hence, due to the boundedness of $Z^n(k\Delta)$, we infer that
\begin{equation}\label{28}
\lim_{n\to\infty}\widetilde{\mathcal{H}}_5(n)=0.
\end{equation}
Finally, combining (\ref{27})-(\ref{28}) and taking $\Delta\to0$, one can conclude that
$$\lim_{n\to\infty}\sup_{t\in[0,T]}\|Z^{n}(t)\|_H=0.$$
Now the proof is complete.
\end{proof}

\vspace{1mm}
Now it is easy  to complete the proof of  Theorem \ref{th3}.

\vspace{1mm}
\textbf{Proof of Theorem \ref{th3}:}  Combining Propositions \ref{pro7} and \ref{pro8},  it is clear that Lemma \ref{lem8} implies  $\{X^{\varepsilon}\}_{\varepsilon>0}$ satisfies the large deviation principle in $C([0,T]; H)$ with a good rate function $I$ defined in (\ref{rf}).

\section{Applications in finite-dimensional case}\label{sec6}
\subsection{McKean-Vlasov SDEs for granular media equation}
The main aim of this part is to use our main results to investigate the following nonlinear equation
\begin{equation}\label{6.3}
\partial_tf_t=\Delta f_t+{\rm{div}}\Big\{f_t\nabla V+f_t\nabla(W\ast f_t)\Big\},
\end{equation}
where  $(W\ast f_t)(x):=\int_{\mathbb{R}^d}W(x-z)f_t(z)dz$, the confinement potential $V:\mathbb{R}^d \rightarrow \mathbb{R}$ and the interaction potential
$W:\mathbb{R}^d \rightarrow \mathbb{R}$ satisfy the following conditions.
\begin{enumerate}

\item [$(\mathbf{A_v})$]\label{AV1} There exists $C,k>0$ such that $\nabla V(\cdot):\mathbb{R}^d\to\mathbb{R}^d$ is continuous and for $x,y\in\mathbb{R}^d$,
\begin{eqnarray*}
 \!\!\!\!\!\!\!\!&&\langle \nabla V(x),x\rangle \geq -C(1+|x|^2),
 \nonumber\\
 \vspace{2mm}
\!\!\!\!\!\!\!\!&& \langle\nabla V(x)-\nabla V(y),x-y\rangle\geq-C|x-y|^2,\label{es55}
 \\
 \vspace{2mm}
\!\!\!\!\!\!\!\!&& |\nabla V(x)|\leq C(1+|x|^k).\nonumber
\end{eqnarray*}

\item [$(\mathbf{A_w})$]\label{AW1} There exists $C,k>0$ such that for  $x,y\in\mathbb{R}^d$,
\begin{eqnarray}
\!\!\!\!\!\!\!\!&&
|\nabla W(x)-\nabla W(y)|\leq C(1+|x|^k+|y|^k)|x-y|, \label{es56}
\\
\!\!\!\!\!\!\!\!&&
|\nabla W(x)|\leq C(1+|x|).\nonumber
\end{eqnarray}
\end{enumerate}

Such nonlinear equation originally arises in the modelling of granular media. It has been studied e.g. in  \cite{CMV,M2} and references therein under various assumptions
on the potentials $V$ and $W$ via  analytical and probabilistic approach.

By applying  It\^{o}'s formula, we observe that the distribution of solution to the following equation
\begin{equation}\label{eqg}
dX(t)=-\nabla V(X(t))-\nabla (W\ast\mu(t))(X(t))dt+ \sqrt{2} dB(t)
\end{equation}
is in fact a weak solution (in PDE sense) to (\ref{6.3}), where $B(t)$ is a $d$-dimensional Wiener process and  $\mu(t)$ is the distribution of $X(t)$.
Therefore, we are able  to use the probabilistic approach to investigate (\ref{6.3}).

\begin{theorem}\label{th4}
Suppose that  $(\mathbf{A_v})$ and $(\mathbf{A_w})$ hold. Then for any $X(0)\in L^r(\Omega;\mathbb{R}^d)$ with $r>2k\vee 2$, (\ref{eqg}) has a unique strong/weak solution in the sense of Definition \ref{de1} provided satisfying
\begin{equation*}
\mathbb{E}\Big[\sup_{t\in[0,T]}|X(t)|^{r}\Big]<\infty.
\end{equation*}
\end{theorem}
\begin{remark}\label{re2}
As applications, the result in Theorem \ref{th4} can be applied to the potentials
\begin{equation}\label{pot1}
\nabla W(x)=-\frac{\beta Kx}{(1+|x|^2)^\gamma},\;\;\nabla V(x)=\beta(x^3-x),~\gamma\geq 0,
\end{equation}
where $\beta = \frac{1}{\kappa T}>0$ ($\kappa$ is the Boltzmann constant) is the inverse temperature. We call it ferromagnetic or anti-ferromagnetic respectively when $K>0$ or $K<0$. Note that (\ref{pot1}) is the extension of the classical Curie-Weiss mean-field  model ($\gamma=0$ in (\ref{pot1})) which satisfies the globally Lipschitz condition, whereas  the potentials in (\ref{pot1}) are only  locally Lipschitz (i.e. (\ref{es56})) when $\gamma>0$.

\end{remark}

\begin{proof}
It suffices to check that all conditions of Theorem \ref{th1} hold.  It is clear that  $(\mathbf{A1})$, $(\mathbf{A3})$ and $(\mathbf{A4})$ hold,  and for  $(\mathbf{A2})$, we  can calculate that
\begin{eqnarray*}
\!\!\!\!\!\!\!\!&&\langle b(x,\mu)-b(y,\nu),x-y\rangle
\nonumber\\
\leq~~\!\!\!\!\!\!\!\!&& C|x-y|^2+C|x-y|\int_{\mathbb{R}^d\times \mathbb{R}^d}\big(1+|y-z|^k+|y-z'|^k\big)|z-z'|\pi(dz,dz').
\end{eqnarray*}
Then by H\"{o}lder's inequality and taking infimum w.r.t.~all coupling of $\mu,\nu$ on both sides of the above inequality, we have
\begin{eqnarray*}
\!\!\!\!\!\!\!\!&&~~\langle b(x,\mu)-b(y,\nu),x-y\rangle
\nonumber\\
\leq~~\!\!\!\!\!\!\!\!&& C|x-y|^2
+C|x-y|\Big(\int_{\mathbb{R}^d\times \mathbb{R}^d}\big(1+|y-z|^k+|y-z'|^k\big)^2\pi(dz,dz')\Big)^{\frac{1}{2}}\mathbb{W}_2(\mu,\nu)\nonumber\\
\leq~~\!\!\!\!\!\!\!\!&& C\big(1+|y|^{2k}+\mu(|\cdot|^{2k})+\nu(|\cdot|^{2k})\big)|x-y|^2+\mathbb{W}_2(\mu,\nu)^2.
\end{eqnarray*}
Similarly, for  $(\mathbf{A2}''')$ we deduce that
\begin{eqnarray*}
\!\!\!\!\!\!\!\!&&\int_{\mathbb{R}^d\times \mathbb{R}^d}\langle b(x,\mu)-b(y,\nu),x-y\rangle\pi(dx,dy)
\nonumber\\
\leq~~\!\!\!\!\!\!\!\!&& C\int_{\mathbb{R}^d\times \mathbb{R}^d}|x-y|^2\pi(dx,dy)+C\int_{\mathbb{R}^d\times \mathbb{R}^d}\Bigg\{|x-y|\Big(\int_{\mathbb{R}^d\times \mathbb{R}^d}\big(1+|y-z|^k+|y-z'|^k\big)^2\pi(dz,dz')\Big)^{\frac{1}{2}}
\nonumber\\
\!\!\!\!\!\!\!\!&&~~\cdot
\Big(\int_{\mathbb{R}^d\times \mathbb{R}^d}|z-z'|^2\pi(dz,dz')\Big)^{\frac{1}{2}}\Bigg\}\pi(dx,dy)
\nonumber\\
\leq~~\!\!\!\!\!\!\!\!&&C\big(1+\mu(|\cdot|^{2k})+\nu(|\cdot|^{2k})\big)\int_{\mathbb{R}^d\times \mathbb{R}^d}|x-y|^2\pi(dx,dy).
\end{eqnarray*}
We complete the proof.
\end{proof}

Now we
consider the following MVSDE with small noise,
\begin{equation*}\label{essm1}
dX^\varepsilon(t)=-\nabla V(X^\varepsilon(t))-\nabla (W\ast\mu^\varepsilon(t))(X(t))dt+\sqrt{2\varepsilon}dB(t),~X^\varepsilon(0)=x\in\mathbb{R}^d,
\end{equation*}
where $\mu^\varepsilon(t)$ is the distribution of $X^\varepsilon(t)$.

\begin{theorem}
Suppose that  $(\mathbf{A_v})$ and $(\mathbf{A_w})$ hold. Then $\{X^\varepsilon\}_{\varepsilon>0}$
satisfies the LDP in $C([0,T]; \mathbb{R}^d)$ with the
good rate function $I$ given by (\ref{rf}).
\end{theorem}

\subsection{McKean-Vlasov SDEs for plasma type model}
Plasma model is an important model in mathematical physics,  which has the following form
\begin{equation}\label{6.4}
\partial_t f_t=\nabla\cdot\Big\{ \nabla f_t+f_t \nabla V+f_t (\nabla_xW\circledast f_t)\Big\},
\end{equation}
where $(\nabla_xW\circledast f_t)(x):=\int_{\mathbb{R}^d}\nabla_xW(x,z)f_t(z)dz$, the confinement potential $V:\mathbb{R}^d \rightarrow \mathbb{R}$ and the interaction functional
$W:\mathbb{R}^d \times\mathbb{R}^d\rightarrow \mathbb{R}$ satisfy the following conditions.
\begin{enumerate}

\item [$(\mathbf{A_v})$]\label{AV2} There exists $C,k>0$ such that $\nabla V(\cdot):\mathbb{R}^d\to\mathbb{R}^d$ is continuous and for any $x,y\in\mathbb{R}^d$,
\begin{eqnarray*}
 \!\!\!\!\!\!\!\!&&
 \langle \nabla V(x),x\rangle \geq -C(1+|x|^2),
 \nonumber\\
 \!\!\!\!\!\!\!\!&&\langle\nabla V(x)-\nabla V(y),x-y\rangle\geq-C|x-y|^2,
  \nonumber\\
 \!\!\!\!\!\!\!\!&&|\nabla V(x)|\leq C(1+|x|^k).\nonumber
\end{eqnarray*}

\item [$(\mathbf{A_w})$]\label{AW2} There exists $C,k>0$ such that $\nabla_x W(\cdot,z):\mathbb{R}^d\to \mathbb{R}^d$ is continuous and for any $x,y,z,z'\in\mathbb{R}^d$,
\begin{eqnarray*}
\!\!\!\!\!\!\!\!&&\langle \nabla_x W(x,z),x\rangle \geq -C(1+|x|^2+|z|^2),
 \nonumber\\
\!\!\!\!\!\!\!\!&&\langle\nabla_xW(x,z)-\nabla_x W(y,z),x-y\rangle\geq -C(1+|x|^k+|y|^k+|z|^k)|x-y|^2,\label{es60}
\\
\!\!\!\!\!\!\!\!&&|\nabla_x W(x,z)-\nabla_x W(x,z')|\leq C(1+|x|^k+|z|^k+|z'|^k)|z-z'|,
\nonumber\\
\!\!\!\!\!\!\!\!&&|\nabla_x W(x,z)|\leq C(1+|x|^k+|z|^k).\nonumber
\end{eqnarray*}
\end{enumerate}

The equivalent  probabilistic  version of (\ref{6.4}) is the following MVSDE
\begin{equation}\label{eqg2}
dX(t)=-\nabla V(X(t))-\big(\nabla_xW\circledast\mu(t)\big)(X(t))dt+\sqrt{2}dB(t),
\end{equation}
where $\mu(t)$ is the distribution of $X(t)$.

%

\begin{theorem}\label{th11}
Suppose that  $(\mathbf{A_v})$ and $(\mathbf{A_w})$ hold. Then for any $X(0)\in L^r(\Omega;\mathbb{R}^d)$ with $r>2k\vee 2$, (\ref{eqg2}) has a  unique strong/weak solution in the sense of Definition \ref{de1} provided satisfying
\begin{equation*}
\mathbb{E}\Big[\sup_{t\in[0,T]}|X(t)|^{r}\Big]<\infty.
\end{equation*}
\end{theorem}
\begin{proof}
The proof is similar to Theorem  \ref{th4}, so we omit the details here.
\end{proof}
\begin{remark}\label{re4} In particular, Theorem \ref{th11} can be applied to the interaction potential $W$ satisfying
$$\nabla_x W(x,z)=\frac{z}{(1+|x|^2)^\gamma},~\gamma\geq 0,$$
which is from the Cucker-Smale model (cf.~\cite{CS}).
\end{remark}

Now we consider the following MVSDE with small noise,
\begin{equation*}
dX^\varepsilon(t)=-\nabla V(X^\varepsilon (t))-\big(\nabla_xW\circledast\mu^\varepsilon(t)\big)(X^\varepsilon(t))dt+\sqrt{2\varepsilon}dB(t),~X^\varepsilon(0)=x\in\mathbb{R}^d,
\end{equation*}
where $\mu^\varepsilon(t)$ is the distribution of $X^\varepsilon(t)$.

\begin{theorem}
Suppose that  $(\mathbf{A_v})$ and $(\mathbf{A_w})$ hold.
Then $\{X^\varepsilon\}_{\varepsilon>0}$
satisfies the LDP in $C([0,T]; \mathbb{R}^d)$ with the
good rate function $I$ given by (\ref{rf}).
\end{theorem}

\subsection{McKean-Vlasov SDEs for kinetic equation}
Kinetic equation describes the motion of the particles containing the state variable $x$ and velocity variable $v$, where the derivative w.r.t.~$x$  is the velocity $v$ and the derivative w.r.t.~$v$ is the acceleration. Some of the models of swarming introduced in \cite{CS} and \cite{DCBC} do not belong to the globally bounded Lipschitz class due to their growth at infinitely leading to an interaction kernel which is only locally Lipschitz.

In this work, we consider the following kinetic PDE (also called Vlasov-Fokker-Planck equation) on $\mathbb{R}^d \times\mathbb{R}^d$
\begin{equation}\label{eq17}
\partial_t f_t+v\cdot\nabla_xf_t-\big(\nabla U(x)+\nabla W\ast_x \pi f_t\big)\cdot \nabla_{v}f_t=\Delta_{v}f_t+\nabla_{v}\cdot(vf_t)
\end{equation}
where $\ast_x$ is a convolution operator acting on variable $x$,
$\pi f_t(x):=\int_{\mathbb{R}^d}f_t(x,w)dw$ is the macroscopic density in the space of position $x\in\mathbb{R}^d$, $U$ and $W$ satisfy the following conditions.
\begin{enumerate}

\item [$(\mathbf{A_u})$]\label{AU}
There exists $C>0$ such that for $x,y\in\mathbb{R}^d$,
\begin{equation*}
 |\nabla U(x)-\nabla U(y)|\leq C|x-y|,\label{es61}
\end{equation*}

\item [$(\mathbf{A_w})$]\label{AW3}
There exists $C,k>0$ such that for $x,y\in\mathbb{R}^d$,
\begin{eqnarray*}
\!\!\!\!\!\!\!\!&&
 |\nabla W(x)-\nabla W(y)|\leq C(1+|x|^k+|y|^k)|x-y|,\label{es62}
 \\
 \!\!\!\!\!\!\!\!&&
 |\nabla W(x)|\leq C(1+|x|).\nonumber
\end{eqnarray*}
\end{enumerate}

We are interested in investigating the kinetic PDE (\ref{eq17})  through MVSDEs, that is, we consider  the following  equation
\begin{equation}\label{6.22}
\begin{cases}
\ dX(t)=V(t)dt\\
\ dV(t)=\sqrt{2}dB(t)-V(t)dt-\Big[\nabla U(X(t))+\int_{\mathbb{R}^d} \nabla W(X(t)-y)\mu(t)(dy)\Big]dt,
\end{cases}
\end{equation}
where $\mu(t)$ is the law of $X(t)$.

\begin{theorem}\label{th5}
Assume that $(\mathbf{A_u})$ and $(\mathbf{A_w})$ hold. Then for any $X(0)\in L^r(\Omega;\mathbb{R}^d)$ with $r>2k\vee 2$, (\ref{6.22}) has a unique strong/weak solution in the sense of Definition \ref{de1} provided satisfying
\begin{equation*}
\mathbb{E}\Big[\sup_{t\in[0,T]}|X(t)|^{r}\Big]<\infty.
\end{equation*}
\end{theorem}

\begin{remark}\label{re3}
 The typical application of Theorem \ref{th5} is the so-called D'Orsogna et al. model (cf.~e.g.~\cite{DCBC}). More precisely, we choose $U=0$ and $W$ is  a smooth radial potential given by
$$W(x):=-C_1e^{-|x|^2/l_1^2}+C_2e^{-|x|^2/l_2^2},$$
where $C_1,C_2$ and $l_1,l_2$ are the strengths and the typical lengths of attraction
and repulsion, respectively.
\end{remark}

\begin{proof}
We only prove $(\mathbf{A2})$, $(\mathbf{A3})$ and $(\mathbf{A2}''')$ in Theorem \ref{th1}. For any $x=(x_1,x_2)^{*}\in\mathbb{R}^d \times \mathbb{R}^d$ and $y=(y_1,y_2)^{*}\in\mathbb{R}^d \times \mathbb{R}^d$,
\begin{eqnarray*}
\!\!\!\!\!\!\!\!&&~~\langle b(x,\mu)-b(y,\nu),x-y\rangle
\nonumber\\
\leq~~\!\!\!\!\!\!\!\!&& C|x-y|^2-\langle \nabla U(x_1)-\nabla U(y_1),x_2-y_2\rangle\nonumber\\
\!\!\!\!\!\!\!\!&&~~~-\langle \int_{\mathbb{R}^d} \nabla W(x_1-z)\mu(dz)-\int_{\mathbb{R}^d} \nabla W(y_1-z')\nu(dz'),x_2-y_2\rangle\nonumber\\
\leq~~\!\!\!\!\!\!\!\!&& C|x-y|^2-\int_{\mathbb{R}^d} \langle \nabla W(x_1-z)-\nabla W(y_1-z),x_2-y_2\rangle\mu(dz)\nonumber\\
\!\!\!\!\!\!\!\!&&~~~-\int_{\mathbb{R}^d\times\mathbb{R}^d} \langle \nabla W(y_1-z)-\nabla W(y_1-z'),x_2-y_2\rangle\pi(dz,dz').
\end{eqnarray*}
Then by H\"{o}lder's inequality and taking infimum w.r.t.~the coupling of $\mu,\nu$ on both sides of the above inequality, we obtain
\begin{eqnarray*}
\!\!\!\!\!\!\!\!&&\langle b(x,\mu)-b(y,\nu),x-y\rangle\nonumber\\
\leq~~\!\!\!\!\!\!\!\!&&  C|x-y|^2+C|x_2-y_2|\big(1+|y_1|^{2k}+\nu(|\cdot|^{2k})+\mu(|\cdot|^{2k})\big)^{\frac{1}{2}}\mathbb{W}_2(\mu,\nu)\nonumber\\
\leq~~\!\!\!\!\!\!\!\!&& C\big(1+|y|^{2k}+\nu(|\cdot|^{2k})+\mu(|\cdot|^{2k})\big)|x-y|^2+\mathbb{W}_2(\mu,\nu)^2,
\end{eqnarray*}
which yields $(\mathbf{A2})$. Similar to the proof of Theorem \ref{th4}, we can show $(\mathbf{A2}''')$ holds.
As for $(\mathbf{A3})$, it follows that
\begin{align*}
\langle b(x,\mu),x\rangle&=\Big\langle \begin{pmatrix}
x_2\nonumber\\
-x_2-[\nabla U(x_1)+\int \nabla W(x_1-z)\mu(dz)]
\end{pmatrix},
\begin{pmatrix}
x_1\nonumber\\
x_2
\end{pmatrix}\Big\rangle\nonumber\\
&=\langle x_1,x_2\rangle-\langle x_2,x_2\rangle-\Big\langle \nabla U(x_1)+\int \nabla W(x_1-z)\mu(dz),x_2\Big\rangle\nonumber\\
&\leq C|x|^2+|x_2|\int_{\mathbb{R}^d}\nabla W(x_1-z)\mu(dz)\nonumber\\
&\leq C\big(1+|x|^2+\mu(|\cdot|^2)\big).
\end{align*}
Now  the proof is complete.
\end{proof}

\vspace{1mm}
For the LDP, we consider the following MVSDE with small noise
\begin{equation*}
\begin{cases}
&dX^\varepsilon(t)=V^\varepsilon(t)dt\\
&dV^\varepsilon(t)=\sqrt{2\varepsilon}dB(t)-V^\varepsilon(t)dt\\
&~~~~~~~~~~~~~~~-\Big[\nabla U(X^\varepsilon(t))+\int_{\mathbb{R}^d} \nabla W(X^\varepsilon(t)-y)\mu(t)(dy)\Big]dt,
\end{cases}
\end{equation*}
with initial value $(x,v)\in\mathbb{R}^d\times \mathbb{R}^d$.

\begin{theorem}
Suppose that  $(\mathbf{A_u})$ and $(\mathbf{A_w})$ hold.
Then $\{(X^\varepsilon,V^\varepsilon)\}_{\varepsilon>0}$
satisfies the LDP in $C([0,T]; \mathbb{R}^d\times\mathbb{R}^d)$ with the
good rate function $I$ given by (\ref{rf}).
\end{theorem}

\subsection{Further examples}
In addition to the examples above, in this part we also present some examples to illustrate the assumptions $(\mathbf{A2}')$ and $(\mathbf{A2}'')$ in Theorem \ref{th1}.

\vspace{1mm}
(i)  For the assumption $(\mathbf{A2}'')$,
we consider the following MVSDEs
\begin{equation}\label{exa}
dX(t)=\int_{\mathbb{R}}\tilde{b}(X(t),z)\mu(t)(dz)dt+\int_{\mathbb{R}}K_0\sin(X(t)-z)\mu(t)(dz)dW(t),
\end{equation}
where $K_0>0$, $\mu(t):=\mathcal{L}_{X(t)}$, $\tilde{b}:\mathbb{R}^2\rightarrow \mathbb{R}$ is bounded and satisfies for any $x,x',y,y'\in \mathbb{R}$,
$$|\tilde{b}(x,y)-\tilde{b}(x',y')|\leq (1+|x|+|x'|)(|x-x'|+|y-y'|).$$

%

\begin{theorem}
For any $\mathbb{E}e^{5|X(0)|}<\infty$, (\ref{exa}) has a unique strong/weak solution in the sense of Definition \ref{de1}  provided satisfying
$$\sup\limits_{t\in[0,T]}\mathbb{E}e^{5|X(t)|}<\infty.$$
\end{theorem}
\begin{proof}
The conditions $(\mathbf{A1})$, $(\mathbf{A3})$ and $(\mathbf{A4})$ in Theorem \ref{th1} hold obviously. We only need to prove $(\mathbf{A2}'')$.

Let $b(x,\mu):=\int_{\mathbb{R}}\tilde{b}(x,z)\mu(dz),~\sigma(x,\mu):=\int_{\mathbb{R}}\sin(x-z)\mu(dz)$.
It is easy to see that
\begin{align}\label{es64}
&|b(x,\mu)-b(y,\nu)|+\|\sigma(x,\mu)-\sigma(y,\nu)\|\nonumber\\
\leq&\int_{\mathbb{R}}|\tilde{b}(x,z)-\tilde{b}(y,z )|\mu(dz)+\int_{\mathbb{R}\times\mathbb{R}}|\tilde{b}(y,z)-\tilde{b}(y,z' )|\pi(dz,dz')\nonumber\\
&+\int_{\mathbb{R}}|\sin(x-z)-\sin(y-z)|\mu(dz)
\nonumber\\
&+\int_{\mathbb{R}\times\mathbb{R}}|\sin(x-z)-\sin(x-z')|\pi(dz,dz')\nonumber\\
\leq& (2+|x|+|y|)|x-y|+(C+2|y|)\int_{\mathbb{R}\times\mathbb{R}}|z-z'|\pi(dz,dz').
\end{align}
Then  we observe for any $|x|\vee |y|\leq R,~\mu,\nu\in \mathcal{P}_2(\mathbb{R}^d)$,
\begin{align*}
2\langle b(x,\mu)-b(y,\nu),x-y\rangle+\|\sigma(x)-\sigma(y)\|^2
\leq (C+4R)\big(|x-y|^2+\mathbb{W}_2(\mu,\nu)^2\big).
\end{align*}
Moreover, for any initial value $X(0)$ satisfying $\mathbb{E}e^{5|X(0)|}<\infty$, taking $f(x)=5x$ we can get that
$\sup\limits_{t\in[0,T]}\mathbb{E}e^{5|X(t)|}<\infty.$
Indeed, by It\^{o}'s formula we deduce that
\begin{align*}
&d{e}^{(1+|X(t)|^2)^{\frac{1}{2}}}\\
\leq & \frac{1}{2} {e}^{(1+|X(t)|^2)^{\frac{1}{2}}}\{2|b(X(t),\mathcal{L}_{X(t)})|+(1+|X(t)|^2)^{-\frac{1}{2}}\|\sigma(X(t),\mathcal{L}_{X(t)})\|^2\}dt\\
&\;-\frac{1}{2}{e}^{(1+|X(t)|^2)^{\frac{1}{2}}}{(1+|X(t)|^2)^{-\frac{1}{2}}\|\sigma(X(t),\mathcal{L}_{X(t)})\|^2}dt\\
&\;+{e}^{(1+|X(t)|^2)^{\frac{1}{2}}}(1+|X(t)|^2)^{-\frac{1}{2}}\langle X(t),\sigma(X(t),\mathcal{L}_{X(t)})dW(t)\rangle\\
\leq& C{e}^{(1+|X(t)|^2)^{\frac{1}{2}}}\{|b|_{\infty}+\|\sigma\|_{\infty}\}dt
\\
&+e^{(1+|X(t)|^2)^{\frac{1}{2}}}(1+|X(t)|^2)^{-\frac{1}{2}}\langle X(t),\sigma(X(t),\mathcal{L}_{X(t)})dW(t)\rangle.
\end{align*}
By a straightforward calculation, we have
$$\sup\limits_{t\in[0,T]}\mathbb{E}e^{5|X(t)|}<\sup\limits_{t\in[0,T]}\mathbb{E}e^{5(1+|X(t)|^2)^{\frac{1}{2}}}<\infty.$$
Thus,  $(\mathbf{A2}'')$  holds.
\end{proof}
\begin{remark}
We mention that the interaction potential $F(u):=K_0\sin u$ acts as an attractive force between particles  and the parameter $K_0 > 0$ modulates the strength of the force, which is referred as the Kuramoto model (or also the mean field classical XY model) that was originally from systems of chemical and biological oscillators, see e.g. \cite[(1.5)]{ABKO} and references therein.
\end{remark}

(ii) For the self-containedness of this work, we also give an example to illustrate  the application of $(\mathbf{A2}')$, which is inspired by \cite[Example 1.1]{RTW}. More precisely,  we consider the following MVSDEs on $\mathbb{R}$
\begin{equation}\label{exa2}
dX(t)=h\Big(\int_{\mathbb{R}}F(x)\mu(t)(dx)\Big)X(t)dt+h\Big(\int_{\mathbb{R}}F(x)\mu(t)(dx)\Big)X(t)dW(t),
\end{equation}
where $\mu(t):=\mathcal{L}_{X(t)}$, $h:\mathbb{R}\to\mathbb{R}$ is bounded and  Lipschitz continuous, $F$ is given by (constant $N>0$)
\begin{equation*}
F(x):=\left\{ \begin{aligned}
&|x|,~~|x|\leq N;
\\
&N,~~~~|x|> N.
\end{aligned} \right.
\end{equation*}

Then we have the following result.
\begin{theorem}
For any $X(0)\in L^r(\Omega;\mathbb{R})$ with $r>2$, (\ref{exa2}) has a unique strong/weak solution in the sense of Definition \ref{de1}  provided satisfying
\begin{equation*}
\mathbb{E}\Big[\sup_{t\in[0,T]}|X(t)|^{r}\Big]<\infty.
\end{equation*}
\end{theorem}

\begin{proof}
Following a similar argument as in \cite[Example 1.1]{RTW}, we can see that the coefficients of (\ref{exa2}) satisfy $(\mathbf{A1})$-$(\mathbf{A4})$ and $(\mathbf{A2}')$, then the result follows from Theorem \ref{th1}. Here we present the proof of $(\mathbf{A2}')$ for the readers' convenience.

Let
$$b(x,\mu)=\sigma(x,\mu):=h\Big(\int_{\mathbb{R}}F(x)\mu(dx)\Big)x.$$
First, for any $R\geq N$, $\xi,\eta\in \mathcal{C}_T$, $\mu,\nu\in \mathcal{P}_{2,T}$ and $t\in[0,T\wedge\tau_R^{\xi}\wedge\tau_R^{\eta}]$,
\begin{equation}\label{eqc4}
|b(\xi(t),\mu(t))-b(\eta(t),\nu(t))|\leq C_R\big(|\xi(t)-\eta(t)|+|\mu(t)(F(\cdot))-\nu(t)(F(\cdot))|\big).
\end{equation}
Denote
$$\|\xi-\eta\|_{\tau_R}:=\sup_{t\in[0,T\wedge\tau_R^{\xi}\wedge\tau_R^{\eta}]}|\xi(t)-\eta(t)|.$$
When $R\geq N$ and $t\in[0,T\wedge\tau_R^{\xi}\wedge\tau_R^{\eta}]$, we deduce that
\begin{equation*}
|F(\xi(t))-F(\eta(t))|\left\{ \begin{aligned}
&=||\xi(t)|-|\eta(t)||\leq \|\xi-\eta\|_{\tau_R},~|\xi(t)|\leq N,~|\eta(t)|\leq N;
\\
&=|N-|\eta(t)||\leq \|\xi-\eta\|_{\tau_R},~~~~~~|\xi(t)|> N,~|\eta(t)|\leq N;
\\
&=||\xi(t)|-N|\leq \|\xi-\eta\|_{\tau_R},~~~~~~|\xi(t)|\leq N,~|\eta(t)|> N;
\\
&=0\leq \|\xi-\eta\|_{\tau_R},~~~~~~~~~~~~~~~~~~~~~~~|\xi(t)|> N,~|\eta(t)|> N.
\end{aligned} \right.
\end{equation*}
Therefore, it follows that
$$|\mu(t)(F(\cdot))-\nu(t)(F(\cdot))|\leq \mathbb{W}_{2,T.R}(\mu_t,\nu_t),$$
which together with (\ref{eqc4}) implies $(\mathbf{A2}')$ holds.
\end{proof}

\section{Applications in infinite-dimensional case} \label{example1}
We denote by $\Lambda\subseteq\mathbb{R}^d$ an open bounded domain with a smooth boundary.  Let
$C_0^\infty(\Lambda, \mathbb{R}^d)$ be the space of all infinitely differentiable functions from $\Lambda$ to $\mathbb{R}^d$ with compact support.  For any $p\ge 1$, let $L^p(\Lambda, \mathbb{R}^d)$ be the vector valued $L^p$-space equipped with the norm $\|\cdot\|_{L^p}$.
For any integer $m>0$, let $W_0^{m,p}(\Lambda, \mathbb{R}^d)$ be the classical Sobolev space defined on $\Lambda$
and taking values in $\mathbb{R}^d$ with the (equivalent) norm
$$ \|u\|_{W^{m,p}} = \left( \sum_{0\le |\alpha|\le m} \int_{\Lambda} |D^\alpha u|^pd x \right)^\frac{1}{p}.$$

%

In what follows, we apply our main results to a class of nonlinear SPDEs perturbed  by interaction external force, typically,  the stochastic porous media equation, stochastic 2D hydrodynamical systems as examples.  Besides the aforementioned examples, our  well-posedness and LDP results are also applicable to the models derived in \cite{CKS,BKK,ES1}, and stochastic Burgers equations, stochastic power law fluid equation, see e.g.~\cite{LR2,LR13,LR1,LTZ}. In order to keep down the length of the paper, here we only give the following two class of examples in details.
\subsection{McKean-Vlasov stochastic 2D hydrodynamical systems}
Now we apply our general framework established in Theorems \ref{th2} and \ref{th3} to  McKean-Vlasov stochastic 2D hydrodynamical type systems, which include a large class of mathematical models arise in fluid dynamics (cf. e.g. \cite{CM}).

Let $H$ be a separable Hilbert space equipped with norm $\|\cdot\|_H$ and $L$ be an (unbounded) positive linear self-adjoint operator on $H$. Define
$V=\mathcal{D}(L^\frac{1}{2})$ and the associated norm $\|v\|_V=\|L^\frac{1}{2}v\|_H$ for any $v\in V$. Let $V^*$ be the dual space of $V$ with respect to the scalar product
$(\cdot,\cdot)$ on $H$.
Then we can consider a Gelfand triple $V \subset H \subset V^*$. Let us denote by $\langle{u},v\rangle$ the dualization between $u\in V$ and $v\in V^*$.
There exists an orthonormal basis $\{e_k\}_{k\geq 1}$ on $H$ of eigenfunctions of $L$ and the increasing eigenvalue sequence $0<\lambda_1\leq\lambda_2\leq...\leq\lambda_n\leq...\uparrow\infty.$

Let $F:V\times V \to V^*$ be a continuous map satisfying the following conditions.

\begin{enumerate}
\item[(C1)] $F: V \times V \to V^*$ is a continuous bilinear map.

\item [(C2)] For all $u_i \in V, i=1,2,3$,
\begin{equation*}
\langle F(u_1,u_2),u_3 \rangle = -\langle F(u_1,u_3),u_2\rangle,~~\langle F(u_1,u_2),u_2 \rangle=0.
\end{equation*}

\item[(C3)] There exists a Banach space $\mathcal{H}$ such that
\\
(i) $V \subset \mathcal{H} \subset H;$
\\
(ii) there exists a constant $a_0>0$ such that
\begin{equation*}
\|u\|_\mathcal{H}^2 \le a_0 \|u\|_H\|u\|_V~~\text{for all} ~ u\in V;
\end{equation*}
(iii) for every $\eta>0$ there exists a constant $C_\eta>0$ such that
\begin{equation*}
|\langle F(u_1,u_2),u_3 \rangle| \leq \eta\|u_3\|_V^2+C_\eta\|u_1\|_{\mathcal{H}}^2\|u_2\|_{\mathcal{H}}^2
~~\text{for all}~ u_i\in V,  i=1,2,3.
\end{equation*}
\end{enumerate}
For simplicity, we denote $F(u)=F(u,u)$.  Now we consider
the following McKean-Vlasov stochastic 2D hydrodynamical type systems
\begin{equation}\label{hys}
dX(t)+\big[L X(t)+F(X(t))\big]dt=\Phi(t,X(t),\mathcal{L}_{X(t)})dt+BdW(t).
\end{equation}

\begin{theorem}\label{2Dhys}
Assume that the embedding $V\subset H$ is compact, $B\in L_2(U,H)$, $F$ satisfies (C1)-(C3) and $\Phi$ satisfies $(\mathbf{H1})$-$(\mathbf{H4})$ and $(\mathbf{H2}')$. Then for any $X(0)\in L^p(\Omega,\mathcal{F}(0),\mathbb{P};H)$ with $p> 4\vee\theta$, (\ref{hys}) has a unique solution in the sense of Definition \ref{de2} provided satisfying
\begin{equation*}
\mathbb{E}\Big[\sup_{t\in[0,T]}\|X(t)\|_H^p\Big]+\mathbb{E}\int_0^T\|X(t)\|_V^{2}dt+\mathbb{E}\int_0^T\|X(t)\|_H^{p-2}\|X(t)\|_V^{2}dt<\infty.
\end{equation*}
\end{theorem}

\begin{proof}
It is easy to prove that  $(\mathbf{H1})$-$(\mathbf{H4})$ hold for $\tilde{A}(u):=-Lu-F(u,u)$ with $\alpha=\beta=2$,  we omit the details here (see. e.g. \cite{CM} or \cite{LR2}).
\end{proof}
\begin{remark}
For a concrete example of $\Phi$ one can see (\ref{exa2}) with $H$ replacing $\mathbb{R}$, we do not repeat here.
\end{remark}

Next, we  consider the following McKean-Vlasov stochastic hydrodynamical type systems with small Gaussian noise
\begin{equation}\label{hys1}
dX^\varepsilon(t)=-\Big[L X^\varepsilon(t)+F(X^\varepsilon(t))+\Phi(t,X^\varepsilon(t),\mathcal{L}_{X^\varepsilon(t)})\Big]dt
+\sqrt{\varepsilon}BdW(t),
\end{equation}
with the initial value $X^\varepsilon(0)=x\in H$.

\begin{theorem}
Assume that the conditions in Theorem \ref{2Dhys} hold. Then $\{X^\varepsilon\}_{\varepsilon>0}$
satisfies the LDP in $C([0,T]; H)$ with the
good rate function $I$ given by (\ref{rf}).
\end{theorem}

\begin{remark}\label{remark6.2}
(i) As stated in  \cite{CM}, the results obtained in this subsection are applicable to various hydrodynamical models with distribution dependent coefficients, e.g. 2D Navier-Stokes equation, magneto-hydrodynamic equations,  Boussinesq equations, etc.

(ii) It is interesting to investigate the mean field limit problem of following $N$-interacting 2D hydrodynamical systems
\begin{equation*}
dX^{N,i}(t)=-\Big[LX^{N,i}(t)+F(X^{N,i}(t))\Big]dt+\frac{1}{N}\sum_{j=1}^N\big(X^{N,i}(t)-X^{N,j}(t)\big)dt+dW^i(t),
\end{equation*}
where the interaction kernel $K(u,v):=u-v$ is called the ``Stokes drug force'' in the dynamics of fluid, which is  proportional to the relative velocity of particles. This type model could be used to describe the dynamics of interaction between large number of particles in fluids, it allows us to consider micro-organisms like bacteria who can ``swim" in the fluid and one interesting subject is to study  its microscopic limit. Such propagation of chaos problem for the $N$-interacting 2D hydrodynamical systems will be investigated in the future work.
\end{remark}

Besides the semilinear SPDEs, our main results  are also applicable to a class of McKean-Vlasov quasilinear SPDEs.

\subsection{McKean-Vlasov stochastic porous media equation}
Denote by $(E,\mathcal{M},\textbf{m})$ a separable probability space and $(L,\mathcal{D}(L))$ a negative definite linear self-adjoint map defined on
$(L^2(\textbf{m}),\langle\cdot,\cdot\rangle)$, which has discrete spectrum with eigenvalues
$$0>-\lambda_1\geq-\lambda_2\geq\cdots\rightarrow-\infty.$$
Let $H$ be the topological dual space of $\mathcal{D}(\sqrt{-L})$, which is equipped with the inner product
$$\langle u,v\rangle_{H}:=\int_E\big(\sqrt{-L}u(\xi)\big)\cdot\big(\sqrt{-L}v(\xi)\big)d\xi,~u,v\in H,$$
then identify $L^2(\textbf{m})$ with its dual, we can get the continuous and dense embedding
$$\mathcal{D}(\sqrt{-L})\subseteq L^2(\textbf{m})\subseteq H.$$
Assume that $L^{-1}$ is continuous in $V:=L^{r+1}(\textbf{m})$, where $r>1$ is a fixed number. Then we have a presentation of its dual space $V^*$ by the following embedding
$$V\subset H\simeq \mathcal{D}(\sqrt{-L})\subset V^*,$$
where $\simeq$ is understood through $\sqrt{-L}$.

Now  we consider the following McKean-Vlasov stochastic porous media equation:
\begin{equation}\label{PME}
dX(t)=\Big[L\Psi(X(t))+\Phi(t,X(t),\mathcal{L}_{X(t)})\Big]dt+BdW(t),
\end{equation}
where $W(t)$ is a cylindrical Wiener process defined
on a probability space $(\Omega,\mathcal {F},\{\mathcal{F}(t)\}_{t\in[0,T]},\mathbb{P})$ and taking values in a sparable Hilbert space $U$, $\Psi:\mathbb{R}\rightarrow\mathbb{R}$
is a continuous and measurable map such that there are some constants $\delta>0$ and $K$,
\begin{eqnarray}
&&|\Psi(s)|\leq K(1+|s|^r),~~s\in \mathbb{R};
\label{PME3}\\
&&-\langle\Psi(u)-\Psi(v),u-v\rangle\leq-\delta\|u-v\|_{V}^{r+1},~~u,v\in V.\label{PME4}
\end{eqnarray}

\begin{theorem}\label{main result PME2}
Suppose that the embedding $V\subset H$ is compact, $\Psi$ satisfy the conditions (\ref{PME3})-(\ref{PME4}) and $\Phi$ satisfies $(\mathbf{H1})$-$(\mathbf{H4})$ and $(\mathbf{H2}'')$. Then for any $X(0)\in L^p(\Omega,\mathcal{F}(0),\mathbb{P};H)$ with $p>2r\vee \theta$,
(\ref{PME}) admits a unique solution in the sense of Definition \ref{de2}  provided satisfying
\begin{equation*}
\mathbb{E}\Big[\sup_{t\in[0,T]}\|X(t)\|_H^p\Big]+\mathbb{E}\int_0^T\|X(t)\|_V^{r+1}dt+\mathbb{E}\int_0^T\|X(t)\|_H^{p-2}\|X(t)\|_V^{r+1}dt<\infty.
\end{equation*}
\end{theorem}

\begin{proof}
By a standard argument  we can show that the map $\tilde{A}:=L\Psi$ satisfy $(\mathbf{H1})$-$(\mathbf{H4})$ for $\rho\equiv0,~\beta=0,~\alpha=r+1$, we refer to \cite[Example 4.1.11]{LR1} for the detailed proof.
 Therefore, the conclusion is a direct consequence of Theorem \ref{th2}.
\end{proof}

\vspace{3mm}
Next, we consider  the following McKean-Vlasov stochastic porous media equation with small Gaussian noise
\begin{equation}\label{PME1}
dX^\varepsilon(t)=\Big[L\Psi(X^\varepsilon(t))+\Phi(t,X^\varepsilon(t),\mathcal{L}_{X^\varepsilon(t)})\Big]dt+\sqrt{\varepsilon}BdW(t),
\end{equation}
where the initial value $X^\varepsilon(0)=x\in H$.

\vspace{1mm}
As a consequence of Theorem \ref{th3}, we can obtain the LDP result for  (\ref{PME1}).
\begin{theorem}
Assume that the conditions in Theorem \ref{main result PME2} hold. Then $\{X^\varepsilon\}_{\varepsilon>0}$
satisfies the LDP in $C([0,T]; H)$ with the
good rate function $I$ given by (\ref{rf}).
\end{theorem}

\begin{remark}
A typical example is  $L=\Delta$, the Laplace operator on a smooth bounded domain in a
complete Riemannian manifold with Dirichlet boundary and
$$\Psi(s):=|s|^{r-1}s,~s\in\mathbb{R},r>1.$$
 The well-posedness and LDP result for McKean-Vlasov stochastic porous media equation have been established in the recent works \cite{HE,HL1,HLL3}, however, under our new framework constructed in Section \ref{secin},  one can easily extend the results of \cite{HE,HL1,HLL3} to more general cases.


\end{remark}

\begin{appendix}
\section*{}\label{secapp} 
We first recall the following   criterion of tightness (cf.~Theorems 4.10 and 4.11 in \cite{KS1}).
\begin{lemma}\label{lem1} (Tightness)
For any $T>0$, the family $\{X^{(n)}\}_{n\in\mathbb{N}}$ is tight in $C([0,T];\mathbb{R}^d)$ if  the following two conditions hold.

(i) There exists  a constant $r>0$ such that
\begin{equation*}
\sup_{n\in\mathbb{N}}\mathbb{E}|X^{(n)}(0)|^r<\infty.
\end{equation*}

(ii) There exist constants $r,\delta>0$ such that for any $0\leq s,t\leq T$,
\begin{equation*}
\sup_{n\in\mathbb{N}}\mathbb{E}|X^{(n)}(t)-X^{(n)}(s)|^r\leq C_T|t-s|^{1+\delta}.
\end{equation*}

\end{lemma}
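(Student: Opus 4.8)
The plan is to derive tightness from the Arzel\`a--Ascoli compactness criterion in $C([0,T];\mathbb{R}^d)$ combined with the Kolmogorov--Chentsov moment estimate. Recall that a set $K\subset C([0,T];\mathbb{R}^d)$ is relatively compact precisely when $\sup_{f\in K}|f(0)|<\infty$ and $K$ is equicontinuous. Hence it suffices to show that for each $\eta>0$ there is a relatively compact set $K_\eta$ with $\sup_{n\in\mathbb{N}}\mathbb{P}(X^{(n)}\notin\overline{K_\eta})\le\eta$; the tightness of the laws $\{\mathcal{L}_{X^{(n)}}\}$ then follows by definition. For the initial positions, condition $(i)$ and Chebyshev's inequality give $\mathbb{P}(|X^{(n)}(0)|>A)\le A^{-r}\sup_{n}\mathbb{E}|X^{(n)}(0)|^r$, which is below $\eta/2$ uniformly in $n$ once $A=A(\eta)$ is large.

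The main step is to promote the pointwise bound $(ii)$ to a uniform-in-$n$ bound on a H\"older seminorm. Fix $\gamma\in(0,\delta/r)$. By the dyadic chaining argument underlying the Kolmogorov--Chentsov theorem (or, equivalently, the Garsia--Rodemich--Rumsey inequality), there is a constant $C=C(T,r,\delta,\gamma,C_T)$, \emph{independent of $n$}, such that
\[
\mathbb{E}\Big[\sup_{0\le s<t\le T}\frac{|X^{(n)}(t)-X^{(n)}(s)|^r}{|t-s|^{\gamma r}}\Big]\le C,\qquad n\in\mathbb{N}.
\]
The idea is to write an increment of $X^{(n)}$ along a dyadic partition of $[0,T]$ as a telescoping sum, estimate each term via $(ii)$, and sum the resulting geometric series whose ratio is controlled by the surplus exponent $\delta-\gamma r>0$; since the paths are already continuous, no modification is needed. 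One further application of Chebyshev yields $\mathbb{P}(\|X^{(n)}\|_{C^{\gamma}([0,T])}>M)\le M^{-r}C<\eta/2$ for $M=M(\eta)$ large, uniformly in $n$.

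Finally, set $K_\eta:=\{f\in C([0,T];\mathbb{R}^d):|f(0)|\le A,\ \sup_{s\ne t}|f(t)-f(s)|/|t-s|^{\gamma}\le M\}$. Each such $f$ satisfies $\|f\|_\infty\le A+MT^{\gamma}$ and has modulus of continuity bounded by $M\rho^{\gamma}$, so $K_\eta$ is bounded and equicontinuous, hence relatively compact by Arzel\`a--Ascoli, and $\overline{K_\eta}$ is compact. Combining the two estimates gives $\sup_{n}\mathbb{P}(X^{(n)}\notin\overline{K_\eta})\le\eta$, which proves tightness. The hard part is the uniform H\"older-seminorm bound: the chaining must be carried out with every constant independent of $n$, and this is exactly where the strict exponent $1+\delta>1$ in $(ii)$ enters, since convergence of the chaining series requires $\gamma r<\delta$.
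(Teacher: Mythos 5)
Your proof is correct and is essentially the standard argument behind this classical criterion: the paper itself does not prove the lemma but simply recalls it from Karatzas--Shreve, where the proof is exactly your route --- a chaining (Kolmogorov--Chentsov/Garsia--Rodemich--Rumsey) upgrade of the increment bound (ii) to a uniform-in-$n$ moment bound on a $\gamma$-H\"older seminorm with $\gamma r<\delta$, followed by Chebyshev and the Arzel\`a--Ascoli characterization of compact sets in $C([0,T];\mathbb{R}^d)$. You correctly identify the two points that matter: the chaining constant must not depend on $n$, and the surplus exponent $1+\delta>1$ is what makes the dyadic series converge.
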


The Skorokhod theorem allows for the representation of the limit measure of a weakly convergent probability measures sequence on a metric space as the distribution of a pointwise convergent random variables sequence defined on a common probability space.

\begin{lemma}\label{lem22}
(Skorokhod representation theorem)
Let $E$ be a separable metric space. For an arbitrary sequence of probability measures $\{\mu_n\}_{n\geq1}$ on $\mathcal{B}(E)$ weakly convergence to a probability measure
$\mu$,  then there exists a probability space $(\tilde{\Omega},\tilde{\mathcal{F}},\tilde{\mathbb{P}})$ and a sequence of random variables $\tilde{X}_n,\tilde{X}$ such that they coincide in distribution, under $\tilde{\mathbb{P}}$, with $\mu_n,\mu$, respectively. Furthermore,  $\tilde{X}_n\to \tilde{X}$, $\tilde{\mathbb{P}}$-a.s., as $n\to\infty$.
\end{lemma}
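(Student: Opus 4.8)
The plan is to reduce everything to couplings on the unit interval and then run Skorokhod's classical nested--partition construction; the one--dimensional case $E=\RR$ already displays the whole mechanism. First I would dispose of $E=\RR$: let $F_n,F$ be the distribution functions of $\mu_n,\mu$, put $\tilde\Omega=(0,1)$ with $\tilde{\mathbb P}=$ Lebesgue measure, and define the left--continuous quantiles $\tilde X_n(\omega)=\inf\{x:F_n(x)\ge\omega\}$ and $\tilde X(\omega)=\inf\{x:F(x)\ge\omega\}$. From $\tilde X_n(\omega)\le x\iff\omega\le F_n(x)$ one checks $\tilde X_n$ has law $\mu_n$ and $\tilde X$ has law $\mu$, and since weak convergence $\mu_n\to\mu$ is equivalent to $F_n\to F$ at every continuity point of $F$, one gets $\tilde X_n(\omega)\to\tilde X(\omega)$ at every continuity point $\omega$ of the monotone map $\omega\mapsto\tilde X(\omega)$, i.e. for all but countably many $\omega$, hence $\tilde{\mathbb P}$--a.s.

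For a general Polish space $E$ the same idea is realized through a refining sequence of countable Borel partitions. The key steps are: (i) using separability of $E$, build for each $m\ge1$ a countable Borel partition $\mathcal P_m=\{B^m_i\}_i$ of $E$ with $\mathrm{diam}(B^m_i)<2^{-m}$, with $\mathcal P_{m+1}$ refining $\mathcal P_m$ and, crucially, with $\mu(\partial B^m_i)=0$ for all $i,m$ (since only countably many radii of balls about a fixed countable dense set can carry positive $\mu$--boundary mass, such radii are avoided), so that by the portmanteau theorem $\mu_n(B^m_i)\to\mu(B^m_i)$ as $n\to\infty$ for every $i,m$; (ii) on $\tilde\Omega=(0,1)$ with Lebesgue measure, for each $n$ cut $(0,1)$ into subintervals $I^{m,n}_i$ of lengths $\mu_n(B^m_i)$, listed in a fixed order and nested so that the intervals of level $m+1$ subdivide those of level $m$, and do the same for $\mu$ obtaining intervals $I^m_i$; then pick $x^m_i\in B^m_i$ and set $\tilde X^m_n=x^m_i$ on $I^{m,n}_i$ and $\tilde X^m=x^m_i$ on $I^m_i$; (iii) since the levels refine, $\{\tilde X^m_n(\omega)\}_m$ is Cauchy with steps bounded by $C\,2^{-m}$, so $\tilde X_n:=\lim_m\tilde X^m_n$ exists and, by direct computation with the cylinder sets generating $\mathcal B(E)$, has law $\mu_n$; likewise $\tilde X:=\lim_m\tilde X^m$ has law $\mu$; (iv) because $\mu_n(B^m_i)\to\mu(B^m_i)$, the endpoints of $I^{m,n}_i$ converge to those of $I^m_i$ as $n\to\infty$, so for every $\omega$ outside the countable (hence $\tilde{\mathbb P}$--null) set of all limiting endpoints and every fixed $m$ one has $\tilde X^m_n(\omega)=\tilde X^m(\omega)$ for $n$ large; combining with the diameter bound gives $\limsup_n d(\tilde X_n(\omega),\tilde X(\omega))\le C\,2^{-m}$, and letting $m\to\infty$ yields $\tilde X_n\to\tilde X$ $\tilde{\mathbb P}$--a.s.

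The main obstacle is the combinatorial bookkeeping in step (ii): the subintervals of $(0,1)$ must be organized so as to be simultaneously refining in $m$ and ``aligned'' across all $n$ together with the limit, so that the scalar convergences $\mu_n(B^m_i)\to\mu(B^m_i)$ translate into convergence of the interval endpoints --- this alignment is exactly what couples every $\tilde X_n$ to the single limit $\tilde X$ on one common space and is the heart of the proof. A secondary technical point is the choice in step (i) of partitions with $\mu$--null boundaries, which is needed for the portmanteau step. Both points are standard; I would either carry them out in detail or, since the statement is classical, simply invoke \cite{KS1}.
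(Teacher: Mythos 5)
The paper does not prove this lemma at all: it is stated as the classical Skorokhod representation theorem and simply invoked as a known background fact (in the spirit of the other appendix lemmas, which are quoted from the literature). Your outline is the standard textbook construction — quantile coupling on $(0,1)$ in the real case, and in the general Polish case nested countable partitions with $\mu$-null boundaries, aligned subintervals of $(0,1)$ whose endpoints converge by the portmanteau theorem, and a diameter bound to pass to the limit — so it is a correct proof of exactly the statement the paper takes for granted. The only step you compress noticeably is the identification of the law of the limiting variable $\tilde X=\lim_m\tilde X^m$ with $\mu$ (and likewise for $\tilde X_n$): since $\tilde X(\omega)$ only lies in the closure $\overline{B^m_i}$ for $\omega\in I^m_i$, one needs either the $\mu$-null-boundary property again or a closed-set/portmanteau approximation to conclude; this is routine but worth writing out if you carry out the details rather than citing the result.
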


Now we recall the following Vitali's convergence theorem, which is often used to prove the $L^p$-convergence.
\begin{lemma} (Vitali's convergence theorem)\label{lem9}
Let $p\geq1$, the random variables sequence $\{X_n\}_{n\geq1}\subset L^p(\Omega)$, then the following statement are equivalent:

\vspace{2mm}
(i) $X_n\to X$ in $L^p(\Omega)$-sense  as $n\to\infty$.

(ii) $X_n\to X$ in probability as $n\to\infty$ and $|X_n|^p$ is  uniformly integrable;

(iii) $X_n\to X$ in probability and $\mathbb{E}|X_n|^p\to\mathbb{E}|X|^p$ as $n\to\infty$ .
\end{lemma}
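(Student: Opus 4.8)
The plan is to establish the equivalences by the cyclic chain $(i)\Rightarrow(ii)\Rightarrow(iii)\Rightarrow(i)$, using only Chebyshev's inequality, the triangle inequality in $L^p$, Fatou's lemma, the bounded convergence theorem, and a truncation argument for uniform integrability. A device used repeatedly is that, whenever only convergence in probability is available, every subsequence of $\{X_n\}$ admits a further subsequence converging $\mathbb{P}$-a.s.\ to $X$, so that a.s.-type tools may be applied along it and then transferred back to the whole sequence. For $(i)\Rightarrow(ii)$, convergence in probability is immediate from $\mathbb{P}(|X_n-X|>\lambda)\le\lambda^{-p}\mathbb{E}|X_n-X|^p\to 0$, while $\{|X_n|^p\}$ is uniformly integrable because, for $\mathbb{P}(A)$ small and $n$ large, $\int_A|X_n|^p\le 2^{p-1}\big(\mathbb{E}|X_n-X|^p+\int_A|X|^p\big)$ is small (the fixed integrable function $|X|^p$ being itself uniformly integrable), the finitely many remaining indices forming a finite, hence uniformly integrable, family; the $L^p$-boundedness that uniform integrability also requires follows from $\|X_n\|_{L^p}\le\|X\|_{L^p}+\sup_n\|X_n-X\|_{L^p}<\infty$.

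For $(ii)\Rightarrow(iii)$, I would first extract an a.s.-convergent subsequence and combine Fatou's lemma with the $L^p$-boundedness coming from uniform integrability to get $X\in L^p$; then $|X_n|^p\to|X|^p$ in probability and $\{|X_n|^p\}$ is uniformly integrable, so $\mathbb{E}|X_n|^p\to\mathbb{E}|X|^p$ by the elementary lemma that a uniformly integrable sequence of nonnegative random variables converging to $Y$ in probability satisfies $\mathbb{E}Y_n\to\mathbb{E}Y$. That lemma is a one-liner: $|\mathbb{E}Y_n-\mathbb{E}Y|\le\mathbb{E}|Y_n\wedge K-Y\wedge K|+\sup_m\int_{\{Y_m>K\}}Y_m+\int_{\{Y>K\}}Y$, where the first term tends to $0$ by the bounded convergence theorem under convergence in probability and the last two are made arbitrarily small by taking $K$ large, using uniform integrability and $Y\in L^1$.

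For $(iii)\Rightarrow(i)$, which is the step demanding the most care, I would set $h_n:=2^{p-1}\big(|X_n|^p+|X|^p\big)-|X_n-X|^p\ge 0$. Along any subsequence on which $X_n\to X$ a.s.\ (and having first noted, again by Fatou and the convergence of $\mathbb{E}|X_n|^p$, that $X\in L^p$), Fatou's lemma gives $2^p\,\mathbb{E}|X|^p\le\liminf\big(2^{p-1}(\mathbb{E}|X_n|^p+\mathbb{E}|X|^p)-\mathbb{E}|X_n-X|^p\big)=2^p\,\mathbb{E}|X|^p-\limsup\mathbb{E}|X_n-X|^p$, whence $\mathbb{E}|X_n-X|^p\to 0$ along that subsequence; a subsequence-of-subsequence argument upgrades this to $\mathbb{E}|X_n-X|^p\to 0$ for the full sequence, which is $(i)$.

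The proof is routine in outline, and the only genuinely delicate points are of bookkeeping rather than of idea: that $(ii)$ and $(iii)$ assume convergence only in probability, which forces the recurring passage to a.s.-convergent subsequences and the use of bounded/dominated convergence in that setting; and that uniform integrability must be manipulated as a supremum over all $n$, with the finitely many initial indices absorbed separately. If one is willing to cite the classical $p=1$ version of Vitali's theorem, the last two implications shorten considerably; I would instead keep the short truncation lemma above to make the argument self-contained.
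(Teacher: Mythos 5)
The paper does not prove this lemma at all: it is recalled in the Appendix as a classical textbook result (Vitali's convergence theorem) and used as a black box, so there is no in-paper argument to compare against. Your proof is correct and complete — the cycle $(i)\Rightarrow(ii)\Rightarrow(iii)\Rightarrow(i)$, the truncation lemma for $(ii)\Rightarrow(iii)$, the Fatou argument applied to $h_n=2^{p-1}(|X_n|^p+|X|^p)-|X_n-X|^p$ for $(iii)\Rightarrow(i)$, and the subsequence-of-subsequences device to pass from a.s.-convergent subsequences back to the full sequence are all the standard ingredients, correctly assembled. The only point worth flagging is the implicit convention in $(iii)$ that the limit $\mathbb{E}|X|^p$ is finite (equivalently $X\in L^p$), which you address via Fatou and the boundedness of the convergent numerical sequence $\mathbb{E}|X_n|^p$; this is fine.
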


The following modified version of Yamada-Watanabe theorem is established recently in \cite[Lemma 2.1]{HW1}, which plays an important role in proving the existence of strong solution from the weak solution.
\begin{lemma}\label{lem2}(Modified Yamada-Watanabe theorem)
Assume that the distribution dependent SDE
\begin{equation}\label{eq13}
dX(t)=b(t,X(t),\mathcal{L}_{X(t)})dt+\sigma(t,X(t),\mathcal{L}_{X(t)})dW(t)
\end{equation}
has a weak solution $\{X(t)\}_{t\in[0,T]}$ under the probability measure $\mathbb{P}$, and let $\mu(t)=\mathcal{L}_{X(t)}|_{\mathbb{P}}$, $t\in[0,T]$.
If the SDE
$$dX(t)=b(t,X(t),\mu(t))dt+\sigma(t,X(t),\mu(t))dW(t)$$
has strong uniqueness for some initial value $X(0)$ with $\mathcal{L}_{X(0)}=\mu(0)$, then (\ref{eq13}) has a strong solution starting at $X(0)$. Moreover, if (\ref{eq13}) has strong uniqueness for any initial value with $\mathcal{L}_{X(0)}=\mu(0)$, then it is weakly well-posed for the initial distribution $\mu(0)$.
\end{lemma}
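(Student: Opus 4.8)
The plan is to freeze the marginal law, reduce everything to the \emph{classical} (non--distribution-dependent) Yamada--Watanabe theorem, and then transfer the conclusions back. Given the weak solution $\{X(t)\}_{t\in[0,T]}$ under $\mathbb{P}$ driven by $W$, the flow $\mu(t)=\mathcal{L}_{X(t)}|_{\mathbb{P}}$ is a fixed deterministic curve in $\mathcal{P}(\mathbb{R}^d)$, so $(X,W)$ is already a weak solution of the time-inhomogeneous classical SDE
$$dX(t)=b^{\mu}(t,X(t))\,dt+\sigma^{\mu}(t,X(t))\,dW(t),\qquad b^{\mu}(t,x):=b(t,x,\mu(t)),\quad \sigma^{\mu}(t,x):=\sigma(t,x,\mu(t)).$$
First I would invoke the hypothesis that this frozen equation has strong (pathwise) uniqueness for initial laws $\mu(0)$; together with the weak existence just recorded, the classical Yamada--Watanabe theorem supplies (a) a measurable map $F\colon\mathbb{R}^d\times C([0,T];\mathbb{R}^m)\to C([0,T];\mathbb{R}^d)$, adapted in the usual sense, such that $\widetilde X:=F(\xi,W)$ solves the frozen equation whenever $W$ is a Brownian motion and $\xi$ is $\mathcal{F}_0$-measurable with $\mathcal{L}_{\xi}=\mu(0)$ (i.e.\ a strong solution of the frozen equation), and (b) uniqueness in law for the frozen equation with that initial distribution.

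The second step is to identify $\widetilde X$ as a solution of the McKean--Vlasov equation (\ref{eq13}). By (b), every weak solution of the frozen equation with initial law $\mu(0)$ has marginal flow equal to $\mu$; since both $X$ and $\widetilde X$ are such solutions, $\mathcal{L}_{\widetilde X(t)}=\mu(t)$ for all $t\in[0,T]$. Hence $b^{\mu}(t,\widetilde X(t))=b(t,\widetilde X(t),\mathcal{L}_{\widetilde X(t)})$ and $\sigma^{\mu}(t,\widetilde X(t))=\sigma(t,\widetilde X(t),\mathcal{L}_{\widetilde X(t)})$, so $\widetilde X$ is in fact a strong solution of (\ref{eq13}) started at $\xi$, which proves the first assertion.

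For the ``moreover'' part (weak well-posedness) I would run the coupling argument underlying the classical proof of Yamada--Watanabe. Let $(\widetilde X,\widetilde W)$ on $(\widetilde\Omega,\widetilde{\mathbb P})$ and $(\bar X,\bar W)$ on $(\bar\Omega,\bar{\mathbb P})$ be two weak solutions of (\ref{eq13}) with $\mathcal{L}_{\widetilde X(0)}=\mathcal{L}_{\bar X(0)}=\mu(0)$, and set $\widetilde\mu(t)=\mathcal{L}_{\widetilde X(t)}$, $\bar\mu(t)=\mathcal{L}_{\bar X(t)}$. Disintegrating the laws of $(\widetilde X,\widetilde W)$ and $(\bar X,\bar W)$ over the Wiener measure in the noise variable and the law $\mu(0)$ in the initial variable (these being independent), one assembles a single stochastic basis carrying a Brownian motion $\widehat W$, an initial value $\widehat\xi$ of law $\mu(0)$, and adapted processes $\widehat X^{1},\widehat X^{2}$ with $\mathcal{L}_{\widehat X^{1}}=\mathcal{L}_{\widetilde X}$, $\mathcal{L}_{\widehat X^{2}}=\mathcal{L}_{\bar X}$, both driven by the same $\widehat W$ and with $\widehat X^{1}(0)=\widehat X^{2}(0)=\widehat\xi$ almost surely. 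The essential observation is that each copy retains its own one-dimensional marginals, $\mathcal{L}_{\widehat X^{1}(t)}=\widetilde\mu(t)$ and $\mathcal{L}_{\widehat X^{2}(t)}=\bar\mu(t)$, so that each is genuinely a solution of the \emph{same} McKean--Vlasov equation (\ref{eq13}) --- not of two different frozen equations. The pathwise uniqueness hypothesis for (\ref{eq13}), assumed for any initial value of law $\mu(0)$, therefore applies to the pair $(\widehat X^{1},\widehat X^{2})$ and forces $\widehat X^{1}=\widehat X^{2}$ a.s.; consequently $\widetilde\mu(t)=\mathcal{L}_{\widehat X^{1}(t)}=\mathcal{L}_{\widehat X^{2}(t)}=\bar\mu(t)$ for every $t$, which is the claimed weak uniqueness, and together with the given weak existence yields weak well-posedness for $\mu(0)$.

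I expect the main obstacle to be precisely this coupling construction: carrying out the disintegration carefully, choosing the enlarged filtration so that $\widehat W$ remains a Brownian motion while both copies stay adapted, and then verifying --- via the L\'evy characterization and the martingale-problem reformulation of ``being a weak solution'' --- that each $\widehat X^{i}$ still satisfies the equation after this measure-theoretic surgery. A recurring subtlety is that one must check the coupled copies keep their marginal laws, so that freezing $\mathcal{L}_{\widehat X^{i}(t)}$ into the coefficients reproduces exactly the McKean--Vlasov drift and diffusion; this is exactly what makes pathwise uniqueness for (\ref{eq13}) itself (rather than merely for a single frozen equation) the correct hypothesis for the uniqueness-in-law conclusion.
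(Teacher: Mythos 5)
Your argument is correct in outline, and it is essentially the standard proof: the paper itself does not prove this lemma but quotes it from \cite[Lemma 2.1]{HW1}, where the argument is exactly your first two steps (freeze the marginal flow $\mu$, apply the classical Yamada--Watanabe theorem to the frozen equation to obtain strong existence and uniqueness in law, then use uniqueness in law to identify the marginal flow of the resulting strong solution with $\mu$, so that it also solves the McKean--Vlasov equation). Your treatment of the ``moreover'' part is likewise the intended one, and the point you flag is indeed the crux: the coupling must preserve the joint law of (initial value, noise, path) so that each transferred copy keeps its own marginal flow and hence remains a solution of the McKean--Vlasov equation itself, after which pathwise uniqueness for (\ref{eq13}) --- applied to two solutions with a priori different marginal flows --- yields the equality of laws.
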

\end{appendix}
%
%
%


\begin{funding}
S. Hu is supported by the Deutsche Forschungsgemeinschaft (DFG, German Research Foundation)-IRTG 2235-Project number 282638148.     W. Liu is supported by NSFC (No. 12171208, 12090011, 12090010, 11831014) and the PAPD of Jiangsu Higher Education Institutions.
\end{funding}

\vspace{5mm}
\noindent\textbf{Acknowledgements} {The authors are grateful to the referees whose constructive comments and suggestions have helped to greatly improve the quality of this paper.

Wei Liu is the corresponding author.}

\end{document}